\numberwithin{equation}{section}
\title{Numerical computations with $\bm{H(\mdiv)}$-finite elements for the Brinkman problem}
\author{Juho Könnö\thanks{Department of Mathematics and Systems Analysis, Aalto University ({\tt jkonno@math.tkk.fi})}
\and Rolf Stenberg\thanks{Department of Mathematics and Systems Analysis, Aalto University}}
\begin{document}

\selectlanguage{english}

\newcommand{\norm}[1]{\|#1\|}
\newcommand{\unorm}[1]{\| #1 \|_{\sigma,t,h}}
\newcommand{\unormloc}[1]{\| #1 \|_{\sigma,t,h,\omega_K}}
\newcommand{\ustarnorm}[1]{\| #1 \|_{\sigma,t,*}}
\newcommand{\pnorm}[1]{|\!|\!| #1 |\!|\!|_{\sigma,t,h}}
\newcommand{\pnormloc}[1]{|\!|\!| #1 |\!|\!|_{\sigma,t,h,\omega_K}}
\newcommand{\unormhalf}[1]{\| #1 \|_{\sigma,t,h/2}}
\newcommand{\pnormhalf}[1]{|\!|\!| #1 |\!|\!|_{\sigma,t,h/2}}
\newcommand{\bb}[1]{\bm{#1}}
\newcommand{\bsigma}{\mathbf{\sigma}}
\newcommand{\beps}{\bm{\varepsilon}}
\newcommand{\bbeta}{\bm{\beta}}
\newcommand{\beeta}{\bm{\eta}}
\newcommand{\pder}[2]{\frac{\partial #1}{\partial #2}}
\newcommand{\mdiv}{\mathrm{div}}
\newcommand{\mrot}{\mathrm{rot}}
\newcommand{\BB}{\mathcal{B}}
\newcommand{\BBp}{\mathcal{B}^*}
\newcommand{\LL}{\mathcal{L}}
\newcommand{\elesum}{\sum_{K \in \mathcal K_h}}
\newcommand{\edgesum}{\sum_{E \in \mathcal E_h}}
\newcommand{\btedgesum}{\sum_{E \in \mathcal E_{h,\bm u_{\bm \tau}}}}
\newcommand{\jump}[1]{[ \! [ #1 ] \! ]}
\newcommand{\eint}[2]{\langle #1, #2 \rangle_E} 
\newcommand{\dkint}[2]{\langle #1, #2 \rangle_{\partial K}} 
\newcommand{\aver}[1]{\{ \! | #1 | \! \}}

\renewcommand{\thefigure}{\arabic{figure}}

\maketitle

\begin{abstract}
The $H(\mdiv)$-conforming approach for the Brinkman equation is studied numerically, verifying the theoretical a priori and a posteriori analysis in~\cite{KSbman,enumath09}. Furthermore, the results are extended to cover a non-constant permeability. A hybridization technique for the problem is presented, complete with a convergence analysis and numerical verification. Finally, the numerical convergence studies are complemented with numerical examples of applications to domain decomposition and adaptive mesh refinement.

\end{abstract}

\section{Introduction}

The Brinkman equation describes the flow of a viscous fluid in a highly porous medium. Mathematically the model is a parameter-dependent combination of the Darcy and Stokes models. For a derivation of and details on the Brinkman equations we refer to~\cite{loi-I,allaire-I,allaire-II,arbogast-etal,Rajagopal07}. Typical applications of the model lie in subsurface flow problems, along with some special applications, such as heat pipes and composite manufacturing~\cite{Goldak07,Griebel10}. The effects of taking the viscosity into account are most pronounced in the presence of large crack or vugs, typical of e.g. real-life oil reservoirs.
The Brinkman model is also used as a coupling layer between a free surface flow and a porous Darcy flow~\cite{EFL09}. Numerical results for the Brinkman flow have been previously presented for the Hsieh-Clough-Tocher element in~\cite{Xu10}, for the Crouzeix-Raviart element in~\cite{BurmanHansbo05}, for the Stokes-based elements with various stabilizations, including interior penalty methods, in~\cite{JSc,dangelo09,peter-mika,Burman08,BurmanHansbo06,BurmanHansbo07}, and for coupling the Stokes and Darcy flows with an SIPG method in~\cite{Kanschat10}. For the $H(\mdiv)$-conforming approximation, numerical results with a subgrid algorithm can be found in~\cite{lazarov10}, and with modified $H(\mdiv)$-elements in~\cite{Xu08,MardalWinther02}.

The structure of the paper is as follows. In Chapter~\ref{sec:model} we briefly recall the mathematical formulation of the model, and introduce the necessary function spaces. Chapter~\ref{sec:fem} carries on to introducing the $H(\mdiv)$-conforming finite element discretization for the problem, along with the Nitsche formulation for assuring conformity and stability in the discrete spaces. We also recall the main results of the a priori and a posteriori analysis carried out in~\cite{KSbman}, along with the postprocessing procedure necessary for the optimal convergence results. The results are extended to cover a non-constant permeability. In Chapter~\ref{sec:hybrid} we introduce a hybridization technique for the parameter dependent problem based on previous hybridization techniques for mixed and DG methods~\cite{egger09,cockburn09,BrezziFortin91}. The practicability of the hybridization and the benefits therein are discussed briefly.

We end the paper with extensive numerical tests in Chapter~\ref{sec:numerics}. We first demonstrate the convergence rates predicted by the theory for both the relative error as well as the a posteriori indicator. Furthermore, the performance of the method is compared with that of a MINI finite element discretization. Next, the importance of the postprocessing method is clarified and convergence of the hybridized method is studied. We also apply the hybridization procedure to domain decomposition. The weak enforcing of the boundary conditions and adaptive refinement techniques are studied in the framework of a flow in a channel with a parameter-dependent boundary condition. The chapter ends with a practical example of the Brinkman flow with actual material parameters from the SPE10 dataset~\cite{spe10} demonstrating the applicability of the estimator to adaptive mesh refinement.

\section{The Brinkman model}\label{sec:model}

Let $\Omega \subset \mathbb{R}^n$, with $n=2,3$, be a domain with a polygonal or polyhedral boundary. We denote by $\bm u$ the velocity field of the fluid and by $p$ the pore pressure. Let $\bm K$ denote the symmetric permeability tensor and $\mu$ and $\tilde{\mu}$ are the dynamic and effective viscosities of the fluid, respectively. With this notation the problem reads~\cite{Rajagopal07,Popov09} 
\begin{align}\label{eq:strong-real}
-\tilde{\mu} \Delta \bm u + \mu \bm K^{-1} \bm u + \nabla p &= \bm f,\quad \mathrm{in}~\Omega, \\
\mdiv~\bm u &= g,\quad \mathrm{in}~\Omega.
\end{align}

To simplify the mathematical analysis we assume the permeability to be of the following diagonal form
\begin{equation}\label{eq:perm_diag}
\bm K^{-1}(\bm x) = \sigma (\bm x)^2 \bm I,\quad \bm x \in \Omega,
\end{equation}
in which $\sigma$ is a strictly positive, piecewise constant function. We furthermore assume both the viscosities to be constant over the whole domain $\Omega$. By scaling the equations with the dynamic viscosity, we arrive at the following scaled problem
\begin{align}\label{eq:strong}
-t^2 \Delta \bm u + \sigma^2 \bm u + \nabla p &= \bm f,\quad \mathrm{in}~\Omega, \\
\mdiv~\bm u &= g,\quad \mathrm{in}~\Omega.
\end{align}
Here the parameter $t$ represents the effective viscosity of the fluid, whereas $\sigma$ reflects the variations in the magnitude of the permeability field. For simplicity, we consider homogenous Dirichlet boundary conditions for the velocity field. For $t > 0$ the boundary conditions are 
\begin{equation}\label{eq:brinkbc}
\bm u = \bm 0.
\end{equation}
For the limiting case $t=0$ we assume the boundary condition
\begin{equation}\label{eq:darcybc}
\bm u \cdotp \bm n = 0.
\end{equation}

For $t > 0$, the equations are formally a Stokes problem. The solution $(\bm u, p)$ is sought in $\bm V \times Q = [H^1_0(\Omega)]^n \times L^2_{0}(\Omega)$. For the case $t=0$ we get the Darcy problem, and accordingly the solution space can be chosen as  $\bm V \times Q = H(\mdiv,\Omega) \times L^2_{0}(\Omega)$ or $\bm V \times Q = [L^2(\Omega)]^n \times [H^1(\Omega) \cap L^2_{0}(\Omega)]$. Here, we focus on the first choice of spaces.

In the following, we denote by $(\cdotp,\cdotp)_D$ the standard $L^2$-inner product over a set $D \subset \mathbb{R}^n $. If $D = \Omega$, the subscript is dropped for convenience. Similarly, $\langle \cdotp,\cdotp \rangle_B$ is the $L^2$-inner product over an $(n-1)$-dimensional subset $B \subset \bar{\Omega}$. We define the following bilinear forms
\begin{align}
a(\bm u,\bm v) &= t^2(\nabla \bm u,\nabla \bm v) + (\sigma^2 \bm u,\bm v),\\ 
b(\bm v,p) &= -(\mdiv~\bm v,p),
\end{align}
and
\begin{equation}
\BB(\bm u,p;\bm v,q) = a(\bm u,\bm v) + b(\bm v,p) + b(\bm u,q).
\end{equation}

The weak formulation of the Brinkman problem then reads: Find $(\bm u,p) \in \bm V \times Q$ such that
\begin{equation}\label{eq:weakprob}
\BB(\bm u,p;\bm v,q) = (\bm f,\bm v) - (g,q),\quad \forall (\bm v,q) \in \bm V \times Q.
\end{equation}

\section{Solution by mixed finite elements}\label{sec:fem}

Let  $\mathcal K_h$ be a shape-regular partition of $\Omega$ into simplices. As usual, the
diameter of an element $K$ is denoted by $h_K$, and the global mesh
size $h$ is defined as $h=\max_{K\in \mathcal K_h}h_K$. We denote by
$\mathcal E_h$ the set of all faces of $\mathcal K_h$. We write $h_E$ 
for the diameter of a face $E$.

We introduce the jump and average of a piecewise smooth scalar
function~$f$ as follows. Let $E = \partial K \cap \partial
K^\prime$ be an interior face shared by two elements $K$ and
$K^\prime$. Then the jump of $f$ over~$E$ is defined by
\begin{equation}
\label{eq:jump} \jump{f}= f|_{K}- f|_{K^\prime},
\end{equation}
and the average as
\begin{equation}
\label{eq:average} \aver{f}= \frac 1 2 (f|_{K} + f|_{K^\prime}).
\end{equation}
For vector valued functions, we define the jumps and averages analogously. Denoting by $\bm n$ the normal vector of a face $E$, we define the tangential component on each face $E$ as
\begin{equation}
\bm u_{\bm \tau} = \bm u - (\bm u \cdotp \bm n)\bm n.
\end{equation}

In addition, we assume that the piecewise constant permeability field agrees with the triangulation $\mathcal K_h$ and that there exist a constant $C \geq 1$ such that 
\begin{equation}
\frac{1}{C} \leq \frac{\sigma^2|_{K}}{\sigma^2|_{K^\prime}} \leq C,\quad \forall K,K^\prime \in \mathcal K_h.
\end{equation}
On each edge $E \in \mathcal E_h$ we define $\bar \sigma^2 = (\sigma^2|_K + \sigma^2|_{K^\prime})/2$. Thus we have $\bar \sigma^2 \sim \sigma^2|_K$ and $\bar \sigma^2 \sim \sigma^2|_{K^\prime}$ for an arbitrary face $E$.

\subsection{The mixed method and the norms}

Mixed finite element discretization of the problem is
based on finite element spaces $\bm V_h\times Q_h\subset
H(\mdiv,\Omega) \times L^2_0(\Omega)$ of piecewise polynomial
functions with respect to~$\mathcal K_h$. We will focus here on the
Raviart-Thomas (RT) and Brezzi-Douglas-Marini (BDM) families of
elements~\cite{BrezziFortin91}. In three dimensions the counterparts are the N\'ed\'elec elements~\cite{Nedelec86}
and the BDDF elements~\cite{bddf87}. That is, for an approximation of
order $k\geq 1$, the flux space~$\bm V_h$ is taken as one of the
following two spaces
\begin{align}
\bm V_h^{RT} &= \{ \bm v \in H(\mdiv,\Omega)~|~\bm v|_K \in [P_{k-1}(K)]^n \oplus \bm x \tilde{P}_{k-1}(K)~\forall K \in \mathcal K_h\},\label{def:vspace} \\
\bm V_h^{BDM} &= \{ \bm v \in H(\mdiv,\Omega)~|~\bm v|_K \in [P_{k}(K)]^n~\forall K \in \mathcal K_h\},\label{def:vspace_bdm}
\end{align}
where $\tilde{P}_{k-1}(K)$ denotes the homogeneous polynomials of degree $k-1$. The pressure is approximated in the same space for both choices of the velocity space, namely
\begin{equation}
Q_h = \{ q \in L^2_0(\Omega)~|~q|_K \in P_{k-1}(K)~\forall K \in \mathcal K_h \}.\label{def:qspace}
\end{equation}
Notice that $\bm V_h^{RT} \subset \bm V_h^{BDM}$. The combination of spaces satisfies the following equilibrium property:
\begin{equation}\label{eq:eqlprop}
\mdiv~\bm V_h \subset Q_h.
\end{equation}

To assure the conformity and stability of the approximation, we use the an SIPG method~\cite{mika-rolf,nitsche-orig}, also known as Nitsche's method, with a suitably chosen stabilization parameter $\alpha > 0$. We define the following mesh-dependent bilinear form
\begin{equation}\label{def:hbilinear}
\BB_h(\bm u, p;\bm v, q) = a_h(\bm u,\bm v) +  b(\bm v,p) + b(\bm u,q),
\end{equation}
in which
\begin{align}\label{def:ahbilinear}
a_h(\bm u,\bm v) &= (\sigma^2 \bm u, \bm v) + t^2 \left[ \elesum (\nabla \bm u,\nabla \bm v)_K \right.\\
 & \left. + \edgesum \{ \frac{\alpha}{h_E} \eint{\jump{\bm u_{\bm \tau}}}{\jump {\bm v_{\bm \tau}}} - \eint{\aver{\pder{\bm u}{n}}}{\jump{\bm v_{\bm \tau}}} - \eint{\aver{\pder{\bm v}{n}}}{\jump{\bm u_{\bm \tau}}}\} \right].\notag
\end{align}
\noindent
Then the discrete problem is to find $\bm u_h \in \bm V_h$ and $p_h \in Q_h$ such that
\begin{equation}\label{eq:feproblem}
\BB_h(\bm u_h,p_h;\bm v,q) = (\bm f,\bm v) - (g,q),\quad \forall (\bm v,q) \in \bm V_h \times Q_h.
\end{equation}

We introduce the following mesh-dependent norms for the problem. For the velocity we use
\begin{equation}\label{def:uhnorm}
\unorm{\bm u}^2 = \elesum \sigma^2 \norm{\bm u}_{0,K}^2 + t^2 \left[ \elesum \norm{\nabla \bm u}_{0,K}^2 + \edgesum \frac{1}{h_E}\norm{\jump{\bm u_{\bm \tau}}}_{0,E}^2 \right],
\end{equation}
and for the pressure
\begin{equation}\label{def:phnorm}
\pnorm{p}^2 = \elesum \frac{h^2_K}{\sigma^2 h^2_K + t^2} \norm{\nabla p}_{0,K}^2 + \edgesum \frac{h_E}{\bar \sigma^2 h^2_E + t^2} \norm{\jump{p}}_{0,E}^2.
\end{equation}
Note that both of the norms are also parameter dependent. To show continuity, we use the somewhat stronger norm
\begin{equation}\label{def:uhstarnorm}
\ustarnorm{\bm u}^2 = \unorm{\bm u}^2 + t^2 \edgesum h_E \norm{\aver{\pder{\bm u}{n}}}_{0,E}^2.
\end{equation}
It is easily shown that the norms~\eqref{def:uhnorm} and~\eqref{def:uhstarnorm} are equivalent in $\bm V_h$. We have the result~\cite{stenberg-95b}, with $C_I > 0$ .
\begin{equation}\label{eq:normalapp}
h_E \norm{\pder{\bm v}{n}}_{0,E}^2 \leq C_I \norm{\nabla \bm v}_{0,K}^2,\quad \forall \bm v \in \bm V_h.
\end{equation}

\subsection{A priori analysis}

For the proofs of the following results, see~\cite{KSbman}. First we note that the method is consistent.
\begin{theorem}\label{th:cons}The exact solution $(\bm u,p) \in \bm V \times Q$ satisfies
\begin{equation}
\BB_h(\bm u,p;\bm v,q) = (\bm f,\bm v) - (g,q),\quad \forall (\bm v,q) \in \bm V_h \times Q_h.
\end{equation}
\end{theorem}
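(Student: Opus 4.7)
My plan is to derive the identity directly from the strong form by testing with arbitrary $(\bm v, q) \in \bm V_h \times Q_h$ and integrating element-by-element; the exact solution is regular enough that all boundary traces are well-defined, and the task reduces to verifying that the extra Nitsche terms in $\BB_h$ vanish when evaluated on $\bm u$. Concretely, I would take the $L^2$-inner product of the momentum equation with $\bm v$ on each $K \in \mathcal K_h$ and sum. Integration by parts gives $-t^2 \elesum (\Delta \bm u, \bm v)_K = t^2 \elesum (\nabla \bm u, \nabla \bm v)_K - t^2 \elesum \dkint{\pder{\bm u}{n}}{\bm v}$, and similarly $(\nabla p, \bm v) = -\elesum (p, \mdiv \bm v)_K + \elesum \dkint{p}{\bm v \cdot \bm n}$. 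Reorganizing the element-boundary sums into face sums via the standard DG identity $\elesum \dkint{f}{g} = \edgesum \bigl(\eint{\aver{f}}{\jump{g}} + \eint{\jump{f}}{\aver{g}}\bigr)$ produces a first version of the variational identity.

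Next I use the regularity of the exact solution to reduce this to $\BB_h$. Since $\bm u \in [H^1_0(\Omega)]^n$, its tangential trace is continuous across interior faces and vanishes on $\partial \Omega$, so $\jump{\bm u_{\bm \tau}} = 0$ on every $E \in \mathcal E_h$. Consequently, the Nitsche penalty $\frac{\alpha}{h_E}\eint{\jump{\bm u_{\bm \tau}}}{\jump{\bm v_{\bm \tau}}}$ and the symmetric term $\eint{\aver{\pder{\bm v}{n}}}{\jump{\bm u_{\bm \tau}}}$ can be freely inserted into the identity without altering its value. For the pressure, $p \in L^2_0(\Omega)$ together with $\nabla p = \bm f + t^2 \Delta \bm u - \sigma^2 \bm u \in [L^2(\Omega)]^n$ gives $p \in H^1(\Omega)$, hence $\jump{p} = 0$ on interior faces; combined with $\jump{\bm v \cdot \bm n} = 0$ (from $\bm v \in H(\mdiv, \Omega)$) and the strongly enforced $\bm v \cdot \bm n = 0$ on $\partial \Omega$, the pressure boundary contributions collapse exactly to $b(\bm v, p)$. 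Likewise, the regularity $\Delta \bm u \in [L^2(\Omega)]^n$ implies $\jump{\pder{\bm u}{n}} = 0$ on interior faces, so only the averaged consistency term $-\edgesum \eint{\aver{\pder{\bm u}{n}}}{\jump{\bm v_{\bm \tau}}}$ survives.

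The main obstacle, though a mild one, is justifying these piecewise trace identities, in particular the single-valuedness of $\pder{\bm u}{n}$ across interior faces; this follows from the strong form of the equation together with standard trace theorems for $H(\mdiv)$-type spaces applied component-wise. Finally, testing $\mdiv \bm u = g$ with $q \in Q_h$ yields $b(\bm u, q) = -(g, q)$, and adding the two identities produces exactly $\BB_h(\bm u, p; \bm v, q) = (\bm f, \bm v) - (g, q)$ as claimed.
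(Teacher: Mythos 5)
Your argument is correct and is the standard elementwise integration-by-parts consistency proof for Nitsche-type formulations: test the strong form, convert element-boundary sums to face sums, and use the continuity of $\bm u_{\bm \tau}$, $p$, $\pder{\bm u}{n}$ and $\bm v \cdotp \bm n$ to make the residual face terms either vanish or coincide with the Nitsche terms in $a_h$; the paper itself gives no proof of this theorem but defers to \cite{KSbman}, where essentially this argument is used. The only point worth flagging is that the $L^2$-pairings of the traces of $\pder{\bm u}{n}$ and $p$ require slightly more regularity than $(\bm u,p)\in \bm V\times Q$ literally provides, which you correctly acknowledge as an implicit assumption rather than glossing over.
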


In addition, the bilinear form $a_h(\cdotp,\cdotp)$ is coercive in $\bm V_h$ in the mesh-dependent norm~\eqref{def:uhnorm}.
\begin{lemma}
Let $C_I$ be the constant from the inequality~\eqref{eq:normalapp}. For $\alpha > C_I/4$ there exists a positive constant $C$ such that
\begin{equation}\label{res:a-stab}
a_h(\bm v,\bm v) \geq C \unorm{\bm v}^2,\quad \forall \bm v \in \bm V_h.
\end{equation}
\end{lemma}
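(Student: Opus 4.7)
My plan is the standard interior penalty coercivity argument, specialized to the fact that $H(\mdiv)$-conformity already kills the normal jump so that only the tangential jump and the average of the normal derivative enter. Testing $a_h$ with $\bm v \in \bm V_h$ on both sides, the two consistency terms collapse into a single symmetric one and I get
\begin{equation*}
a_h(\bm v,\bm v) = \norm{\sigma \bm v}_0^2 + t^2\elesum\norm{\nabla \bm v}_{0,K}^2 + t^2\alpha\edgesum\frac{1}{h_E}\norm{\jump{\bm v_{\bm\tau}}}_{0,E}^2 \; - \; 2t^2\,T(\bm v),
\end{equation*}
with
\begin{equation*}
T(\bm v) = \edgesum \eint{\aver{\pder{\bm v}{n}}}{\jump{\bm v_{\bm\tau}}}.
\end{equation*}
The first three contributions assemble exactly to $\unorm{\bm v}^2$, so the whole task is to absorb $2t^2|T(\bm v)|$ into the gradient and penalty pieces.

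Next I would apply Cauchy--Schwarz edge by edge, pairing $h_E^{1/2}\norm{\aver{\partial\bm v/\partial n}}_{0,E}$ against $h_E^{-1/2}\norm{\jump{\bm v_{\bm\tau}}}_{0,E}$, and use Young's inequality with a free weight $\varepsilon > 0$:
\begin{equation*}
2|T(\bm v)| \leq \varepsilon \edgesum h_E\norm{\aver{\pder{\bm v}{n}}}_{0,E}^2 + \frac{1}{\varepsilon}\edgesum\frac{1}{h_E}\norm{\jump{\bm v_{\bm\tau}}}_{0,E}^2.
\end{equation*}
The normal-derivative average is then handled by convexity, $\norm{\aver{f}}_{0,E}^2 \leq \tfrac12(\norm{f|_K}_{0,E}^2+\norm{f|_{K'}}_{0,E}^2)$, combined with the discrete trace bound \eqref{eq:normalapp} applied on each neighbouring simplex; the finite number of faces per element is absorbed into $C_I$, giving a clean summed estimate
\begin{equation*}
\edgesum h_E\norm{\aver{\pder{\bm v}{n}}}_{0,E}^2 \leq C_I \elesum \norm{\nabla \bm v}_{0,K}^2.
\end{equation*}

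Feeding this back yields
\begin{equation*}
a_h(\bm v,\bm v) \geq \norm{\sigma\bm v}_0^2 + t^2(1-\varepsilon C_I)\elesum\norm{\nabla \bm v}_{0,K}^2 + t^2\Bigl(\alpha - \frac{1}{\varepsilon}\Bigr)\edgesum\frac{1}{h_E}\norm{\jump{\bm v_{\bm\tau}}}_{0,E}^2,
\end{equation*}
and the final step is to pick $\varepsilon$ making both bracketed coefficients strictly positive. The threshold on $\alpha$ stated in the lemma is exactly what guarantees that the admissible window for $\varepsilon$ is nonempty; taking any such $\varepsilon$ produces a $C > 0$ with $a_h(\bm v,\bm v) \geq C \unorm{\bm v}^2$, where the $\sigma$-contribution survives untouched because it was never needed in the absorption. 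The only point requiring care is the tight optimization of the Young parameter to recover the sharp $C_I/4$ constant quoted; everything else is routine bookkeeping, and no special properties of the $\bm V_h^{RT}$/$\bm V_h^{BDM}$ spaces are used beyond the discrete trace inequality \eqref{eq:normalapp} that they satisfy by construction.
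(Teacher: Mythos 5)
The paper itself gives no proof of this lemma (it defers to \cite{KSbman}), but your argument is the standard interior--penalty coercivity proof and is, in structure, what that reference does: expand $a_h(\bm v,\bm v)$, observe that only the tangential jump and the average normal derivative appear, and absorb the cross term $2t^2T(\bm v)$ into the gradient and penalty parts via Cauchy--Schwarz, Young's inequality and the discrete trace bound \eqref{eq:normalapp}. Two minor slips aside --- the first three terms equal $\unorm{\bm v}^2$ only when $\alpha=1$ (otherwise they dominate $\min(1,\alpha)\unorm{\bm v}^2$, which is all you need), and on boundary faces $\aver{\cdot}$ and $\jump{\cdot}$ are one-sided so the constants there are tracked differently --- your chain of estimates does establish coercivity for $\alpha$ sufficiently large.

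The genuine gap is the threshold. With your displayed bounds the two coefficients are $1-\varepsilon C_I$ and $\alpha-1/\varepsilon$, so the admissible window is $1/\alpha<\varepsilon<1/C_I$, which is nonempty if and only if $\alpha>C_I$ --- not $\alpha>C_I/4$. Since $\varepsilon$ is already a free parameter, no ``tight optimization of the Young parameter'' can improve this within your setup; the factor $4$ must come from the structure of the average and from a sharper use of \eqref{eq:normalapp}, not from Young's inequality. Concretely, you should keep the factor $\tfrac12$ in $\aver{\pder{\bm v}{n}}$ (so that $h_E\norm{\aver{\pder{\bm v}{n}}}_{0,E}^2\leq\tfrac12 h_E(\norm{\pder{\bm v}{n}|_K}_{0,E}^2+\norm{\pder{\bm v}{n}|_{K'}}_{0,E}^2)$) and apply \eqref{eq:normalapp} in a form where the sum over all faces of a single element is controlled by $C_I\norm{\nabla\bm v}_{0,K}^2$; your step ``the finite number of faces per element is absorbed into $C_I$'' silently replaces $C_I$ by a multiple of itself, after which the advertised threshold is no longer expressed in terms of the constant of \eqref{eq:normalapp}. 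If all you need is the existence of \emph{some} threshold $\alpha_0$ depending on $C_I$, your proof is fine and suffices for everything downstream; to obtain the specific constant $C_I/4$ claimed in the lemma you must redo the absorption step with these sharper bookkeeping choices.
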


With a trivial modification of the proof presented in~\cite{KSbman}, we have the discrete Brezzi-Babuska stability condition.
\begin{lemma}\label{lm:bbcond}
There exists a positive constant $C$ independent of the parameters $t$ and $\sigma$ such that
\begin{equation}\label{eq:bbcond}
\sup_{\bm v \in \bm V_h} \frac{b(\bm v,q)}{\unorm{\bm v}} \geq C \pnorm{q},\quad \forall q \in Q_h.
\end{equation}
\end{lemma}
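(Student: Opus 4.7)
The plan is to construct, for any given $q \in Q_h$, a test function $\bm v \in \bm V_h$ realising the supremum in the sense $b(\bm v,q) \gtrsim \pnorm{q}^2$ together with $\unorm{\bm v} \lesssim \pnorm{q}$, with constants independent of $\sigma$ and $t$. The starting point is element-wise integration by parts in $b(\bm v,q)$, which, using the normal continuity of $\bm V_h$ and the discontinuity of $Q_h$, gives
\begin{equation*}
b(\bm v,q) = \elesum (\bm v,\nabla q)_K - \edgesum \eint{\bm v \cdot \bm n}{\jump{q}}.
\end{equation*}
The two terms on the right match the two contributions to $\pnorm{q}^2$, which suggests the decomposition $\bm v = \bm v_1 + \bm v_2$ with $\bm v_1$ capturing the face jumps and $\bm v_2$ (needed only when $k \geq 2$) capturing the interior gradients.

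For the edge part I would set $\bm v_1 = \sum_E \bm v_E$, where each $\bm v_E \in \bm V_h$ is the canonical $H(\mdiv)$-shape function supported in the elements adjoining $E$ and prescribed by the single nonzero degree of freedom
\begin{equation*}
\bm v_E \cdot \bm n |_E = -\frac{h_E}{\bar \sigma^2 h_E^2 + t^2}\jump{q}|_E,
\end{equation*}
with all remaining face and interior moments set to zero. A Piola-transform scaling argument yields $\norm{\bm v_E}_{0,K}^2 \lesssim h_E \norm{\bm v_E \cdot \bm n}_{0,E}^2$ and the companion trace bound $\norm{(\bm v_E)_{\bm\tau}}_{0,E} \lesssim \norm{\bm v_E \cdot \bm n}_{0,E}$; combined with the inverse estimate~\eqref{eq:normalapp} and the hypothesis $\bar\sigma^2 \sim \sigma^2|_K$, each of the three ingredients of $\unorm{\bm v_E}^2$ is bounded by a constant times $(h_E/(\bar\sigma^2 h_E^2 + t^2))\norm{\jump q}_{0,E}^2$. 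The face term of $b(\bm v_1,q)$ then reproduces the jump part of $\pnorm{q}^2$ exactly by construction.

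For $k \geq 2$ one further piece is needed (for $k=1$ the pressure is piecewise constant and $\nabla q \equiv 0$): I would take $\bm v_2 = \sum_K \bm v_K$, where each $\bm v_K \in \bm V_h$ is supported in $K$, satisfies $\bm v_K \cdot \bm n = 0$ on $\partial K$, and has interior degrees of freedom fixed by the moment condition
\begin{equation*}
(\bm v_K,\bm\phi)_K = \frac{h_K^2}{\sigma^2 h_K^2 + t^2}(\nabla q,\bm\phi)_K
\end{equation*}
for every $\bm\phi$ in the interior test space (which, for both RT and BDM, contains $\nabla Q_h|_K$). Norm equivalence on this finite-dimensional bubble space, together with a scaled trace inequality to handle the tangential jump on $\partial K$, gives $\unorm{\bm v_K}^2 \lesssim (h_K^2/(\sigma^2 h_K^2+t^2))\norm{\nabla q}_{0,K}^2$.

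The two pieces are non-interacting: $\bm v_1$ has zero interior moments and the interior test space contains $\nabla q|_K$, so $(\bm v_1,\nabla q)_K = 0$, while $\bm v_2 \cdot \bm n$ vanishes on every face. Consequently $b(\bm v_1+\bm v_2,q) = \pnorm{q}^2$ and $\unorm{\bm v_1+\bm v_2} \lesssim \pnorm{q}$, which yields the claim. The delicate point is verifying that the Piola-transform scalings interact correctly with the weighted norms across all parameter regimes: the weight $h_E/(\bar\sigma^2 h_E^2 + t^2)$ is precisely the harmonic-type interpolation between the Darcy limit $1/(\bar\sigma^2 h_E)$ and the Stokes limit $h_E/t^2$, and placing it on the test function is what produces uniformity in $\sigma$ and $t$; the modification relative to~\cite{KSbman} amounts to tracking the piecewise-constant $\sigma$ via $\bar\sigma^2 \sim \sigma^2|_K$ throughout these estimates.
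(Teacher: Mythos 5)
Your construction is correct and is essentially the argument the paper relies on (it defers to the proof in~\cite{KSbman}, which proceeds exactly this way): elementwise integration by parts, a test function built by prescribing the face degrees of freedom to the weighted pressure jumps and the interior moments to the weighted pressure gradients, and Piola/inverse/trace scalings to bound $\unorm{\bm v}$ by $\pnorm{q}$ uniformly in $\sigma$ and $t$. The only cosmetic slip is the citation of~\eqref{eq:normalapp}, which controls the normal derivative on faces and is not what is needed here; the bounds on the $\nabla\bm v$ and tangential-jump terms of $\unorm{\cdot}$ follow instead from the standard inverse inequality $\norm{\nabla \bm v}_{0,K}\leq C h_K^{-1}\norm{\bm v}_{0,K}$ and a scaled discrete trace inequality, both of which you in effect invoke.
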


By the above stability results for $a_h(\cdotp,\cdotp)$ and $b(\cdotp,\cdotp)$, the following full stability result holds, see e.g.~\cite{BrezziFortin91}.

\begin{lemma}\label{lm:fullstab}
There is a positive constant $C$ such that
\begin{equation}
\sup_{(\bm v,q) \in \bm V_h \times Q_h} \frac{\BB_h(\bm r,s;\bm v,q)}{\unorm{\bm v} + \pnorm{q}} \geq C (\unorm{\bm r} + \pnorm{s}), \quad \forall (\bm r,s) \in \bm V_h \times Q_h.
\end{equation}
\end{lemma}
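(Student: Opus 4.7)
The plan is to combine the coercivity of $a_h$ with the discrete Brezzi--Babu\v{s}ka condition using the standard Brezzi stability argument: given $(\bm r, s) \in \bm V_h \times Q_h$, I will construct a single test pair $(\bm v, q) \in \bm V_h \times Q_h$ whose $\unorm{\cdot}+\pnorm{\cdot}$ norm is controlled by $\unorm{\bm r}+\pnorm{s}$ and for which $\BB_h(\bm r, s; \bm v, q)$ dominates $(\unorm{\bm r}+\pnorm{s})^2$ (up to a constant). Dividing then gives the claim.

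First, I would take the natural diagonal pair $(\bm r, -s)$, for which the off-diagonal $b$-terms cancel, leaving
\begin{equation*}
\BB_h(\bm r, s; \bm r, -s) = a_h(\bm r, \bm r) \geq C_1 \unorm{\bm r}^2
\end{equation*}
by the coercivity lemma. To pick up the pressure norm, I would invoke Lemma~\ref{lm:bbcond} to choose $\bm w \in \bm V_h$ satisfying $b(\bm w, s) \geq C_2 \pnorm{s}^2$ and $\unorm{\bm w} \leq \pnorm{s}$, and then test with $(\bm v, q) = (\bm r + \delta \bm w, -s)$ for a small parameter $\delta > 0$ to be fixed. A direct expansion yields
\begin{equation*}
\BB_h(\bm r, s; \bm r + \delta \bm w, -s) = a_h(\bm r, \bm r) + \delta\, a_h(\bm r, \bm w) + \delta\, b(\bm w, s).
\end{equation*}
The cross term $\delta a_h(\bm r, \bm w)$ is bounded below via Cauchy--Schwarz and Young's inequality by $-\tfrac{C_1}{2}\unorm{\bm r}^2 - C\delta^2 \unorm{\bm w}^2$, after which choosing $\delta$ small enough so that $\delta C_2 - C\delta^2$ remains bounded below by a positive multiple of $\delta$ gives a lower bound of the form $C'(\unorm{\bm r}^2 + \pnorm{s}^2)$.

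For the denominator I would use the triangle inequality and $\unorm{\bm w} \leq \pnorm{s}$ to obtain $\unorm{\bm v}+\pnorm{q} \leq \unorm{\bm r}+(1+\delta)\pnorm{s}\leq C''(\unorm{\bm r}+\pnorm{s})$. Combining with $\unorm{\bm r}^2+\pnorm{s}^2 \geq \tfrac12 (\unorm{\bm r}+\pnorm{s})^2$ produces the desired inf-sup inequality.

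The main obstacle is the bound on the cross term $a_h(\bm r, \bm w)$: I need continuity of $a_h$ on $\bm V_h$ in the $\unorm{\cdot}$ norm, whereas its natural continuity is in the stronger norm $\ustarnorm{\cdot}$. This is exactly why the excerpt records both norms and states that they are equivalent on $\bm V_h$, the equivalence being a consequence of the inverse inequality~\eqref{eq:normalapp}; invoking this equivalence is the one nontrivial ingredient that unlocks the otherwise routine Brezzi argument.
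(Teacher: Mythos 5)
Your proposal is correct and is essentially the argument the paper relies on: the paper gives no proof of Lemma~\ref{lm:fullstab} but refers to the standard Brezzi--Fortin stability theory, which is exactly the construction you carry out (test with $(\bm r+\delta\bm w,-s)$, use coercivity of $a_h$, the inf-sup condition for $b$, and Young's inequality on the cross term). You also correctly identify the one discrete-space ingredient needed to close the argument, namely continuity of $a_h$ in $\unorm{\cdot}$ on $\bm V_h$ via the equivalence with $\ustarnorm{\cdot}$ furnished by the inverse inequality~\eqref{eq:normalapp}.
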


In $H(\mdiv)$, a special interpolation operator $\bm R_h: H(\mdiv,\Omega) \bigcap [L^s(\Omega)]^n \rightarrow \bm V_h$ is required, see~\cite{BrezziFortin91}. We denote by $P_h: L^2(\Omega) \rightarrow Q_h$ the $L^2$-projection. The interpolants possess the following properties:
\begin{equation}\label{eq:divinter}
(\mdiv~(\bm v - \bm R_h \bm v),q) = 0,\quad \forall q \in Q_h,
\end{equation}
\begin{equation}\label{eq:intl2prop}
(\mdiv~\bm v,q - P_h q) = 0,\quad \forall \bm v \in \bm V_h,
\end{equation}
and
\begin{equation}\label{eq:comdiaprop}
\mdiv~\bm R_h = P_h \mdiv.
\end{equation}
By stability and consistency we have the following quasioptimal a priori result shown in~\cite{KSbman}.
\begin{theorem}
There is a positive constant $C$ such that
\begin{equation}
\unorm{\bm u - \bm u_h} + \pnorm{P_h p - p_h} \leq C \unorm{\bm u - \bm R_h \bm u}.
\end{equation}
\end{theorem}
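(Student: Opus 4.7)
The plan is to apply the full stability result (Lemma~\ref{lm:fullstab}) to the discrete error and exploit Galerkin orthogonality coming from consistency. I would first introduce the auxiliary discrete errors $\bm e_h := \bm u_h - \bm R_h \bm u \in \bm V_h$ and $\epsilon_h := p_h - P_h p \in Q_h$. By Lemma~\ref{lm:fullstab} there exist test functions $(\bm v,q) \in \bm V_h\times Q_h$ with $\unorm{\bm v} + \pnorm{q} \leq 1$ such that
\begin{equation*}
\unorm{\bm e_h} + \pnorm{\epsilon_h} \leq C\,\BB_h(\bm e_h,\epsilon_h;\bm v,q).
\end{equation*}
Subtracting the consistency identity (Theorem~\ref{th:cons}) from the discrete problem~\eqref{eq:feproblem} gives $\BB_h(\bm u-\bm u_h,p-p_h;\bm v,q)=0$, hence
\begin{equation*}
\BB_h(\bm e_h,\epsilon_h;\bm v,q) = \BB_h(\bm R_h\bm u - \bm u,\, P_h p - p;\bm v,q) = a_h(\bm R_h\bm u - \bm u,\bm v) + b(\bm v, P_h p - p) + b(\bm R_h\bm u - \bm u, q).
\end{equation*}

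The two mixed terms vanish by the interpolation properties: $b(\bm v, P_h p - p) = -(\mdiv\,\bm v,\, P_h p - p)=0$ follows from~\eqref{eq:intl2prop} together with $\mdiv\,\bm V_h \subset Q_h$; and $b(\bm R_h\bm u - \bm u, q) = -(\mdiv(\bm R_h\bm u-\bm u),q)=0$ follows from~\eqref{eq:divinter}. Thus the entire task reduces to bounding $a_h(\bm R_h\bm u - \bm u, \bm v)$ by $C\,\unorm{\bm u - \bm R_h\bm u}\,\unorm{\bm v}$. The bulk contributions $(\sigma^2\cdot,\cdot)$ and $t^2(\nabla\cdot,\nabla\cdot)_K$ are handled by Cauchy--Schwarz directly into $\unorm{\cdot}$. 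The penalty edge term simplifies because $\jump{\bm u_{\bm \tau}}=0$ for $\bm u\in [H^1_0(\Omega)]^n$, so only $\jump{(\bm R_h\bm u)_{\bm \tau}}$ remains, and is absorbed into $\unorm{\bm R_h\bm u - \bm u}$. The consistency-type term $\langle\aver{\partial\bm v/\partial n},\jump{(\bm R_h\bm u-\bm u)_{\bm \tau}}\rangle_E$ is bounded using the discrete trace inequality~\eqref{eq:normalapp} on $\bm v$ (legitimate since $\bm v \in \bm V_h$), leaving $\unorm{\bm v}$ on one side and the jump contribution of $\unorm{\bm R_h\bm u - \bm u}$ on the other.

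The main obstacle is the symmetric counterpart $\langle\aver{\partial(\bm R_h\bm u - \bm u)/\partial n},\jump{\bm v_{\bm \tau}}\rangle_E$: here~\eqref{eq:normalapp} is not directly applicable because $\bm R_h\bm u - \bm u \notin \bm V_h$. By Cauchy--Schwarz the natural upper bound is $\ustarnorm{\bm R_h\bm u - \bm u}\,\unorm{\bm v}$, and while the star-norm and $\unorm{\cdot}$ are equivalent in $\bm V_h$, they are not in general. To land on $\unorm{\bm u - \bm R_h\bm u}$ as stated, I would invoke a standard trace estimate $h_E\|\partial\bm w/\partial n\|_{0,E}^2 \lesssim \|\nabla\bm w\|_{0,K}^2 + h_K^2|\bm w|_{2,K}^2$ applied to $\bm w = \bm u - \bm R_h\bm u$, combined with an inverse estimate on the polynomial part $\bm R_h\bm u$, so that the extra star-norm term is absorbed into the standard approximation quantities bounded by $\unorm{\bm u - \bm R_h\bm u}$.

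Putting the pieces together yields $\unorm{\bm e_h} + \pnorm{\epsilon_h} \leq C\,\unorm{\bm u - \bm R_h\bm u}$, and a final triangle inequality $\unorm{\bm u - \bm u_h} \leq \unorm{\bm u - \bm R_h\bm u} + \unorm{\bm e_h}$ delivers the claim.
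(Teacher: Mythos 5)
Your proof is essentially the paper's own argument (which it defers to \cite{KSbman} with the one-line justification ``by stability and consistency''): apply Lemma~\ref{lm:fullstab} to the discrete error $(\bm u_h-\bm R_h\bm u,\,p_h-P_h p)$, use Galerkin orthogonality from Theorem~\ref{th:cons}, kill both $b$-terms via the commuting-projection properties \eqref{eq:divinter}--\eqref{eq:intl2prop}, and finish with continuity of $a_h$. You have also correctly isolated the only delicate point --- that continuity of $a_h$ naturally yields the stronger quantity $\ustarnorm{\bm u-\bm R_h\bm u}$ rather than $\unorm{\bm u-\bm R_h\bm u}$, since \eqref{eq:normalapp} does not apply to $\bm u-\bm R_h\bm u\notin\bm V_h$ --- and your proposed element-wise trace/inverse-estimate treatment of the interpolation error is the standard way this is resolved and matches the convergence rates claimed in \eqref{eq:apriori}.
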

This contains a superconvergence result for $\pnorm{p_h - P_h p}$, which implies that the pressure solution can be improved by local postprocessing. Given full regularity, we conclude the section with the following a priori estimate.
\begin{theorem}
Assuming $\bm u \in [H^{k+1}(\Omega)]^n$ or $\bm u \in [H^{k}(\Omega)]^n$ for BDM and RT elements of order $k$, respectively, we have
\begin{equation}\label{eq:apriori}
\unorm{\bm u - \bm u_h} + \pnorm{P_h p - p_h} \leq 
\begin{cases}
C (\sigma h + t)h^{k-1} \norm{\bm u}_{k},\quad\quad\quad \mathrm{for~RT},\\
C (\sigma h + t)h^{k} \norm{\bm u}_{k+1},\quad \mathrm{for~BDM}.
\end{cases}
\end{equation}
\end{theorem}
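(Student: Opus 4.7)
The plan is to invoke the quasioptimality theorem stated immediately above and reduce the question to bounding the interpolation error $\unorm{\bm u - \bm R_h \bm u}$. Since the right-hand side of the quasioptimality bound only involves the mesh-dependent norm of $\bm u - \bm R_h \bm u$, the entire proof amounts to inserting standard approximation properties of $\bm R_h$ on each element and each face, and then combining the resulting powers of $h$, $\sigma$ and $t$.

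Concretely, I would split $\unorm{\bm u - \bm R_h \bm u}^2$ according to its definition~\eqref{def:uhnorm} into three contributions: the $\sigma^2$-weighted $L^2$ part, the $t^2$-weighted broken $H^1$ part, and the $t^2$-weighted tangential jump part. For the first two, I would apply the element-wise approximation estimates for $\bm R_h$: on each $K\in\mathcal K_h$,
\begin{align*}
\norm{\bm u - \bm R_h \bm u}_{0,K} &\leq C h_K^{r}\,|\bm u|_{r,K},\\
\norm{\nabla(\bm u - \bm R_h \bm u)}_{0,K} &\leq C h_K^{r-1}\,|\bm u|_{r,K},
\end{align*}
with $r=k$ for RT and $r=k+1$ for BDM. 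For the jump term, because $\bm u\in [H^1_0(\Omega)]^n$ has no jump, I would write $\jump{(\bm u-\bm R_h\bm u)_{\bm\tau}}=\jump{(\bm u-\bm R_h\bm u)_{\bm\tau}}$ and bound it by a standard scaled trace inequality
\[
\tfrac{1}{h_E}\norm{\jump{(\bm u - \bm R_h \bm u)_{\bm \tau}}}_{0,E}^2 \leq C \sum_{K\supset E}\bigl(h_K^{-2}\norm{\bm u-\bm R_h \bm u}_{0,K}^2 + \norm{\nabla(\bm u-\bm R_h \bm u)}_{0,K}^2\bigr),
\]
which after inserting the interpolation estimates yields the same $h_K^{2(r-1)}$ behavior as the broken gradient term.

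Summing over $K$ and $E$ and using the assumption that $\sigma^2$ is piecewise constant aligned with the mesh, I would obtain
\[
\unorm{\bm u - \bm R_h \bm u}^2 \leq C\bigl(\sigma^2 h^{2r} + t^2 h^{2(r-1)}\bigr)\norm{\bm u}_r^2 = C h^{2(r-1)}\bigl(\sigma^2 h^2 + t^2\bigr)\norm{\bm u}_r^2,
\]
and then $\sqrt{\sigma^2 h^2 + t^2}\leq \sigma h + t$ gives the stated bound with $r=k$ for RT and $r=k+1$ for BDM.

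The only real subtlety is the jump term: one must check that the tangential jump of $\bm R_h\bm u$ is controlled by the element-wise approximation error despite $\bm R_h\bm u$ being discontinuous tangentially. This is handled by the trace inequality above and the fact that $\bm u$ itself is conforming, so there is no additional obstacle beyond bookkeeping. Everything else is a direct combination of the quasioptimality theorem with textbook interpolation estimates for the RT and BDM operators.
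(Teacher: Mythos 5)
Your proposal is correct and follows essentially the route the paper intends: the paper defers the proof to~\cite{KSbman}, but it deliberately places the quasioptimality bound $\unorm{\bm u - \bm u_h} + \pnorm{P_h p - p_h} \leq C \unorm{\bm u - \bm R_h \bm u}$ immediately beforehand precisely so that the a priori estimate follows by inserting the standard elementwise and trace-based interpolation estimates for $\bm R_h$, exactly as you do. Your handling of the tangential jump term via the scaled trace inequality and the vanishing jump of the exact solution is the right (and only) nontrivial bookkeeping step.
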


\subsection{Postprocessing method}

We recall the postprocessing method proposed in~\cite{KSbman} based on the ideas of~\cite{LovadinaStenberg06}. Due to the varying permeability parameter $\sigma$, we modify the method accordingly. We seek the postprocessed pressure in an augmented space $Q^*_h \supset Q_h$, defined as
\begin{equation}
Q^*_h = 
\begin{cases}
\{ q \in L^2_{0}(\Omega)~|~q|_K \in P_{k}(K)~\forall K \in \mathcal K_h \},\quad\quad \mathrm{for~RT},\\
\{ q \in L^2_{0}(\Omega)~|~q|_K \in P_{k+1}(K)~\forall K \in \mathcal K_h \},\quad \mathrm{for~BDM}.
\end{cases}
\end{equation}

\noindent
The postprosessing method is: Find $p_h^* \in Q^*_h$ such that
\begin{align}
P_h p^*_h &= p_h \\
(\nabla p^*_h,\nabla q)_K &= (t^2 \Delta \bm u_h  - \sigma^2 \bm u_h + \bm f,\nabla q)_K,\quad \forall q \in (I - P_h)Q^*_h|_K.
\end{align}

In~\cite{KSbman} the analysis of the postprocessing method is performed by treating it as an integral part of the problem by embedding it into the bilinear form. Note, that this is solely for mathematical purposes, in computations the postprocessing is performed after the solution of the original system elementwise. The modified bilinear form now reads
\begin{equation}\label{def:ppbilin}
\BBp_h(\bm u,p^*;\bm v,q^*) = \BB_h(\bm u,p^*;\bm v,q^*) + \elesum \frac{h_K^2}{\sigma^2 h_K^2+t^2} (\nabla p^* + \sigma^2 \bm u - t^2 \Delta \bm u,\nabla (I-P_h) q^*)_K.
\end{equation}
\noindent
The postprocessed problem is then: Find $(\bm u_h,p_h^*) \in \bm V_h \times Q_h^*$ such that for every pair $(\bm v,q^*) \in \bm V_h \times Q_h^*$ it holds
\begin{equation}\label{eq:pp-problem}
\BBp_h(\bm u_h,p^*_h;\bm v,q^*) = \LL_h(\bm f,P_h g;\bm v,q^*),
\end{equation}
\noindent
in which
\begin{equation}\label{def:loadfun}
\LL_h(\bm f,g;\bm v,q^*) = (\bm f,\bm v) - (g,q^*) + \elesum \frac{h_K^2}{\sigma^2 h_K^2 + t^2}(\bm f,\nabla (I-P_h) q^*)_K.
\end{equation}

Now using exactly the same arguments as in~\cite{KSbman} for the case $\sigma \equiv 1$, we can show that the solutions of the postprocessing procedure and the modified problem~\eqref{eq:pp-problem} agree, and we have the following quasioptimal a priori result.
\begin{theorem}\label{th:ppapriori}
For the postprocessed solution $(\bm u_h,p_h^*)$ it holds
\begin{align}
\unorm{\bm u - \bm u_h} &+ \pnorm{p - p_h^*} \leq C \inf_{q^* \in Q_h^*}\Big \{ \unorm{\bm u - R_h\bm u} + \pnorm{p - q^*}
 \\
 &+ (\elesum \frac{h_K^2}{\sigma^2 h_K^2 + t^2}\norm{\nabla q^* + \sigma^2 \bm R_h \bm u - t^2 \Delta R_h \bm u - \bm f}_{0,K}^2)^{1/2}\Big\}.
 \notag
\end{align}
\end{theorem}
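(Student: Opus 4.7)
The plan is to follow the standard mixed-method recipe: first establish an inf-sup stability bound for the augmented bilinear form $\BBp_h$ on $\bm V_h \times Q_h^*$ in the norm $\unorm{\cdot} + \pnorm{\cdot}$, then verify consistency of the exact pair $(\bm u, p)$ for the augmented problem modulo a controllable residual, and finally deduce quasioptimality by a Strang-type argument. Since the modification in~\eqref{def:ppbilin} activates only in the $(I - P_h) Q_h^*$ direction, the stability results already furnished by Lemmas~\ref{lm:bbcond} and~\ref{lm:fullstab} transfer immediately to the coarse component $Q_h \subset Q_h^*$, and the new work is confined to the fine component $(I - P_h) Q_h^*$. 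For the inf-sup step I would split any $r^* \in Q_h^*$ as $r^* = P_h r^* + (I - P_h) r^*$: the coarse part paired with $\bm w \in \bm V_h$ is controlled by Lemma~\ref{lm:fullstab}, while choosing $q^* = (I - P_h) r^*$ in the new term produces the favourable contribution $\elesum \frac{h_K^2}{\sigma^2 h_K^2 + t^2} \norm{\nabla (I - P_h) r^*}_{0,K}^2$, which together with the edge-jump piece of $\pnorm{\cdot}$ (recovered via local trace and inverse estimates on the polynomial $(I - P_h) r^*$) dominates $\pnorm{(I - P_h) r^*}^2$. The cross couplings $\sigma^2 \bm w$ and $t^2 \Delta \bm w$ appearing in the same term are absorbed via Cauchy--Schwarz and Young's inequality, exploiting that the weight $h_K^2/(\sigma^2 h_K^2 + t^2)$ matches the scales in $\unorm{\cdot}$ after an inverse estimate; a suitable convex combination of the two test pairs then delivers the full inf-sup bound.

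For consistency I would invoke the strong form~\eqref{eq:strong}, which gives $\nabla p + \sigma^2 \bm u - t^2 \Delta \bm u = \bm f$ pointwise, so the new term of $\BBp_h(\bm u, p; \bm v, q^*)$ coincides with the corresponding postprocessing contribution in $\LL_h(\bm f, g; \bm v, q^*)$; combined with Theorem~\ref{th:cons} this yields $\BBp_h(\bm u, p; \bm v, q^*) = \LL_h(\bm f, g; \bm v, q^*)$. The discrete right-hand side uses $P_h g$, producing a consistency deficit of the form $((I - P_h) g, (I - P_h) q^*)$ which is of higher order and readily absorbed into the approximation terms on the right. Quasioptimality then follows by applying the inf-sup bound to the discrete error $(\bm R_h \bm u - \bm u_h, P_h q^* - p_h^*)$ for arbitrary $q^* \in Q_h^*$, using~\eqref{eq:comdiaprop} and~\eqref{eq:intl2prop} so that the $\bm R_h$-defect is invisible to the divergence pairing $b(\cdot, \cdot)$, isolating the new postprocessing residual $\nabla q^* + \sigma^2 \bm R_h \bm u - t^2 \Delta \bm R_h \bm u - \bm f$ as the extra term on the right, and closing with a triangle inequality to pass from $\bm R_h \bm u$ back to $\bm u$.

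The main obstacle is the inf-sup control of the fine pressure component: the new postprocessing term is only directly coercive in a weighted $H^1$-seminorm of $(I - P_h) r^*$, so one must simultaneously recover the entire $\pnorm{\cdot}$ on $(I - P_h) r^*$, including its jump contributions, and absorb the cross couplings with $\bm w$ uniformly in both $t$ and $\sigma$. This is the delicate scaling argument already carried out in~\cite{KSbman} for $\sigma \equiv 1$; it extends verbatim once the edge average $\bar{\sigma}^2$ is used in place of the constant, which is precisely why the author remarks that the earlier proof applies with trivial modifications.
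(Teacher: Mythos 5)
Your proposal follows essentially the same route as the paper's proof (which is deferred to \cite{KSbman} and carried over with the $\sigma$-weights): stability of the augmented form $\BBp_h$ via the splitting $q^* = P_h q^* + (I-P_h)q^*$, with the coarse part handled by Lemma~\ref{lm:fullstab} and the fine part by the added elementwise term plus inverse/trace estimates, then consistency from the strong equations and a Strang-type quasioptimality argument against the interpolant $\bm R_h \bm u$. One minor remark: since $\mdiv\,\bm R_h \bm u = P_h \mdiv\,\bm u = P_h g = \mdiv\,\bm u_h$ by \eqref{eq:comdiaprop} and \eqref{eq:eqlprop}, the divergence pairing cancels identically in the error equation, so the $(I-P_h)g$ consistency deficit you propose to absorb never actually arises, consistent with its absence from the stated bound.
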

Assuming full regularity, we have the following optimal a priori result for the postprocessed problem.
\begin{theorem}\label{th:apriori-pp}
Let us assume $(\bm u,p) \in [H^{k+1}(\Omega)]^n \times H^{k+2}(\Omega)$ or $\bm u \in [H^{k}(\Omega)]^n \times \times H^{k+1}(\Omega)$ for BDM and RT elements of order $k$, respectively. Then we have for the solution $(\bm u_h,p_h^*)$ of the postprocessed problem~\eqref{eq:pp-problem}
\begin{equation*}\label{eq:apriori_pp}
\unorm{\bm u - \bm u_h} + \pnorm{p - p_h^*} \leq 
C \{(\sigma h + t)h^{k-1} \norm{\bm u}_{k} + \dfrac{h^{k+1}}{\sigma h + t} \norm{p}_{k+1} \},\quad \mathrm{for~RT},
\end{equation*}
and
\begin{equation*}\label{eq:apriori_pp2}
\unorm{\bm u - \bm u_h} + \pnorm{p - p_h^*} \leq 
C \{(\sigma h + t)h^{k} \norm{\bm u}_{k+1} + \dfrac{h^{k+2}}{\sigma h + t} \norm{p}_{k+2} \},\quad \mathrm{for~BDM}.
\end{equation*}
\end{theorem}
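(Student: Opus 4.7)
The plan is to deduce this bound from the quasioptimal estimate in Theorem~\ref{th:ppapriori} by making a judicious choice of $q^* \in Q_h^*$ and then carrying out three sets of interpolation estimates, exploiting the strong-form PDE to handle the residual-like quantity.

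First, I would fix $q^*$ to be the standard $L^2$-projection (or, equivalently for these purposes, a suitable Lagrange-type interpolant) of $p$ onto $Q_h^*$. With this choice, the three terms appearing under the infimum in Theorem~\ref{th:ppapriori} can be analyzed independently. The first term $\unorm{\bm u - \bm R_h \bm u}$ is estimated by standard component-wise approximation properties of the Raviart--Thomas (resp. BDM) interpolant: the volume contributions give $\sigma h^k\|\bm u\|_k$ and $t h^{k-1}\|\bm u\|_k$ for RT (and one order higher for BDM), while the jump term vanishes since $\bm R_h\bm u\in\bm V_h\subset H(\mdiv,\Omega)$ has continuous normal component and the tangential jump is controlled by the usual trace/scaling arguments. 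This yields the $(\sigma h+t)h^{k-1}\|\bm u\|_k$ contribution. For $\pnorm{p-q^*}$, the definition~\eqref{def:phnorm} together with standard estimates $\|\nabla(p-q^*)\|_{0,K}\lesssim h^k\|p\|_{k+1,K}$ and $\|\jump{p-q^*}\|_{0,E}\lesssim h_E^{k+1/2}\|p\|_{k+1}$ combined with $\frac{h^2}{\sigma^2 h^2+t^2}\le \frac{h^2}{(\sigma h+t)^2}$ delivers exactly the $\frac{h^{k+1}}{\sigma h+t}\|p\|_{k+1}$ term.

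The third and most delicate piece is the residual quantity
\begin{equation*}
\sum_{K}\frac{h_K^2}{\sigma^2 h_K^2+t^2}\|\nabla q^*+\sigma^2\bm R_h\bm u-t^2\Delta\bm R_h\bm u-\bm f\|_{0,K}^2.
\end{equation*}
Here I would use the strong form $-t^2\Delta\bm u+\sigma^2\bm u+\nabla p=\bm f$ (valid under the assumed regularity) to rewrite the integrand as
\begin{equation*}
\nabla(q^*-p)+\sigma^2(\bm R_h\bm u-\bm u)-t^2\Delta(\bm R_h\bm u-\bm u).
\end{equation*}
The first summand yields $\frac{h^{k+1}}{\sigma h+t}\|p\|_{k+1}$ as before. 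The zeroth-order term contributes $\frac{\sigma^2 h}{\sigma h+t}\cdot h^{k}\|\bm u\|_k\le \sigma h^{k}\|\bm u\|_k$ (and analogously one order higher for BDM), which is absorbed into $(\sigma h+t)h^{k-1}\|\bm u\|_k$. For the Laplacian term I would split $\bm R_h\bm u-\bm u = (\bm R_h\bm u-\Pi\bm u) + (\Pi\bm u-\bm u)$ where $\Pi\bm u$ is a nodal $C^0$ interpolant of the same order, apply an inverse inequality to the first (discrete) difference and a direct estimate of $\Delta(\Pi\bm u-\bm u)$ to the second; both produce contributions of the form $t^2 h^{k-1}\|\bm u\|_k$ after multiplication by $\frac{h}{\sigma h+t}$, which again fits into the $(\sigma h+t)h^{k-1}\|\bm u\|_k$ bound.

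The main obstacle I anticipate is the Laplacian of the interpolation error: $\Delta\bm R_h\bm u$ is only piecewise defined and $\bm R_h\bm u$ does not in general interpolate $\bm u$ in $H^2$, so one has to route the estimate through an auxiliary $C^0$-interpolant together with the $H(\mdiv)$-stability of $\bm R_h$ and an inverse inequality, taking care that the weight $\frac{h_K^2}{\sigma^2 h_K^2+t^2}\le \frac{h_K}{\sigma h_K+t}\cdot\frac{h_K}{\sigma h_K+t}$ balances the two powers of $t$ from $t^2\Delta$ so that the resulting factor $\frac{t^2 h}{(\sigma h+t)^2}\le 1$ keeps the bound uniform in the parameters. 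Once all three sets of estimates are collected, summing over $K$ and combining with the quasioptimal bound of Theorem~\ref{th:ppapriori} yields the stated RT estimate, and the BDM case is obtained by the identical argument with $k$ replaced by $k+1$ in the velocity interpolation orders.
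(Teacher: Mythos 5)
Your proposal is correct and follows essentially the same route as the paper, which obtains Theorem~\ref{th:apriori-pp} from the quasioptimal bound of Theorem~\ref{th:ppapriori} (with the detailed argument deferred to~\cite{KSbman}): one inserts an interpolant of $p$ as $q^*$, uses the strong form of~\eqref{eq:strong} to convert the residual term into interpolation errors, and balances the weights $h_K^2/(\sigma^2 h_K^2+t^2)$ against the powers of $\sigma$ and $t$ exactly as you describe. The only cosmetic slip is the remark that the jump term in $\unorm{\bm u-\bm R_h\bm u}$ ``vanishes'' by normal continuity --- it is the tangential jump that appears there and it must be (and, as you note, is) handled by trace and scaling arguments.
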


\subsection{A posteriori estimates}\label{subsec:aposteriori}

In this section we introduce a residual-based a posteriori estimator for the postprocessed solution. It should be noted that the postprocessing is vital for a properly functioning estimator. We divide the estimator into two distinct parts, one defined over the elements and one over the edges of the mesh. The elementwise and edgewise estimators are defined as
\begin{multline}\label{def:elemind}
\eta_K^2 = \frac{h_K^2}{\sigma^2 h_K^2 + t^2} \norm{-t^2 \Delta \bm u_h + \sigma^2 \bm u_h + \nabla p_h^* - \bm f}_{0,K}^2 \\
+ (t^2 + \sigma^2 h_K^2) \norm{g - P_h g}_{0,K}^2
\end{multline}
\begin{equation}\label{def:edgeind}
\eta_E^2 = \frac{t^2}{h_E}\norm{\jump{\bm u_{h,\bm \tau}}}_{0,E}^2 + \frac{h_E}{\bar \sigma^2 h_E^2 + t^2} \norm{\jump{t^2 \pder{\bm u_h}{\bm n}}}_{0,E}^2 + \frac{h_E}{\bar \sigma^2 h_E^2 + t^2}\norm{\jump{p_h^*}}_{0,E}^2.
\end{equation}
The global estimator is
\begin{equation}\label{def:globalin}
\eta = \left ( \elesum \eta_K^2 + \edgesum \eta_E^2 \right )^{1/2}.
\end{equation}
Note that setting $t=0$ gives the estimator of~\cite{LovadinaStenberg06} for the Darcy problem. In the following, we address the reliability and efficiency of the estimator and show the terms of the estimator to be properly matched to one another. The estimator introduced is both an upper and a lower bound for the actual error as shown by the following results, provided that a saturation assumption holds. The arguments presented in~\cite{KSbman} hold step-by-step for the estimators~\eqref{def:elemind} and~\eqref{def:edgeind} in conjunction with the scaled norms~\eqref{def:uhnorm} and~\eqref{def:phnorm}.

\begin{theorem}[Reliability]\label{th:apostrel}
There exists a constant $C > 0$ independent of $h$ and the parameters $t$ and $\sigma$ such that
\begin{equation}
\unorm{\bm u - \bm u_h} + \pnorm{p - p_h^*} \leq C \eta.
\end{equation}
\end{theorem}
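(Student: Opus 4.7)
The plan is to follow the residual-based a posteriori argument of~\cite{KSbman}, adapted to the piecewise-constant $\sigma$. I first invoke the saturation assumption: on one uniform refinement $\mathcal{K}_{h/2}$ the postprocessed discrete solution $(\bm u_{h/2}, p_{h/2}^*)$ satisfies
$$
\unormhalf{\bm u - \bm u_{h/2}} + \pnormhalf{p - p_{h/2}^*} \leq \beta \bigl(\unorm{\bm u - \bm u_h} + \pnorm{p - p_h^*}\bigr)
$$
for some fixed $\beta \in (0,1)$. Combined with the triangle inequality (and the fact that the $h$-norms dominate the $h/2$-norms on functions that already live on $\mathcal{K}_h$), this reduces the bound to controlling the \emph{discrete} error $\unormhalf{\bm u_{h/2} - \bm u_h} + \pnormhalf{p_{h/2}^* - p_h^*}$ by $\eta$.

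Next, I apply the postprocessed analogue of Lemma~\ref{lm:fullstab} on $\bm V_{h/2} \times Q_{h/2}^*$ to obtain
$$
\unormhalf{\bm u_{h/2} - \bm u_h} + \pnormhalf{p_{h/2}^* - p_h^*} \leq C \sup_{(\bm v, q^*)} \frac{\BBp_h(\bm u_{h/2} - \bm u_h, p_{h/2}^* - p_h^*; \bm v, q^*)}{\unormhalf{\bm v} + \pnormhalf{q^*}}.
$$
Using~\eqref{eq:pp-problem} on both meshes, the numerator equals the residual $\LL_h(\bm f, P_h g; \bm v, q^*) - \BBp_h(\bm u_h, p_h^*; \bm v, q^*)$. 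Elementwise integration by parts then exposes precisely the ingredients of $\eta$: on each $K$ the volume residual $-t^2 \Delta \bm u_h + \sigma^2 \bm u_h + \nabla p_h^* - \bm f$ and the divergence residual $g - P_h g$; on each $E$ the jumps $\jump{\bm u_{h,\bm \tau}}$, $\jump{t^2 \pder{\bm u_h}{\bm n}}$ and $\jump{p_h^*}$, matching the terms of~\eqref{def:elemind} and~\eqref{def:edgeind}.

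To close the Cauchy--Schwarz estimate I pair each residual with the correctly weighted test-function norm. For the velocity residuals I replace $\bm v$ by $\bm v - \bm R_h \bm v$ using~\eqref{eq:divinter}, and apply local estimates of the form $\norm{\bm v - \bm R_h \bm v}_{0,K} \leq C h_K \norm{\nabla \bm v}_{0, \omega_K}$ and $\norm{\bm v - \bm R_h \bm v}_{0,E} \leq C h_E^{1/2} \norm{\nabla \bm v}_{0, \omega_K}$; the weights $h_K^2/(\sigma^2 h_K^2 + t^2)$ and $h_E/(\bar\sigma^2 h_E^2 + t^2)$ in the indicators are exactly what convert these bounds into contributions of $\unormhalf{\bm v}$. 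The divergence residual is treated by subtracting $P_h q^*$ via~\eqref{eq:intl2prop}. The hypothesis that $\sigma^2|_K / \sigma^2|_{K'}$ is bounded on adjacent elements is exactly what permits the replacement of local $\sigma^2$ by the edge value $\bar\sigma^2$ when passing terms between volumes and faces.

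The main technical obstacle is parameter robustness: each residual must be paired with precisely the norm factor appearing in $\unormhalf{\cdot}$ or $\pnormhalf{\cdot}$ so that the constant stays independent of $t$ and $\sigma$. The tangential jump term $t^2 \norm{\jump{\bm u_{h,\bm \tau}}}_{0,E}^2 / h_E$ in $\eta_E$ is the most delicate, since it is not produced by the elementwise integration by parts but by the Nitsche consistency contributions $\eint{\aver{\pder{\bm v}{n}}}{\jump{\bm u_{h,\bm \tau}}}$ already present in $a_h$; its bound is obtained via the trace-type inequality~\eqref{eq:normalapp} applied to $\bm v$. Summing over elements and edges and combining with the saturation step yields the claimed reliability bound.
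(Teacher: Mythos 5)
Your proposal is correct and follows essentially the same route as the paper, which itself defers to the step-by-step argument of \cite{KSbman} (in the spirit of \cite{LovadinaStenberg06}): a saturation assumption on the refined mesh $\mathcal K_{h/2}$, the discrete stability of the postprocessed form on the finer spaces, and elementwise integration by parts of the residual with the parameter-weighted interpolation estimates. The only cosmetic slips are that the bilinear form, load functional and interpolant in the comparison step should carry the subscript $h/2$ rather than $h$, and the norm-comparison you invoke is really $\unorm{\cdot}\leq C\unormhalf{\cdot}$ (the finer-mesh norm dominates), which is what lets you pass to the $h/2$-norms before applying saturation.
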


\begin{theorem}[Efficiency]\label{th:aposteff}There exists a constant $C > 0$ independent of $h$, $t$, and $\sigma$ such that 
\begin{align}
\eta^2 \leq C\Big \{&\unorm{\bm u - \bm u_h}^2 + \pnorm{p - p_h^*}^2 
\\
&+ \elesum \Big( \frac{h_K^2}{\sigma^2 h_K^2 + t^2} \norm{\bm f - \bm f_h}_{0,K}^2 + (t^2 + \sigma^2 h_K^2) \norm{g - P_h g}_{0,K}^2 \Big) \Big\}.
\notag
\end{align}
\end{theorem}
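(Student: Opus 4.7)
The plan is to follow the residual-based efficiency paradigm with interior and edge bubble functions, bounding each summand of $\eta_K^2$ and $\eta_E^2$ locally by the corresponding piece of $\unorm{\bm u - \bm u_h}^2 + \pnorm{p - p_h^*}^2$ plus data oscillation. The argument is the one already invoked from~\cite{KSbman}, but the constants must be tracked carefully so that they remain independent of $t$ and $\sigma$.

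Two summands require essentially no work. The term $(t^2 + \sigma^2 h_K^2)\norm{g - P_h g}_{0,K}^2$ appears verbatim on the right-hand side as data oscillation. For the tangential jump $\frac{t^2}{h_E}\norm{\jump{\bm u_{h,\bm \tau}}}_{0,E}^2$, whenever $t > 0$ one has $\bm u \in [H^1_0(\Omega)]^n$, so $\jump{\bm u_{\bm \tau}} = 0$ across interior edges and $\jump{\bm u_{h,\bm \tau}} = -\jump{(\bm u - \bm u_h)_{\bm \tau}}$; the bound by $\unorm{\bm u - \bm u_h}^2$ is then immediate from the definition of the norm, and the term is identically zero when $t = 0$.

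For the interior residual $\bm R_K = -t^2 \Delta \bm u_h + \sigma^2 \bm u_h + \nabla p_h^* - \bm f$, I fix a piecewise polynomial approximation $\bm f_h$ of $\bm f$, split $\bm R_K = \tilde{\bm R}_K + (\bm f_h - \bm f)$, and test the polynomial part $\tilde{\bm R}_K$ against $\bm \varphi = b_K \tilde{\bm R}_K \in [H^1_0(K)]^n$ with $b_K$ the standard element bubble. Substituting $\bm f = -t^2 \Delta \bm u + \sigma^2 \bm u + \nabla p$ and integrating by parts on $K$ turns the inner product into $(t^2 \nabla(\bm u_h - \bm u), \nabla \bm \varphi)_K + (\sigma^2(\bm u_h - \bm u), \bm \varphi)_K + (\nabla(p_h^* - p), \bm \varphi)_K$ plus the oscillation. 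The inverse estimate $\norm{\nabla \bm \varphi}_{0,K} \lesssim h_K^{-1}\norm{\bm \varphi}_{0,K}$ and the equivalence $\norm{\bm \varphi}_{0,K} \sim \norm{\tilde{\bm R}_K}_{0,K}$ on the bubble-function space, followed by multiplication by the weight $h_K^2/(\sigma^2 h_K^2 + t^2)$, produce the local bound in $\unorm{\bm u - \bm u_h}^2 + \pnorm{p - p_h^*}^2$ plus the weighted oscillation term.

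The edge jumps $\jump{t^2 \pder{\bm u_h}{n}}$ and $\jump{p_h^*}$ are treated by edge bubbles $b_E$: I extend each jump polynomially into the two elements sharing $E$, multiply by $b_E$, and integrate by parts on both sides of $E$, using that $t^2 \pder{\bm u}{n}$ is continuous as a consequence of the PDE and that the exact pressure is single-valued. The resulting identities couple each jump to the already-controlled element residuals on the two neighbours of $E$ together with terms in $t^2 \nabla(\bm u - \bm u_h)$ and $\nabla(p - p_h^*)$; weighting by $h_E/(\bar \sigma^2 h_E^2 + t^2)$ and using $\bar \sigma^2 \sim \sigma^2|_K \sim \sigma^2|_{K'}$ to move freely between neighbours then gives the bound on $\eta_E^2$. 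The main obstacle throughout is verifying that the weight $h_K^2/(\sigma^2 h_K^2 + t^2)$ interpolates correctly between the Darcy regime $\sigma h_K \gg t$, where it scales like $\sigma^{-2}$, and the Stokes regime $t \gg \sigma h_K$, where it scales like $h_K^2/t^2$, so that the bubble-function estimates remain robust in both limits simultaneously.
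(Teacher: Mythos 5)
Your bubble-function argument with careful tracking of the parameter-dependent weights is correct and is exactly the route the paper intends: the paper gives no proof of Theorem~\ref{th:aposteff} itself, stating only that the efficiency argument of~\cite{KSbman} carries over step-by-step to the scaled estimators~\eqref{def:elemind}--\eqref{def:edgeind} and norms~\eqref{def:uhnorm}--\eqref{def:phnorm}, and that argument is the standard Verf\"urth-type local lower bound you describe, with the weight checks $t^2/\sqrt{\sigma^2h_K^2+t^2}\leq t$ and $\sigma^2 h_K/\sqrt{\sigma^2h_K^2+t^2}\leq\sigma$ being the only genuinely new content. The one simplification you overlook is that the pressure-jump part of $\eta_E$ needs no edge bubble at all: since $\jump{p}=0$, the term $\frac{h_E}{\bar\sigma^2h_E^2+t^2}\norm{\jump{p_h^*}}_{0,E}^2$ is verbatim a summand of $\pnorm{p-p_h^*}^2$, exactly as you already argue for the tangential-jump term.
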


Thus for the displacement $\bm u_h$ and the postprocessed pressure $p_h^*$ we have by Theorems~\ref{th:apostrel} and~\ref{th:aposteff} a reliable and efficient indicator  for an elementwise constant permeability parameter $\sigma$ and all values of the effective viscosity parameter $t$. In addition, we have the localized lower bound
\begin{align}
\eta_K^2 + \eta_E^2 \leq C\Big \{&\unormloc{\bm u - \bm u_h}^2 + \pnormloc{p - p_h^*}^2 
\\
&+ \sum_{K \in \omega_K} \Big( \frac{h_K^2}{\sigma^2 h_K^2 + t^2} \norm{\bm f - \bm f_h}_{0,K}^2 + (t^2 + \sigma^2 h_K^2) \norm{g - P_h g}_{0,K}^2 \Big) \Big\},
\notag
\end{align}
in which $\omega_K \subset \Omega $ is the patch of elements surrounding an element $K$, and the subscripted norms above are evaluated only over the elements in $\omega_K$. Thus we have a strong indication of the applicability of the estimator to adaptive refinement.

\section{Hybridization}\label{sec:hybrid}

A well-known method for dealing with the indefinite system resulting from the Darcy equation is the hybridization technique introduced in~\cite{veubeke65,BrezziFortin91}. The idea is to enforce the tangential continuity via Lagrange multipliers chosen suitably and relaxing the continuity requirement on the finite element space. Thus, we drop the normal continuity requirement in the spaces $\bm V_h^{BDM}$ and $\bm V_h^{RT}$ and denote these discontinuous counterparts by $\tilde{\bm V_h}$. In addition, we introduce the corresponding multiplier spaces
\begin{align}
\Lambda_h^{BDM} &= \{ \lambda \in [L^2(\mathcal E_h)]^{n-1}~|~\lambda \in P_k(E),~E \in \mathcal E_h,~\lambda|_E = 0,~E \subset \partial \Omega \},\\
\Lambda_h^{RT} &= \{ \lambda \in [L^2(\mathcal E_h)]^{n-1}~|~\lambda \in P_{k-1}(E),~E \in \mathcal E_h,~\lambda|_E = 0,~E \subset \partial \Omega \},
\end{align}
in which $\mathcal E_h$ denotes the collection of all faces of the mesh. It can be easily shown, that the normal continuity of a discrete flux $\bm u_h \in \tilde{\bm V_h}$ is equivalent to the requirement
\begin{equation}\label{eq:normalcont}
\elesum \dkint{\bm u_h \cdotp \bm n}{\mu} = 0,\quad \forall \mu \in \Lambda_h.
\end{equation}
Accordingly, the original finite element problem~\eqref{eq:feproblem} can be hybridized in the following form: Find $(\bm u_h,p_h,\lambda_h) \in \tilde{\bm V_h} \times Q_h \times \Lambda_h$ such that
\begin{align}
\BB_h(\bm u_h,p_h;\bm v,q) + \elesum \dkint{\bm v \cdotp \bm n}{\lambda_h} &= (\bm f,\bm v) + (g,q),\\
\elesum \dkint{\bm u_h \cdotp \bm n}{\mu} &= 0
\end{align}
for all $(\bm v,q,\mu) \in \tilde{\bm V_h} \times Q_h \times \Lambda_h$. Due to~\eqref{eq:normalcont}, the solution $(\bm u_h,p_h)$ of the hybridized system coincides with that of the original system. Thus, we need not modify the postprocessing procedure even if we drop the continuity requirement from the velocity space.

\subsection{Hybridization of the Nitsche term}\label{sec:nitsche-hyb}

However, now the matrix block corresponding to the bilinear form $\BB_h(\bm u_h,p_h;\bm v,q)$ is a block diagonal system only for the special case $t=0$, and for a non-zero effective viscosity we cannot eliminate the variables locally. To alleviate this problem we introduce a second hybrid variable for the Nitsche term of the velocity, see e.g.~\cite{cockburn09}. Recall, that the velocity-velocity term of the bilinear form $\BB_h(\bm u_h,p_h;\bm v,q)$ is
\begin{align}
a_h(\bm u,\bm v) &= (\sigma^2 \bm u, \bm v) + t^2 \left[ \elesum (\nabla \bm u,\nabla \bm v)_K \right. \\
 &+ \left. \edgesum \{ \frac{\alpha}{h_E} \eint{\jump{\bm u_{\bm \tau}}}{\jump {\bm v_{\bm \tau}}} - \eint{\aver{\pder{\bm u}{n}}}{\jump{\bm v_{\bm \tau}}} - \eint{\aver{\pder{\bm v}{n}}}{\jump{\bm u_{\bm \tau}}}\}\right].\notag
\end{align}
To this end, we follow~\cite{egger09}, and formally introduce the mean value of $\bm u_{\bm \tau}$ as a new variable, $\bm m = \frac 1 2 (\bm u_{1,\bm \tau}  + \bm u_{2, \bm \tau})$. 
Thus we can write the tangential jump as
\begin{equation}\label{eq:lagrangedef}
\jump{\bm u_{\bm \tau}} = 2(\bm u_{1,\bm \tau} - \bm m) = -2(\bm u_{2, \bm \tau} - \bm m).
\end{equation}
Now using the new hybrid variables the bilinear form $a_h(\bm u,\bm v)$ can be rewritten as
\begin{align}\label{def:ah_hybrid}
a_h(\bm u,\bm m;\bm v,\bm r) &= (\sigma^2 \bm u, \bm v) + t^2 \elesum \{ (\nabla \bm u,\nabla \bm v)_K  + \frac{2 \alpha}{h_E} \dkint{\bm u_{\bm \tau} - \bm m}{\bm v_{\bm \tau} - \bm r}\notag\\
 & \quad -  \dkint{\pder{\bm u}{n}}{\bm v_{\bm \tau} - \bm r} - \dkint{\pder{\bm v}{n}}{\bm u_{\bm \tau} - \bm m} \}. \notag
\end{align}
Here, the hybrid variable $\bm m$ belongs to a space $\mathcal M_h \subset [L^2(\mathcal E_h)]^{n}$, the choice of which will be discussed subsequently. In addition, we introduce a slightly modified version of the norm~\eqref{def:uhnorm} to encompass both the velocity and the hybrid variable:

\begin{equation}\label{def:uhnorm_hybrid}
\unorm{(\bm u,\bm m)}^2 = \elesum \sigma^2 \norm{\bm u}_{0,K}^2 + t^2 \left[ \elesum \norm{\nabla \bm u}_{0,K}^2 + \edgesum \frac{1}{h_E}\norm{\bm u_{\bm \tau} - \bm m}_{0,E}^2 \right].
\end{equation}

Since for the exact solution the jumps disappear, the bilinear form is consistent. Using exactly the same arguments as those presented in~\cite{egger09} for~\eqref{res:a-stab}, we have
\begin{lemma}
The hybridized bilinear form $a_h(\cdotp,\cdotp;\cdotp,\cdotp)$ is coercive in the discrete spaces $\bm V_h \times \mathcal M_h$, that is there exists a positive constant $C$ such that
\begin{equation}\label{res:a-stab-hyb}
a_h(\bm v,\bm m;\bm v,\bm m) \geq C \unorm{(\bm v,\bm m)}^2,\quad \forall (\bm v,\bm m) \in \bm V_h \times \mathcal M_h.
\end{equation}
\end{lemma}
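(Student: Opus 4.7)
The plan is to test the bilinear form with coinciding arguments, expand it, and control the (symmetric) consistency term by Cauchy–Schwarz plus Young's inequality in conjunction with the discrete trace inequality~\eqref{eq:normalapp}. Setting $\bm u=\bm v$ and $\bm r=\bm m$ gives
\begin{align*}
a_h(\bm v,\bm m;\bm v,\bm m) &= \norm{\sigma \bm v}_{0}^2 + t^2 \elesum \norm{\nabla \bm v}_{0,K}^2 + 2\alpha t^2 \elesum \sum_{E \subset \partial K} \frac{1}{h_E}\norm{\bm v_{\bm \tau} - \bm m}_{0,E}^2 \\
&\quad - 2 t^2 \elesum \dkint{\pder{\bm v}{\bm n}}{\bm v_{\bm \tau} - \bm m}.
\end{align*}
The first three terms already reproduce $\unorm{(\bm v,\bm m)}^2$ up to the element/edge bookkeeping, so the task reduces to absorbing the last cross term into them.

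Applying Cauchy–Schwarz edgewise and Young's inequality with a free parameter $\delta > 0$ yields
\[
2 \left| \dkint{\pder{\bm v}{\bm n}}{\bm v_{\bm \tau} - \bm m} \right| \leq \delta \sum_{E \subset \partial K} h_E \norm{\pder{\bm v}{\bm n}}_{0,E}^2 + \frac{1}{\delta} \sum_{E \subset \partial K} \frac{1}{h_E} \norm{\bm v_{\bm \tau} - \bm m}_{0,E}^2.
\]
The normal-derivative sum is then controlled by the discrete trace inequality~\eqref{eq:normalapp}, giving a bound of the form $C_I' \norm{\nabla \bm v}_{0,K}^2$ per element, where $C_I'$ absorbs the fixed number of faces of a simplex.

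Substituting back leaves a lower bound of the shape
\[
a_h(\bm v,\bm m;\bm v,\bm m) \geq \norm{\sigma \bm v}_{0}^2 + t^2(1-\delta C_I')\elesum \norm{\nabla \bm v}_{0,K}^2 + t^2\!\left(2\alpha-\tfrac{1}{\delta}\right)\!\elesum \sum_{E \subset \partial K} \frac{1}{h_E}\norm{\bm v_{\bm \tau} - \bm m}_{0,E}^2.
\]
Choosing $\delta$ in the open interval $(1/(2\alpha),\,1/C_I')$, which is nonempty precisely when $\alpha$ exceeds an explicit threshold of order $C_I'$, makes both bracketed coefficients strictly positive, and the claimed coercivity follows.

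No genuine obstacle is anticipated: the argument is a routine SIPG-style coercivity proof transplanted to the hybrid setting, exactly as indicated in~\cite{egger09}. The one bookkeeping point worth flagging is that the stabilization sum here runs over the element boundaries $\partial K$ rather than over $\mathcal E_h$, so each interior face is effectively counted twice; this harmless rescaling is the reason the admissible threshold on $\alpha$ here differs from, but is directly comparable to, the $\alpha > C_I/4$ condition appearing in the non-hybridized lemma.
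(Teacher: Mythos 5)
Your argument is correct and is precisely the standard SIPG coercivity proof (Cauchy--Schwarz, Young's inequality with a free parameter, and the discrete trace inequality~\eqref{eq:normalapp}) that the paper invokes by reference to~\cite{egger09} rather than writing out. The bookkeeping remark about the element-boundary versus face summation, and the resulting shift in the admissible threshold for $\alpha$ relative to the $\alpha > C_I/4$ condition of the non-hybridized lemma, is exactly the right point to flag and does not affect the conclusion.
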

Note, that the stability holds for any choice of the space $\mathcal M_h$. For complicated problems, this gives great flexibility. Thus, due to consistency and stability, we get optimal convergence rate as long as the space $\mathcal M_h$ is rich enough. Here we choose
\begin{equation}
\mathcal M_h = \{ \bm m \in [L^2(\mathcal E_h)]^n~|~Q(E) \bm m|_E \in [P_{k}(E)]^{n-1},\quad \forall E \in \mathcal E_h \},
\end{equation}
in which $Q(E)$ is the coordinate transformation matrix from the global $n$-dimensional coordinate system to the local $(n-1)$-dimensional coordinate system on the face $E$. Let $\bm P_h: [L^2(E)]^{n-1} \rightarrow \mathcal M_h$ be the $L^2$ projection on the faces. We then get the following interpolation estimate.
\begin{lemma}
Let $\bm u$ be such that $\bm u|_K \in [H^{s+1}(K)]^n$ for $\frac 1 2 < s \leq k$. Then it holds
\begin{equation}
\unorm{(\bm u - \bm R_h \bm u,\bm u_{\bm \tau} - \bm P_h \bm u_{\bm \tau})} \leq C (\sigma h + t) h^s \norm{\bm u}_{s+1}.
\end{equation}
\end{lemma}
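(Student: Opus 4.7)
The plan is to bound each of the three pieces making up $\unorm{\cdot}^2$ separately. The element-interior contributions $\sigma^2\norm{\bm u - \bm R_h \bm u}_{0,K}^2$ and $t^2\norm{\nabla(\bm u - \bm R_h \bm u)}_{0,K}^2$ are handled directly by the standard approximation estimates for the BDM/RT interpolant $\bm R_h$; under the regularity hypothesis they contribute $\sigma^2 h_K^{2s+2}\norm{\bm u}_{s+1,K}^2$ and $t^2 h_K^{2s}\norm{\bm u}_{s+1,K}^2$ per element, and after summation these are bounded by $C(\sigma h + t)^2 h^{2s}\norm{\bm u}_{s+1}^2$.

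The main work is in the edge piece $t^2 h_E^{-1}\norm{(\bm u - \bm R_h \bm u)_{\bm \tau} - (\bm u_{\bm \tau} - \bm P_h \bm u_{\bm \tau})}_{0,E}^2$. Since $(\bm u - \bm R_h \bm u)_{\bm \tau} = \bm u_{\bm \tau} - (\bm R_h \bm u)_{\bm \tau}$, the quantity inside the $L^2(E)$-norm telescopes to
\[
\bm P_h \bm u_{\bm \tau} - (\bm R_h \bm u)_{\bm \tau} = (\bm P_h \bm u_{\bm \tau} - \bm u_{\bm \tau}) + (\bm u_{\bm \tau} - (\bm R_h \bm u)_{\bm \tau}),
\]
and I would estimate the two summands independently in $L^2(E)$. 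For the second, a scaled trace inequality followed by the interpolation estimates for $\bm R_h$ on $K$ yields $\norm{\bm u_{\bm \tau} - (\bm R_h \bm u)_{\bm \tau}}_{0,E} \leq C h_K^{s+1/2}\norm{\bm u}_{s+1,K}$. For the first, since $\bm P_h$ is the $L^2$-projection onto polynomials of degree $k$ on $E$ and $s \leq k$, a best-approximation argument together with the trace theorem on $K$ (meaningful because $s > 1/2$) yields $\norm{\bm u_{\bm \tau} - \bm P_h \bm u_{\bm \tau}}_{0,E} \leq C h_E^{s+1/2}\norm{\bm u}_{s+1,K}$. Multiplying by $t^2/h_E$, squaring, and summing over the faces produces a contribution bounded by $C t^2 h^{2s}\norm{\bm u}_{s+1}^2$.

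Adding all three contributions gives $\unorm{\cdot}^2 \leq C(\sigma h + t)^2 h^{2s}\norm{\bm u}_{s+1}^2$, and taking the square root delivers the stated bound. The main obstacle is the edge term: one has to make sure the $h_E^{-1}$ weight in the norm is exactly balanced by the extra $h^{1/2}$ gained from each scaled trace inequality, so that the final power is $h^s$ and not a fractional order. The hypothesis $s > 1/2$ is precisely what is needed for the traces of $\bm u$, $\bm R_h \bm u$, and $\bm P_h \bm u_{\bm \tau}$ on the skeleton $\mathcal E_h$ to be controlled by the volumetric $H^{s+1}$-seminorm.
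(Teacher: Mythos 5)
Your proposal is correct and follows essentially the same route as the paper: split the norm into the volumetric and face contributions, apply the standard interpolation estimates for $\bm R_h$ on the elements, and handle the face term by the triangle inequality into the pieces $(\bm u - \bm R_h \bm u)_{\bm \tau}$ and $\bm u_{\bm \tau} - \bm P_h \bm u_{\bm \tau}$, each controlled by scaled trace and best-approximation (Bramble--Hilbert) arguments so that the $h_E^{-1}$ weight is exactly compensated. The bookkeeping of powers matches the paper's computation, so nothing further is needed.
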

\begin{proof}
We proceed by direct computation. Scaling and the Bramble-Hilbert lemma~\cite{braess} yield
\begin{multline*}
\unorm{(\bm u - \bm R_h \bm u,\bm u_{\bm \tau} - \bm P_h \bm u_{\bm \tau})}^2 \leq \elesum \sigma^2 \norm{\bm u - \bm R_h \bm u}_{0,K}^2 + t^2 \left[\elesum \norm{\nabla(\bm u - \bm R_h \bm u)}_{0,K}^2 \right.\\
\left. +\edgesum \left( \frac{1}{h_E} \norm{(\bm u - \bm R_h \bm u)_{\bm \tau}}_{0,E}^2  + \frac{1}{h_E} \norm{(\bm u_{\bm \tau} - \bm P_h \bm u_{\bm \tau})}_{0,E}^2 \right) \right] \\
\leq C \left( \sigma^2 h^{2s+2} \norm{\bm u}_{s+1}^2 + t^2 \elesum \{ h_K^{2s} \norm{\bm u}_{s+1,K}^2 + h_K^{2s} \norm{\bm u_{\bm \tau}}_{s+1/2,K}^2 \} \right).
\end{multline*}
The result is immediate after taking the square root.
\end{proof}

Combining the interpolation results with the consistency and ellipticity properties yields an optimal convergence rate for the velocity.
\begin{theorem}
Assuming sufficient regularity, for the finite element solution $(\bm u_h,\bm m_h)$ of the hybridized system it holds
\begin{equation}
\unorm{(\bm u - \bm u_h,\bm u_{\bm \tau} - \bm m_h))} \leq C (\sigma h + t) h^s \norm{\bm u}_{s+1}.
\end{equation}
\end{theorem}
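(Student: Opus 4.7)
The plan is a Strang-type argument: split the error with the interpolants $\bm R_h$ and $\bm P_h$, reduce to controlling the discrete portion via coercivity plus Galerkin orthogonality, and close with the preceding interpolation lemma.

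First I would write
\[
(\bm u - \bm u_h,\,\bm u_{\bm\tau} - \bm m_h) = (\bm u - \bm R_h\bm u,\,\bm u_{\bm\tau} - \bm P_h \bm u_{\bm\tau}) + (\bm R_h \bm u - \bm u_h,\,\bm P_h \bm u_{\bm\tau} - \bm m_h),
\]
call the second summand $(\bm e_u,\bm e_m)$, and note that the first is already bounded by the preceding lemma. The decisive observation is that $\bm e_u$ sits in the divergence-conforming discrete kernel: the commuting property~\eqref{eq:comdiaprop} gives $\mdiv\bm R_h\bm u = P_h\mdiv\bm u = P_h g$, while the discrete equation forces $\mdiv\bm u_h = P_h g$, so $\mdiv\bm e_u = 0$. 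Moreover, both $\bm R_h\bm u$ and $\bm u_h$ have continuous normal components across interior faces, so in fact $\bm e_u\in\bm V_h\subset\tilde{\bm V}_h$.

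Next I would test the hybridized system with $(\bm e_u,\bm e_m,0,0)$. Because $\mdiv\bm e_u=0$ the $b$-form vanishes, and because the normal trace of $\bm e_u$ is continuous, the Lagrange multiplier term $\sum_K\langle\bm e_u\cdot\bm n,\lambda_h\rangle_{\partial K}$ vanishes as well. Consistency of the hybridized formulation (inherited from Theorem~\ref{th:cons} together with the fact that for the exact solution $\bm u_{\bm\tau}-\bm m$ vanishes on every face) then yields the Galerkin orthogonality
\[
a_h(\bm u - \bm u_h,\,\bm u_{\bm\tau} - \bm m_h;\,\bm e_u,\bm e_m) = 0.
\]
Combining this with coercivity~\eqref{res:a-stab-hyb} applied to $(\bm e_u,\bm e_m)$ gives
\[
C\,\unorm{(\bm e_u,\bm e_m)}^2 \;\leq\; a_h(\bm u - \bm R_h\bm u,\,\bm u_{\bm\tau} - \bm P_h\bm u_{\bm\tau};\,\bm e_u,\bm e_m).
\]
The remaining step is to dualize each term of $a_h$: the mass term, the elementwise $(\nabla\cdot,\nabla\cdot)_K$ term, and the penalty term are handled directly by Cauchy--Schwarz in the hybrid norm; for the two nonsymmetric normal-derivative terms, I would apply the inverse inequality~\eqref{eq:normalapp} to the discrete argument $\bm e_u$ to absorb the $h_E^{-1/2}$ factor, and on the other side use a scaled trace inequality plus Bramble--Hilbert to bound $t^2 h_E\|\aver{\partial(\bm u - \bm R_h\bm u)/\partial n}\|_{0,E}^2$ by $t^2 h^{2s}\|\bm u\|_{s+1}^2$. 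Absorbing $\unorm{(\bm e_u,\bm e_m)}$ on the left and invoking the triangle inequality together with the preceding interpolation lemma yields the claimed estimate.

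The main obstacle is precisely this continuity bound for the two nonsymmetric terms: the interpolation error $\bm u-\bm R_h\bm u$ is not in $\bm V_h$, so~\eqref{eq:normalapp} cannot be applied to it, and one must instead introduce an auxiliary stronger norm containing $t^2 h_E\|\aver{\partial\bm u/\partial n}\|_{0,E}^2$ (in the spirit of $\ustarnorm{\cdot}$) and verify, via a standard trace inequality and Bramble--Hilbert for $\bm u\in H^{s+1}$, that this contribution is also of order $(\sigma h + t)h^s\|\bm u\|_{s+1}$.
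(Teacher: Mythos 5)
Your proposal is correct and follows essentially the same route the paper indicates: the paper's proof is only the one-line remark that consistency, the coercivity \eqref{res:a-stab-hyb}, and the interpolation lemma combine to give the estimate, and your argument is precisely the detailed Strang-type realization of that, including the correct identification that the non-symmetric Nitsche terms require an $\ustarnorm{\cdot}$-type bound on the interpolation error rather than the inverse inequality \eqref{eq:normalapp}. No substantive gap.
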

Thus the hybridization preserves the convergence rates of Theorem~\ref{th:apriori-pp}.

The residual a posteriori estimator of Section~\ref{subsec:aposteriori} can be modified to handle the hybrid variable by changing the edgewise estimator~\eqref{def:edgeind} to
\begin{equation}\label{def:edgeind_mod} 
\eta_E^2 = \frac{t^2}{h_E}\norm{\bm u_{h,\bm \tau} - \bm m_h}_{0,E}^2 + \frac{h_E}{\bar \sigma^2 h_E^2 + t^2} \norm{\jump{t^2 \pder{\bm u_h}{\bm n}}}_{0,E}^2 + \frac{h_E}{\bar \sigma^2 h_E^2 + t^2}\norm{\jump{p_h^*}}_{0,E}^2.
\end{equation}
Following the lines of the proof in~\cite{KSbman} it is easy to prove that also the modified estimator is both sharp and reliable. This will be demonstrated in numerical experiments in Section~\ref{sec:numerics}.

\subsection{Implementation and local condensation}

In practice, it is beneficial to choose the hybrid variable $\bm m$ slightly differently, namely as the weighted average $\bm m = \frac t 2 (\bm u_{1,\bm \tau}  + \bm u_{2, \bm \tau})$. Now the hybridized bilinear form can be written as

\begin{multline}\label{def:ah_hybrid2}
a_h(\bm u,\bm m;\bm v,\bm r) = (\sigma^2 \bm u, \bm v) + \elesum \frac{2\alpha}{h_E} \dkint{\bm m}{\bm r} \\
+ t \elesum \{ \dkint{\pder{\bm u}{n}}{\bm r} + \dkint{\pder{\bm v}{n}}{\bm m} - \frac{2\alpha}{h_E} \dkint{\bm u_{\bm \tau}}{\bm r} - \frac{2\alpha}{h_E} \dkint{\bm v_{\bm \tau}}{\bm m} \} \notag \\
+ t^2 \elesum \{ (\nabla \bm u,\nabla \bm v)_K  + \frac{2 \alpha}{h_E} \dkint{\bm u_{\bm \tau}}{\bm v_{\bm \tau}} - \dkint{\pder{\bm u}{n}}{\bm v_{\bm \tau}} - \dkint{\pder{\bm u}{n}}{\bm u_{\bm \tau}} \}. \notag
\end{multline}
Note, that now we get a $t$-independent part for the hybrid variable and the system remains solvable even in the limit $t \rightarrow 0$. It is clear that all of the results in Section~\ref{sec:nitsche-hyb} hold also for the scaled hybrid variable.

The main motivation for the hybridization procedure is to break all connections in the original saddlepoint system to allow for local elimination of the velocity and pressure variables at the element level. After hydridization the matrix system gets the following form  where $A$ is the matrix corresponding to the bilinear form $a_h(\cdotp,\cdotp)$, $B$ to $b(\cdotp,\cdotp)$, whilst $C$ and $D$ represent the connecting blocks for the hybrid variables for normal continuity and the Nitsche terms, respectively. $M$ is the mass matrix for the Nitsche hybrid variable.

\begin{equation}\label{eq:hybmatrix}
\begin{pmatrix}
A & B^T & C^T & D^T \\
B & 0 & 0 & 0 \\
C & 0 & 0 & 0 \\
D & 0 & 0 & M
\end{pmatrix}.
\end{equation}

Since $A$ and $B$ are now block diagonal matrices, they can be inverted on the element level and we get the following symmetric and positive definite system for the hybrid variables. We denote the by $H$ the following matrix that can be computed elementwise.
\begin{equation}
H := A^{-1}B^T(BA^{-1}B^T)^{-1}BA^{-1} - A^{-1}.
\end{equation}
The matrix $H$ is positive definite and symmetric. Eliminating the velocity and pressure from the system matrix~\eqref{eq:hybmatrix} yields the following system matrix for the hybrid variables $(\lambda,\bm m)$ corresponding to the normal continuity and tangential jumps, respectively.
\begin{equation}\label{eq:schur_system}
\begin{pmatrix}
CHC^T & CHD^T \\
DHC^T & DHD^T + M
\end{pmatrix}.
\end{equation}
Evidently the resulting system is symmetric and positive definite. Note, that whilst  the connecting block $D$ will vanish as $t \rightarrow 0$, the $M$ block does not depend on $t$, thus the whole system remains invertible even in the limit.

\subsection{Application to domain decomposition}

The hybridized formulation is well-suited to solving large problems with the domain decomposition method. The hybrid variables readily form a discretization for the skeleton of the domain decomposition method for any choice of non-overlapping blocks. The local problems are of the Dirichlet type, and the domain decomposition method can be implemented easily using local solvers on the subdomains.

Furthermore, only the skeleton of the domain decomposition mesh can be hybridized using conventional $H(\mdiv)$-elements for the saddle point system in the subdomains. We also have great flexibility in the choice of the hybridized variables, thus allowing one to use a lower number of degrees of freedom on the skeleton when computational resources are limited or alternatively construct a coarser approximation for use as a preconditioner. Most importantly, the mass conservation property is retained over the subdomain boundaries, thus making the method a viable alternative for multiphase flow computations.

\section{Numerical tests}\label{sec:numerics}

In this section, we shall numerically demonstrate the performance of the method. In all of the convergence tests we are mainly interested in the effect of the ratio between the Stokes and Darcy terms $-t^2\Delta \bm u$ and $\sigma^2 \bm u$. Thus for the convergence tests we reduce the problem to a one-parameter family of problems by choosing $\sigma = 1$ on the whole of the domain $\Omega$. This allows us to visualize different behavior of the numerical method in the Stokes and Darcy regimes for varying parameter values.

This choice of parameterization is made for clarity of the numerical analysis, since it allows the Brinkman problem to be easily interpreted as a singular perturbation of the Darcy problem, on which the finite element scheme proposed is based. More exactly, the underlying Darcy flow problem for which the behaviour of the $H(\mdiv)$-conforming discretization is well-understood is kept constant as regards the ratio of the permeability parameter to the mesh size. The effects of adding the viscous term are then studied by gradually increasing value of the $t$ parameter, which is in accordance with the numerical nature of the problem.

Since the main value of the convergence tests lies in showing the performance of the method throughout the parameter range in an idealized case with a known solution, the permeability can be chosen not to vary spatially without losing generality. Thus, the conversion to a one-parameter family can be interpreted also as a constant scaling of the pressure and the loading in the original model~\eqref{eq:strong} as~\cite{JSc}
\begin{align}\label{eq:strong-scaled}
-\left(\frac{t}{\sigma}\right)^2 \Delta \bm u + \bm u + \frac{1}{\sigma^2}\nabla p &= \frac{1}{\sigma^2}\bm f,\quad \mathrm{in}~\Omega, \\
\mdiv~\bm u &= g,\quad \mathrm{in}~\Omega.
\end{align}
Accordingly, in the physically more realistic case of approximately constant viscosity and varying permeability, the converge results can be interpreted by regarding the above scaling of the pressure as a variation in the magnitude of the driving force exerted by a given pressure gradient.

First, we test the optimal convergence properties of the method along with the performance of the a posteriori estimator in two cases with different regularity properties. The proposed method is also compared with the Stokes-type approach using the MINI element. We proceed to demonstrate the effect and the importance of the postprocessing procedure. Next, the convergence of the hybridized method is studied in the framework of domain decomposition methods. Our fourth test is a flow in a channel, demonstrating the performance of Nitsche's method in assigning the boundary conditions and the applicability of the error indicator to adaptive mesh refinement. We end the section with a realistic example employing permeability data from the SPE10 dataset. In all of the test cases the ratio $h/t$ is the ratio $1/(t\sqrt{N})$, in which $N$ is the number of degrees of freedom. For a uniform mesh we have $h/t \approx 1/(t\sqrt{N})$. Note, that this holds only in the two-dimensional case considered in the computations. The data approximation term in the a posteriori estimator is neglected in the computations. In all but the last test case in which real-life values are used, the parameters  as well as the size of the domain are dimensionless.

\subsection{Convergence tests}

For the purpose of testing the convergence rate, we pick a pressure $p$ such that $\nabla p$ is a harmonic function. Thus, $\bm u = \nabla p$ is a solution of the problem for every $t \geq 0$. In polar coordinates $(r,\Theta)$ the pressure is chosen as
\begin{equation}
p(r,\Theta) = r^\beta \sin(\beta \Theta) + C.
\end{equation}
The constant $C$ is chosen such that the pressure will have a zero mean value. Moreover, we have $p \in H^{1+\beta}(\Omega)$, and subsequently $\bm u \in [H^\beta(\Omega)]^n$, see~\cite{Gekeler06}. In the following, we have tested the convergence with a wide range of different parameter values, and the results are plotted with respect to the ratio of the viscosity parameter $t$ to the mesh size $h$. Our aim is to demonstrate numerically, that the change in the nature of the problem indeed occurs at $t=h$, and that the convergence rates are optimal in both of the limiting cases. First we choose $\beta = 3.1$ to test the convergence rates in a smooth situation. With the first-order BDM element we are expecting $h^2$ converge in the Darcy end of the parameter range and $h$ in the Stokes limit.

As is visible from the results in Figures~\ref{fig:thconv_b31} and~\ref{fig:rates_b31}, the behaviour of the problem changes numerically when $t=h$. Thus, even though in practical applications almost always $t > 0$, numerically the problem behaves like the Darcy problem when $t < h$. As can be seen from Figure~\ref{fig:rates_b31}, the converge rates follow quite closely those given by the theory. Furthermore, both the actual error and the a posteriori indicator behave in a similar manner. Notice, that the convergence rate exhibits a slight dip at the point in which the nature of the problem changes. This is a result of the dominating error component changing from pressure to velocity error as we pass into the Stokes regime.

To show the applicability of the a posteriori indicator to mesh refinement, we consider the more irregular case $\beta = 1.52$. Our refinement strategy consists of refining those elements in which the error exceeds 50 percent of the average value. The treshold is halved until at least five percent of the elements have been refined. The edge estimators are shared evenly between the neighbouring elements. When compared to uniform refinement in Figure~\ref{fig:rates_b152}, Figure~\ref{fig:rates_adaptiveb152} shows that the convergence is considerably faster with adaptive refinement. Furthermore, adaptive refinement seems to alleviate the problem of convergence rate drop at the numerical turning point $t=h$, as demonstrated in Figures~\ref{fig:thconv_b152} and~\ref{fig:thconv_adaptiveb152}. Clearly these results indicate that the a posteriori indicator proposed gives reasonable local bounds and can thus be used in adaptive mesh refinement.

\subsection{Comparison with the MINI element}

A common choice for solving the Stokes problem is the classical MINI element~\cite{arnold-brezzi-fortin-84}. This element has been applied to the Brinkman problem and thoroughly analyzed both theoretically and numerically in~\cite{mika-rolf,JSc}. We use the same test case as above with the regularity parameter set to $\beta = 3.1$. Notice, that the mesh-dependent norms~\eqref{def:uhnorm} and~\eqref{def:phnorm} reduce to the ones presented in~\cite{mika-rolf} when a continuous velocity-pressure pair is inserted. Thus we can use the same mesh-dependent norm for computing the error for both of the elements and the results are comparable with one another.

As can be seen from the results in Figures~\ref{fig:thconv_comparisonb31} and~\ref{fig:rates_comparisonb31}, the convergence rate for the MINI element is as expected of the order $h$ throughout the parameter regime. For the BDM1 element, on the other hand, we get $h^2$ convergence in the Darcy regime, and after a slight dip at the turnaround point convergence relative to $h$. Turning our attention to Figure~\ref{fig:thconv_comparisonb31} reveals that the behaviour of the absolute value of the relative error differs vastly for the two elements. Clearly, the BDM element outperforms the MINI element in the Darcy regime by several decades of magnitude, whereas in the Stokes regime the performance of the MINI element is superior. This implicates that it is impossible to clearly tell which element is superior for the Brinkman flow. However, usually real-life reservoirs consist mostly of porous stone with scattered vugs and cracks. Thus the volume of the Stokes-type regime is often small compared to the Darcy regime, implying that the good performance of the BDM element in its natural operation conditions might offer significant performance increase for the overall simulation. In problems with a greater fraction of void space, such as filters with large free-flow areas separated by permeable thin layers, the Stokes-based elements might be a more natural choice.

\subsection{Postprocessing}

In this section, we show the necessity of the postprocessing by comparing the behaviour of both the exact error and the a posteriori estimator for the original and the postprocessed pressure. We shall use the same test case as in the previous sections. First we choose $\beta = 3.1$ for testing the effect of the postprocessign on the convergence. On the second run we choose $\beta=1.52$ and use the same mesh refinement strategy as before to show the necessity of the post-processing for the usefulness of the error estimator.

From the results of Figure~\ref{fig:thconv_noppb31}, it is immediately evident that in the case of a small parameter $t$ corresponding to a Darcy-type porous flow the postprocessing procedure is of vital importance for the method to work. However, as the viscosity increases the weighting of the pressure error changes, and the norm is more tolerable of variations in the pressure. Once again, this change in behaviour appears exactly at $t=h$. As regards convergence rate, the non-postprocessed method is able to attain a close-to-optimal rate in the Stokes regime, cf. Figure~\ref{fig:rates_noppb31}. The same pattern is evident also with the more irregular test case with $\beta = 1.52$ as shown in Figures~\ref{fig:thconv_noppadab152} and~\ref{fig:rates_noppadab152}. When in the Darcy regime, the indicator simply does not work in adaptive refinement since the pressure solution lacks the necessary extra accuracy. However, when crossing into the Stokes regime, convergence starts to occur, and the adaptive procedure achieves a rather high rate of convergence. Evidently, postprocessing is vital for the method in the Darcy regime. Even though the method seems to work without postprocessing in the Stokes regime, one cannot guarantee convergence and thus the method should always be used only in conjunction with the postprocessing scheme for the pressure. The cost of solving the postprocessed pressure is negligible compared to the total workload since the procedure is performed elementwise. 

Note that with postprocessing using the BDM family of elements is more economical than using the RT family with respect to the number of degrees of freedom, since we can use initially one order lower approximation for the pressure, and still get the same order of polynomial approximation and convergence after the postprocessing procedure. 

\subsection{Hybridized method}

Here we study the convergence of the fully hybridized method for different parameter values using the modified norm~\eqref{def:uhnorm_hybrid} and the a posteriori estimator~\eqref{def:edgeind_mod}. We employ once again the same exact solution as in the other convergence tests with the same values for the regularity parameter $\beta$. The errors are measured using the modified norm~\eqref{def:uhnorm_hybrid}.

First we hybridize all of the edges in $\mathcal E_h$. As Figures~\ref{fig:thconv_hyb31} to~\ref{fig:rates_hyb152} clearly show, the hybridized method behaves in an identical manner compared to the standard formulation, both in the case of a regular and an irregular exact solution. Thus it can be concluded that the error induced by hybridizing the jump terms inexactly is negligible, and the proposed convergence rates are attained.

In the second test we divide the domain $\Omega$ into 16 triangular subdomains, and hybridize the finite element spaces only on the domain boundaries and employ standard $H(div)$-conforming BDM1 elements in the subdomains. Accordingly, the estimator is modified only on the hybridized faces. Note, that since the hybridization for the tangential jumps is not exact, the system of equations for the domain decomposition method differs from the one for the fully hybridized method, since we are using the original bilinear form~\eqref{def:hbilinear} in the subdomains. As is evident from Figures~\ref{fig:thconv_dd31} to~\ref{fig:rates_dd152}, we have the correct convergence for the hybridized domain decomposition method, too.

Lastly, we investigate the condition number of the Schur complement matrix~\eqref{eq:schur_system} for $\beta = 3.1$ and values of $t$ ranging from $t=0$ to $t=10^3$. The total number of degrees of freedom is kept approximately constant, whilst the number of the subdomains in the domain decomposition method is varied. The subdomains are approximately equal in size. As is evident from Figure~\ref{fig:condtest}, in the Darcy regime the condition number is rather insensitive to the value of the parameter $t$, however as $t$ is increased and we pass to the Stokes regime the condition number grows as $t^2$. Furthermore, in the Darcy regime we observe an increasing condition number related to the size of the subdomain $\Omega_i$ as ${\mbox{diam}(\Omega_i)}^{-1}$, as one would expect.

\subsection{Pressure driven flow in a channel}

A viscous flow in a narrow channel driven by a pressure drop gives us a test case for which the exact solution is known. The flow is driven by a linear pressure drop with no-slip boundary conditions for $t>0$. For the Darcy case $t=0$, only the normal component of the velocity vanishes on the boundary. We will test the convergence with Nitsche's method for the tangential boundary condition with adaptive mesh refinement. Denote by $\mathcal E_{h,\bm u_{\bm \tau}}$ the collection of edges residing on the part of the boundary $\Gamma_{\bm u_{\bm \tau}} \subset \partial \Omega$ on which we set the tangential velocity. To set the tangential velocity we then add the term 
\begin{equation}
t^2 \btedgesum \{ \frac{\alpha}{h_E} \eint{\bm u_{\bm \tau}}{\bm v_{\bm \tau}} - \eint{\pder{\bm u}{n}}{\bm v_{\bm \tau}} - \eint{\pder{\bm v}{n}}{\bm u_{\bm \tau}}\}
\end{equation}
to the bilinear form $a_h(\cdotp,\cdotp)$. The loading is augmented by the term
\begin{equation}
t^2 \btedgesum \{ \frac{\alpha}{h_E} \eint{\bm u_{D,\bm \tau}}{\bm v_{\bm \tau}} - \eint{\pder{\bm v}{n}}{\bm u_{D, \bm \tau}}\}.
\end{equation}
in which $\bm u_D$ denotes the velocity boundary condition. Thus, in the limit $t=0$ a standard Darcy problem with Dirichlet normal boundary conditions is solved. Finally, the error estimator~\eqref{def:edgeind} is modified by adding the corresponding boundary face terms to the estimator.

In the unit square we take the pressure to be $p = -x + \frac 1 2$. Then zero boundary conditions for the velocity for $y=0$ and $y=1$ give the exact solution $\bm u = (u, 0)$, with the $x$-directional velocity given by
\begin{equation}
u=
\begin{cases}
(1 + e^{1/t} - e^{(1-y)/t} - e^{y/t})/(1+e^{1/t}),\quad t > 0 \\
1,\quad t=0.
\end{cases}
\end{equation}

As Figures~\ref{fig:poismesh05} through~\ref{fig:poismesh0005} demonstrate, the adaptive process is able to catch the boundary layer effectively in the case where no-slip boundary conditions are desired on the boundary, leading to nearly identical converge rates for different parameter values as indicated by Figure~\ref{fig:rates_poisseuille}. Note, that as the mesh is refined on the edges, the problem changes numerically to a Stokes-type problem near the boundary since the mesh size $h$ drops locally below the parameter $t$. Different boundary conditions between porous media and free-flow regions are discussed from a more physical viewpoint in e.g.~\cite{EFHL08}.

\subsection{Example with realistic material parameters}
 
In this final section we consider the Society of Petroleum Engineers test case SPE10~\cite{spe10} with realistic porosity and permeability data for a heteregeneous oil reservoir. Following~\cite{Popov09}, we make the common choice $\tilde \mu = \mu$. We consider a single layer flow as a two-dimensional flow problem. For the outflow quantities to have meaningful units, we assume a thickness of $2$ ft for the layer. The dimensions of the problem are $2200 \times 1200$ ft, the fluid is chosen as e.g. water with a viscosity of $1$ cP. The flow is driven by a pressure on the left-hand side of the domain. The top and bottom boundaries have a no-flow boundary condition. To demonstrate the effect of the Brinkman term to the flow, we modify the permeability data by adding a rectangular streak of very high permeability with the dimensions $1100 \times 20$ ft in the middle of the domain. The advantage of the Brinkman model is the ability to model cracks or vugs by simply assigning a very high (or infinite) permeability to these parts of the domain. 

In the numerical tests the performance of the a posteriori estimator for adaptive refinement with extremely heterogenous data with vast differences in magnitude is studied. We start all of the computations from a coarse triangulation of the domain with 328 triangles and 1352 total degrees of freedom. This results in an initial element diameter of approximately 150 ft. In our tests we employ layer 68 from the SPE10 dataset, and assume the permeability to be as in equation~\eqref{eq:perm_diag}. 

In the first test case we consider the original dataset. In addition, we add a streak with a very high permeability of $10^6$ Darcy representing e.g. a gravel-filled crack inside the domain in two slightly different orientations, cf. Figure~\ref{fig:perms}. We also compute the net flow exiting the domain through the right boundary. We choose a simple refinement strategy in which we always refine one percent of the elements in which the estimator attains the largest values. The edgewise indicators are divided evenly to neighbouring elements. The refinements are performed until the limit of 100 000 degrees of freedom is reached. Figure~\ref{fig:flow_nostreak} demonstrates that the a posteriori estimator is able to capture the fine details of the non-modified permeability field. In Figures~\ref{fig:flow_streak} and~\ref{fig:flow_tilt} it is clearly visible, how the method is able to find the non-piercing permeability streak, even if the streak is outside the natural flow path. In particular, in Figure~\ref{fig:flow_streak} it is clearly visible how the flow paths to and from the high permeability region are found by the refinement procedure. In Figure~\ref{fig:flowrate1} we compare the flow rate for the different permeability configurations. As can be seen, the flow rate is stabilized as the mesh refinement procedure proceeds.

In the second test we modify the permeability field by adding a through-domain crack 20 feet wide into the domain by extending the streak from the first test case and we reduce the pressure loading to $1$ Pa. The permeability is set to $10^{15}$ Darcy in the crack -- thus modelling in essence a void space. We compare the Brinkman and Darcy models. In this case we use a modified refinement strategy to speed up the adaptive process. In the first three refinements we refine 15 percent of the elements, in the next three 10 percent, followed by 5 percent in the next three and 2 percent in all of the subsequent refinements. We stop refining once 100 000 degrees of freedom are attained. Clearly Figures~\ref{fig:piercing_bman} and~\ref{fig:piercing_darcy} indicate that both for the Darcy (i.e. $t=0$) and Brinkman models the indicator finds the piercing crack. However, as can be seen from the net flow results on Figure~\ref{fig:flowrate2}, the Darcy model overestimates the flow by many orders of magnitude due to the absence of the viscous forces. Also the velocity of the fluid grows unnaturally large for the Darcy model, but in the Brinkman model the velocity stays reasonable. The net flow rate results also clearly indicate how the refinement proceeds gradually until the piercing crack is found and the flow rate jumps, after which it is again stabilized.

Overall, simulating fine cracks requires extremely fine mesh around the cracks, as demonstrated by the mesh density plots. Thus uniform refinement would yield systems too large to practically solve, in particular in 3D. We also investigated the value of the ratio $\frac{t}{\sigma_K h_K}$ for all of the Brinkman simulations. When this ratio is greater than unity, we are numerically in the Stokes regime. Clearly, in the case of high permeability cracks we obtain flow domains in which we actually move numerically into the Stokes regime in the Brinkman model when modelling fractures and cracks.

\section{Conclusions}

We were able to numerically demonstrate the theoretical results for the Darcy-based method of~\cite{KSbman} for solving the Brinkman equation. Furthermore a hybridization technique was introduced for the whole system, which might prove useful for handling very large systems with the domain decomposition method. The hybridized method was also shown, both theoretically and numerically, to possess the same convergence properties as the original problem for all values of the parameter $t$. We also demonstrated the performance of the a posteriori estimator by applying it successfully to adaptive mesh refinement, and compared the Brinkman model to the Darcy model in the framework of an oil reservoir simulation.

\clearpage

%
%

\begin{figure}[!ht]
\begin{minipage}[b]{0.45\linewidth}
\centering
\includegraphics[scale=1.0]{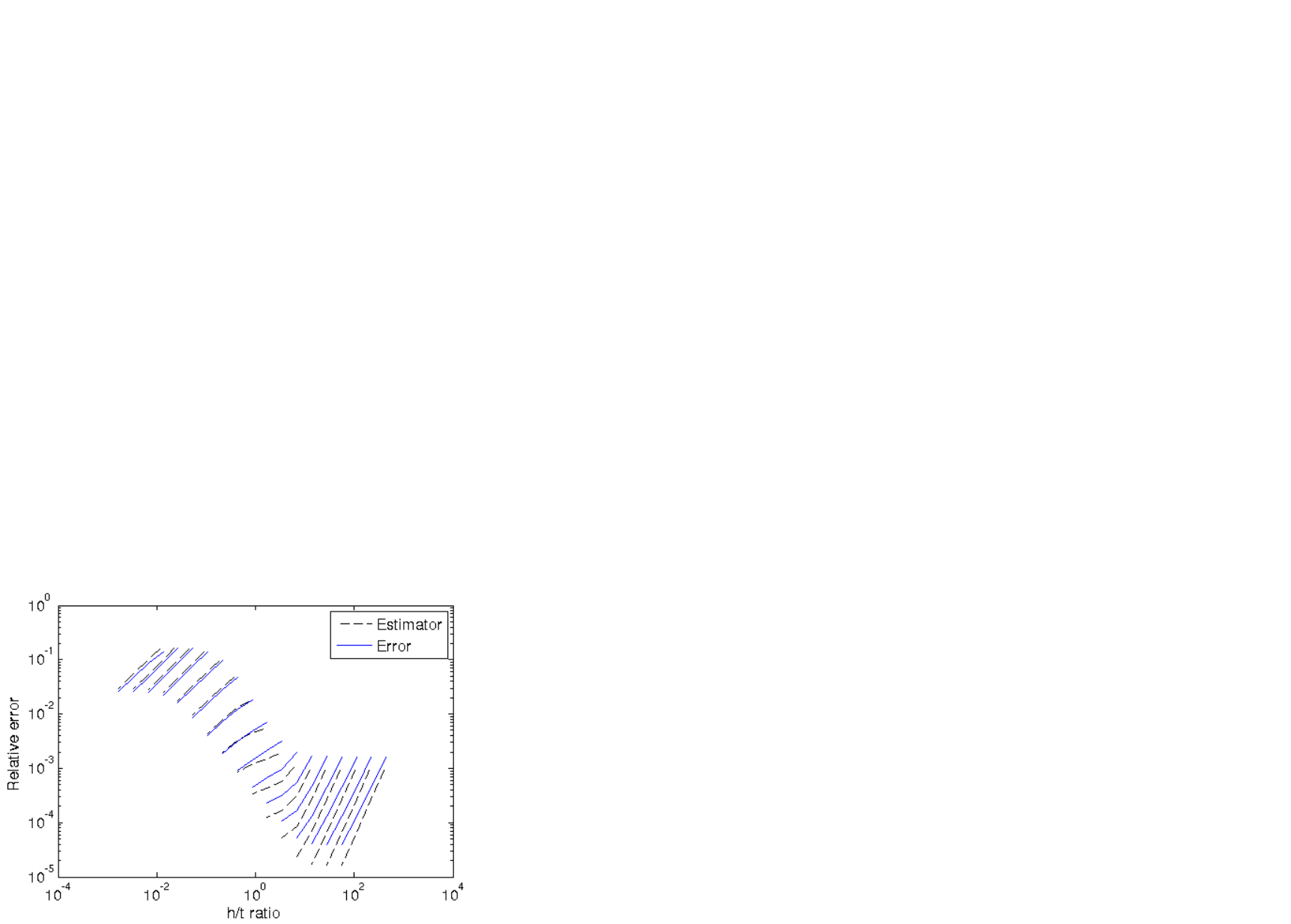}
\caption{Relative error in the mesh dependent norm. Uniform refinement for a smooth solution $\beta = 3.1$.}
\label{fig:thconv_b31}
\end{minipage}
\hspace{0.5cm}
\begin{minipage}[b]{0.45\linewidth}
\centering
\includegraphics[scale=1.0]{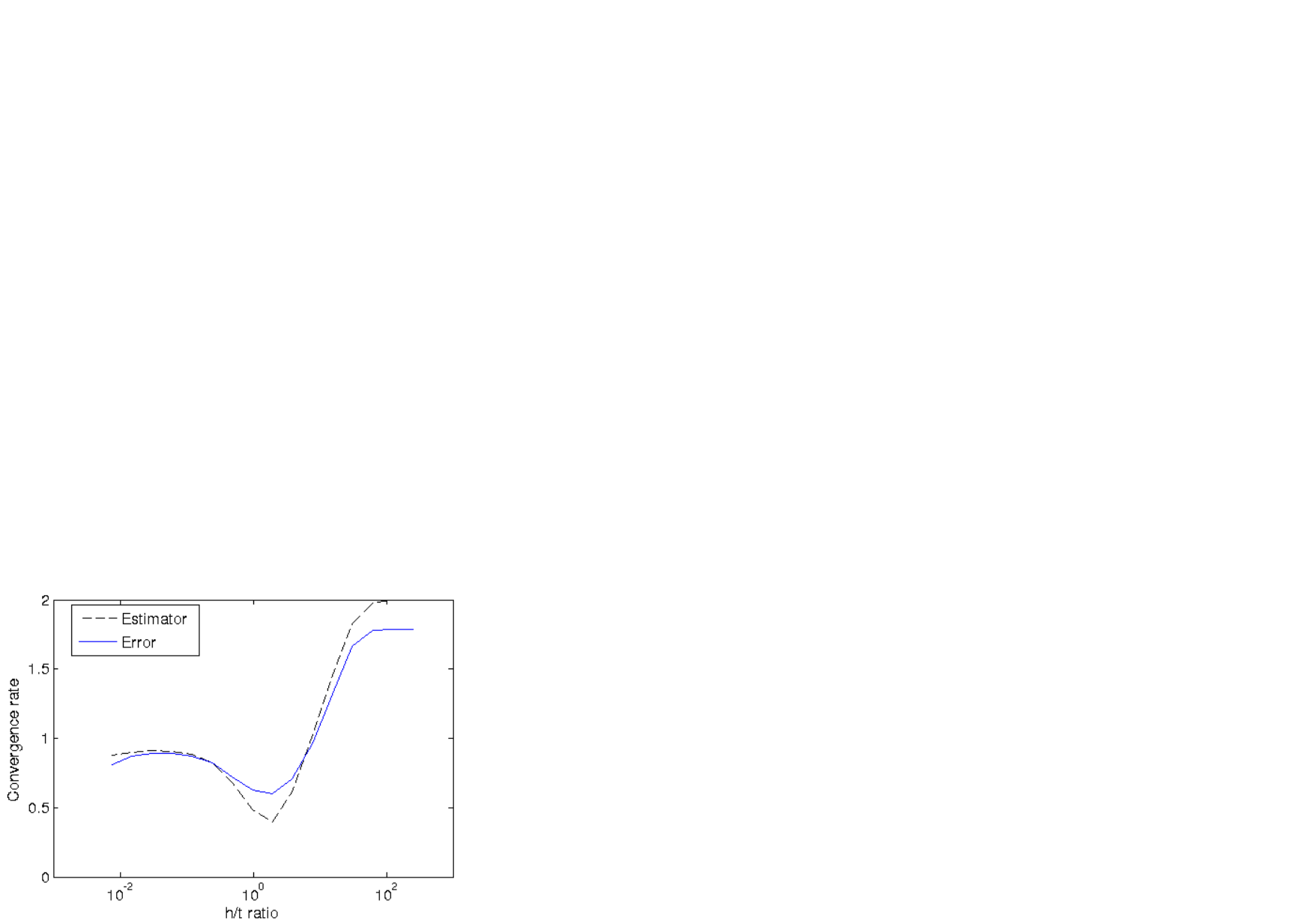}
\caption{Converge rate for different values of $t$. Uniform refinement for a smooth solution $\beta = 3.1$.}
\label{fig:rates_b31}
\end{minipage}
\end{figure}

\begin{figure}[!ht]
\begin{minipage}[b]{0.45\linewidth}
\centering
\includegraphics[scale=1.0]{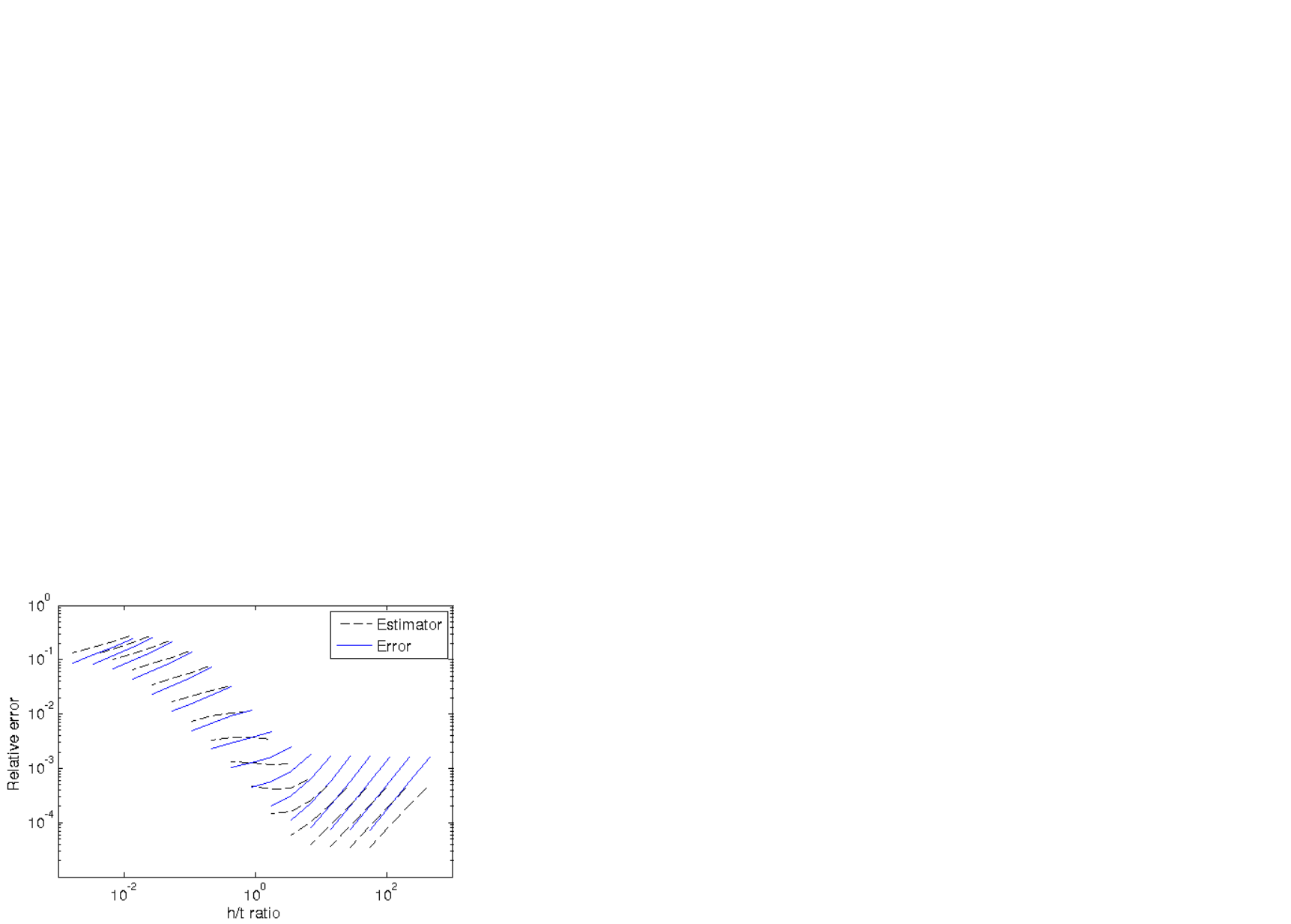}
\caption{Relative error in the mesh dependent norm. Uniform refinement for an irregular solution $\beta = 1.52$.}
\label{fig:thconv_b152}
\end{minipage}
\hspace{0.5cm}
\begin{minipage}[b]{0.45\linewidth}
\centering
\includegraphics[scale=1.0]{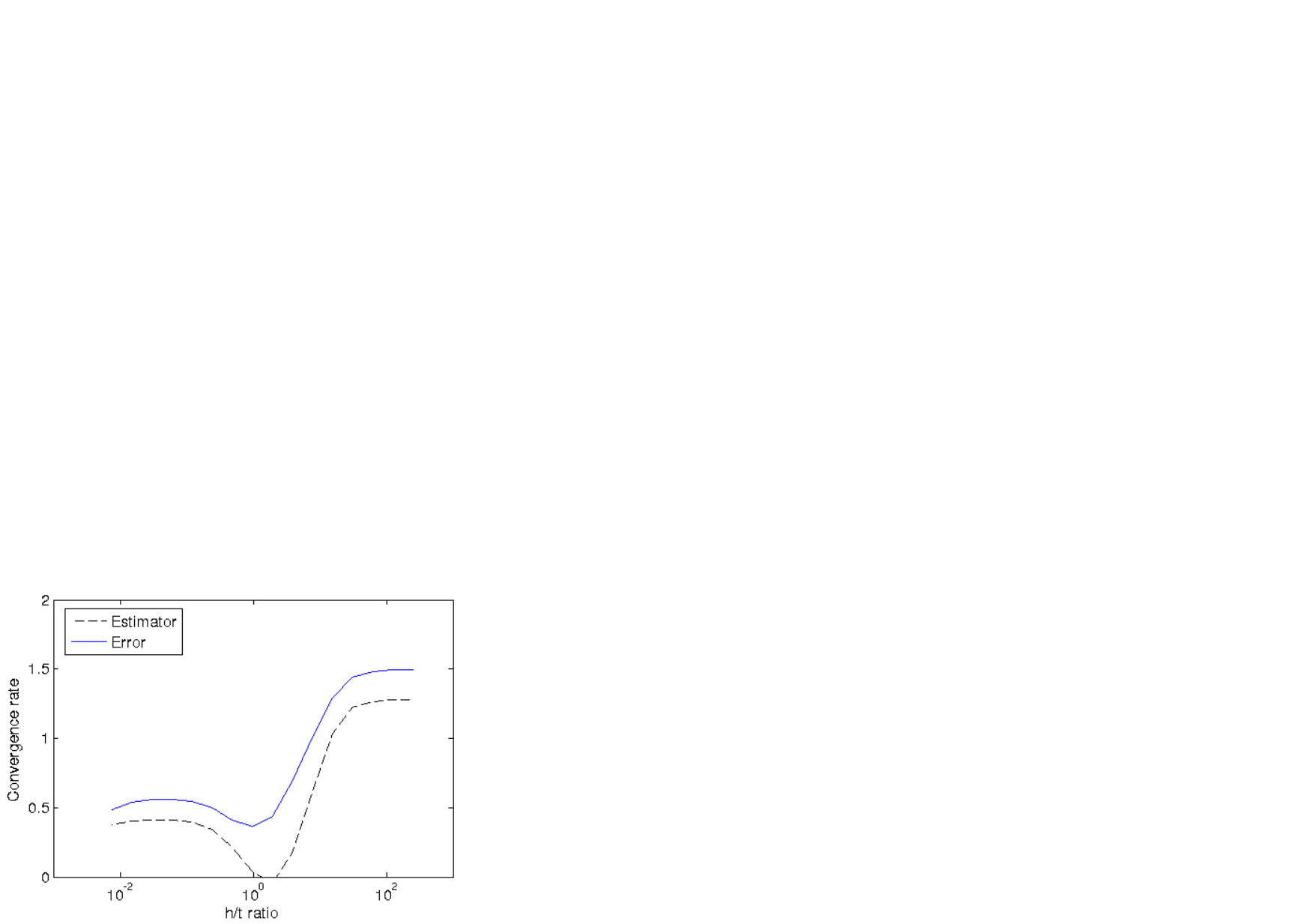}
\caption{Converge rate for different values of $t$. Uniform refinement for an irregular solution $\beta = 1.52$.}
\label{fig:rates_b152}
\end{minipage}
\end{figure}

\begin{figure}[!ht]
\begin{minipage}[b]{0.45\linewidth}
\centering
\includegraphics[scale=1.0]{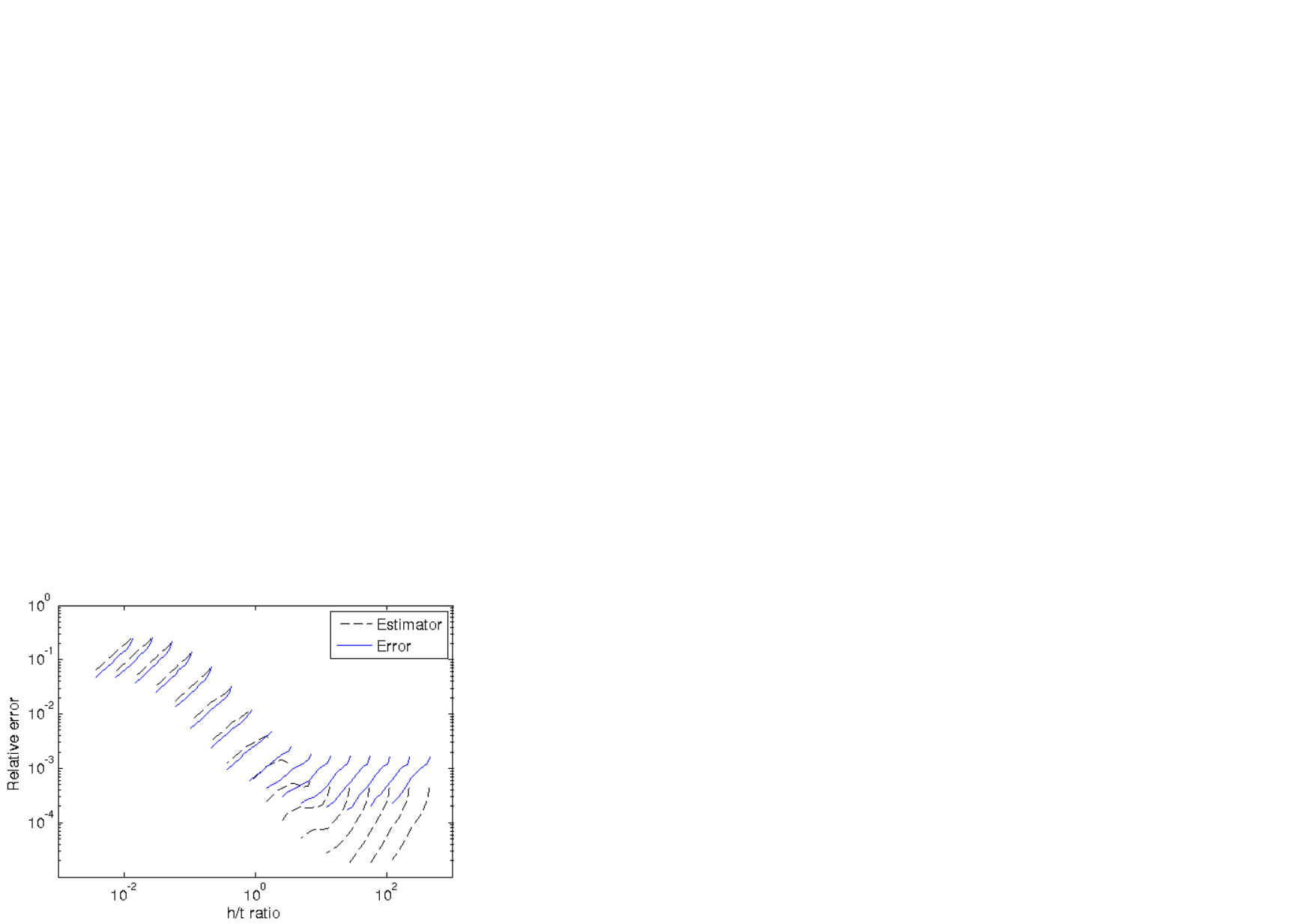}
\caption{Relative error in the mesh dependent. Adaptive refinement for an irregular solution $\beta = 1.52$.}
\label{fig:thconv_adaptiveb152}
\end{minipage}
\hspace{0.5cm}
\begin{minipage}[b]{0.45\linewidth}
\centering
\includegraphics[scale=1.0]{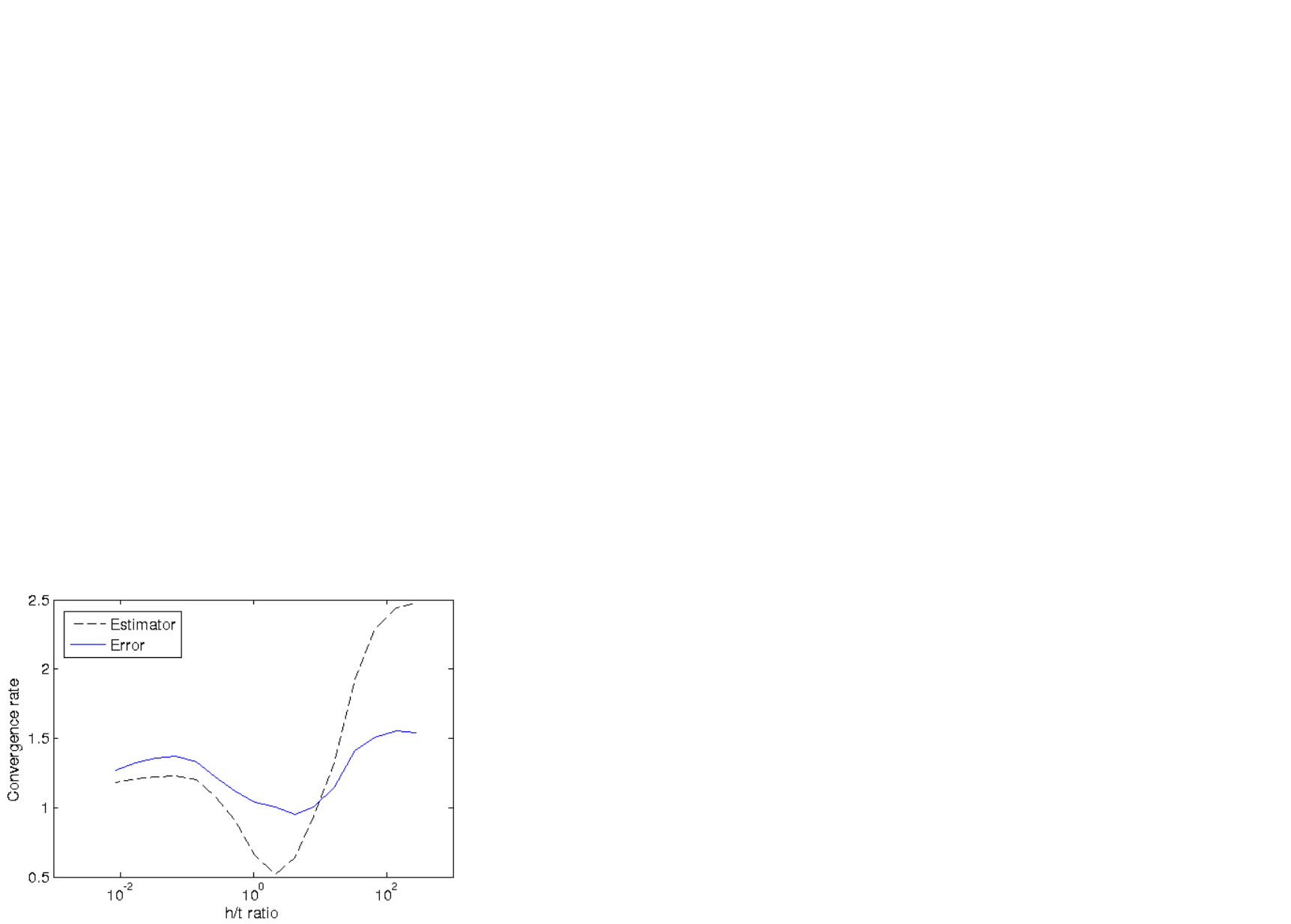}
\caption{Converge rate for different values of $t$. Adaptive refinement for an irregular solution $\beta = 1.52$.}
\label{fig:rates_adaptiveb152}
\end{minipage}
\end{figure}

%
%

\begin{figure}[!ht]
\begin{minipage}[b]{0.45\linewidth}
\centering
\includegraphics[scale=1.0]{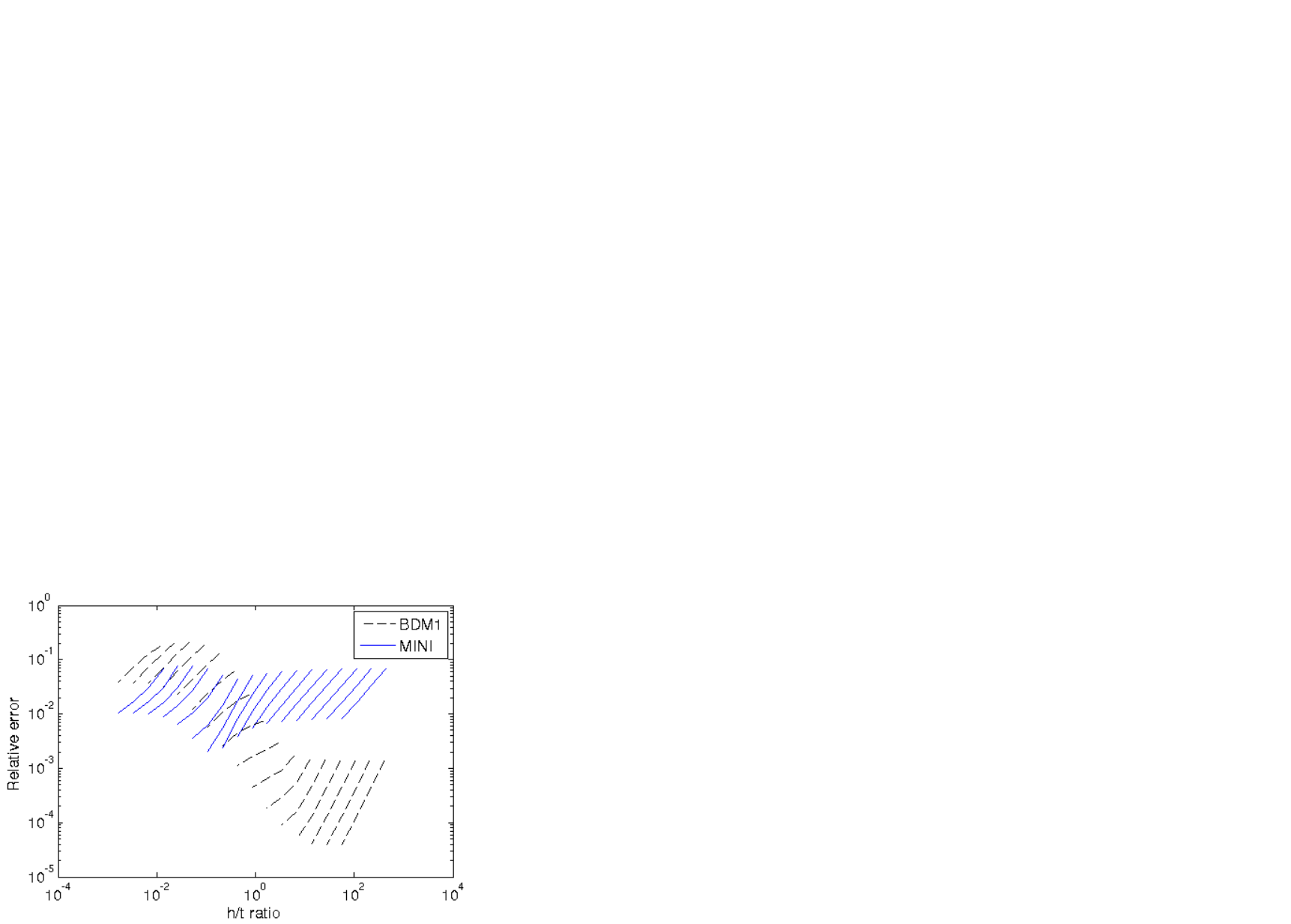}
\caption{Relative error in the mesh dependent norm. Uniform refinement for a smooth solution $\beta = 3.1$.}
\label{fig:thconv_comparisonb31}
\end{minipage}
\hspace{0.5cm}
\begin{minipage}[b]{0.45\linewidth}
\centering
\includegraphics[scale=1.0]{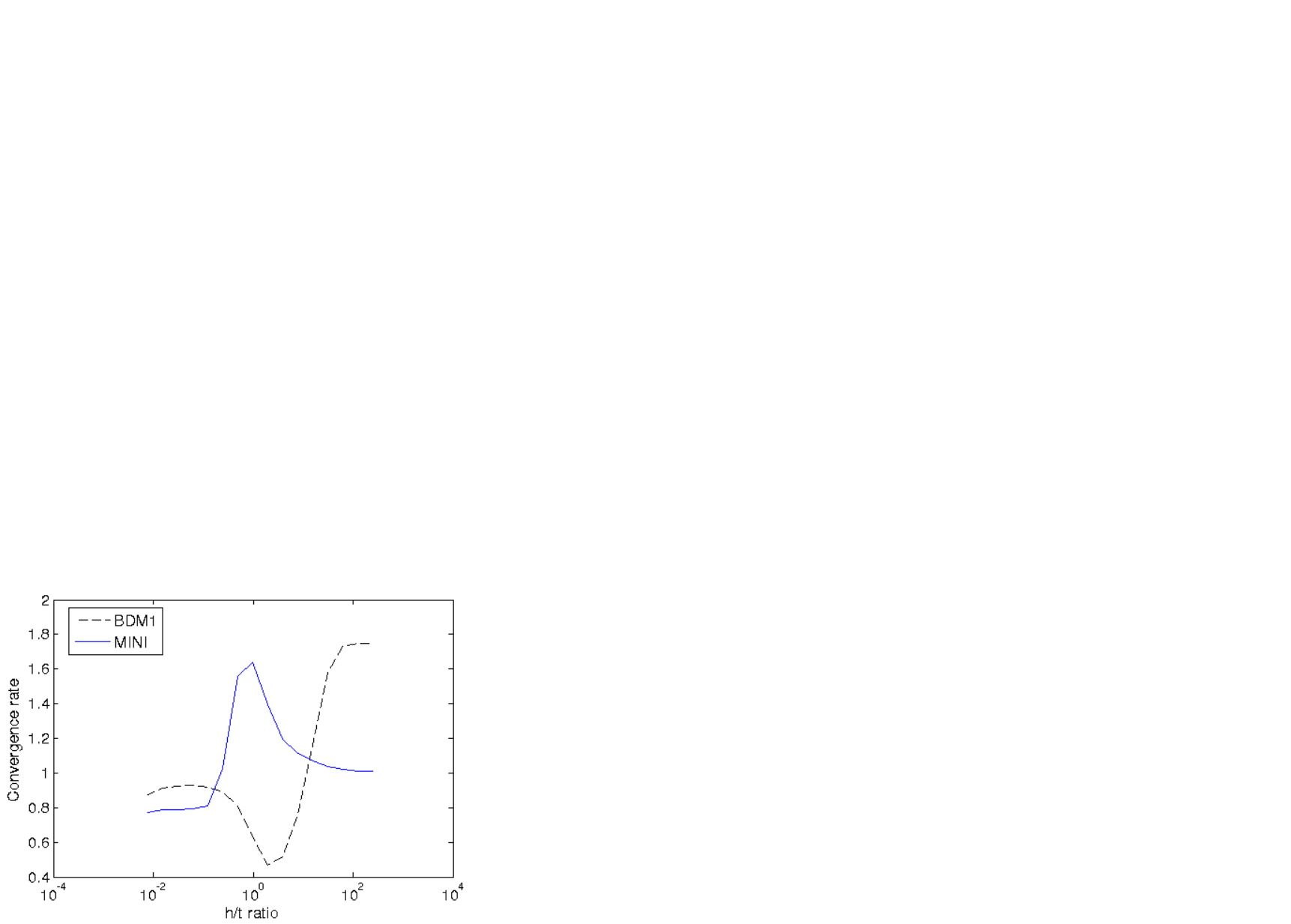}
\caption{Converge rate for different values of $t$. Uniform refinement for a smooth solution $\beta = 3.1$.}
\label{fig:rates_comparisonb31}
\end{minipage}
\end{figure}

%
%

\begin{figure}[!ht]
\begin{minipage}[b]{0.45\linewidth}
\centering
\includegraphics[scale=1.0]{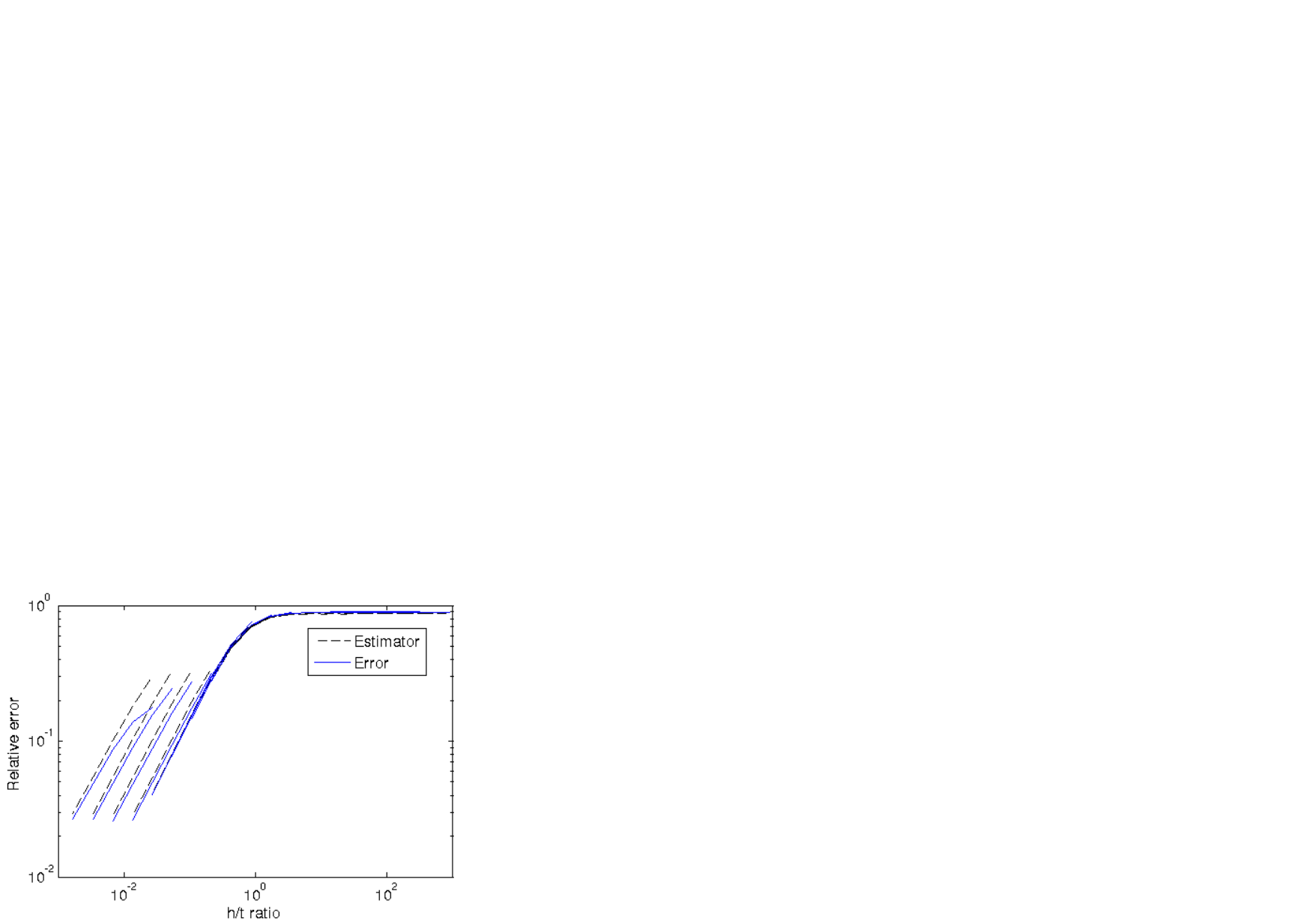}
\caption{Relative error in the mesh dependent norm without postprocessing. Uniform refinement for $\beta = 3.1$.}
\label{fig:thconv_noppb31}
\end{minipage}
\hspace{0.5cm}
\begin{minipage}[b]{0.45\linewidth}
\centering
\includegraphics[scale=1.0]{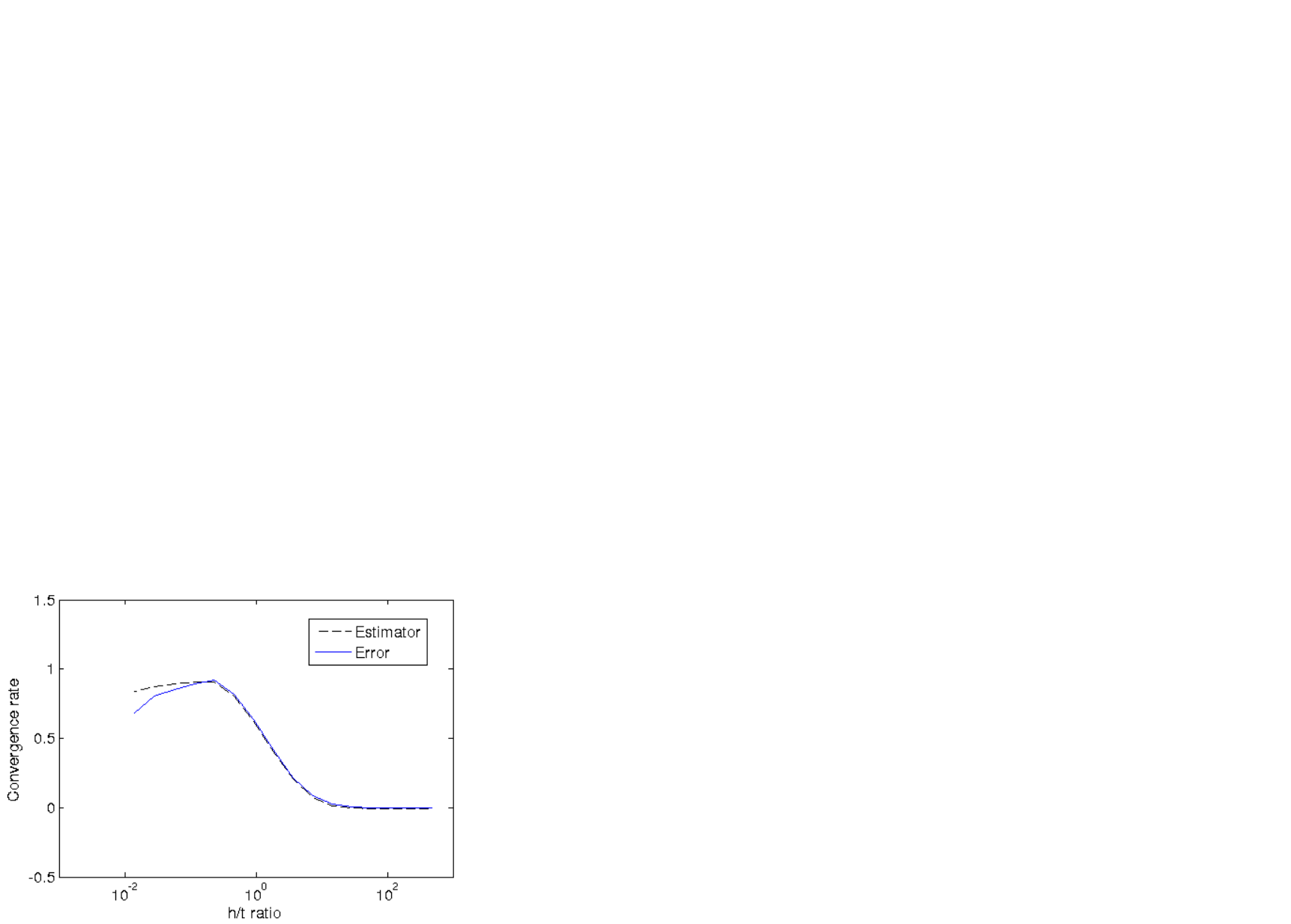}
\caption{Converge rate for different values of $t$ without postprocessing. Uniform refinement for $\beta = 3.1$.}
\label{fig:rates_noppb31}
\end{minipage}
\end{figure}

\begin{figure}[!ht]
\begin{minipage}[b]{0.45\linewidth}
\centering
\includegraphics[scale=1.0]{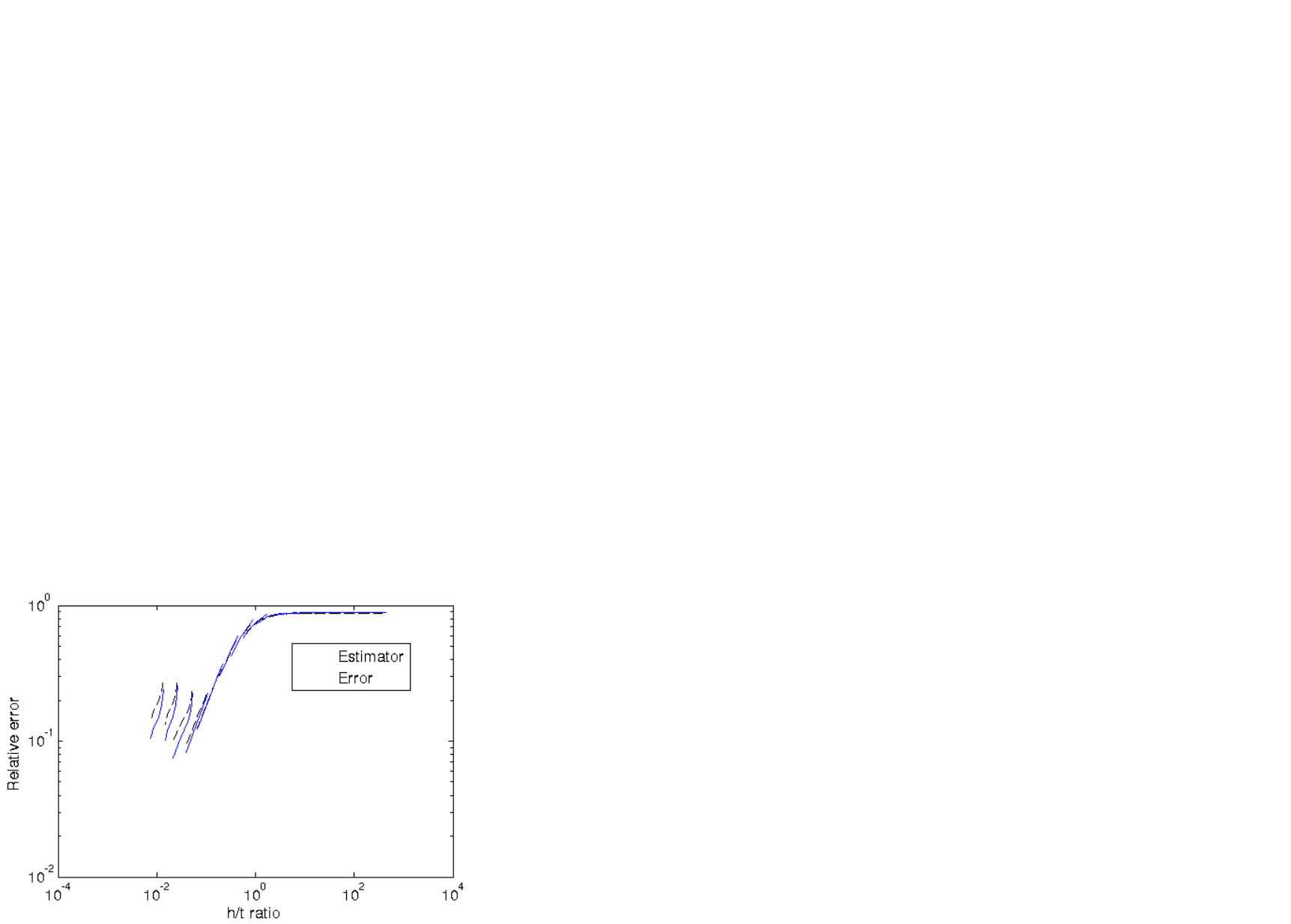}
\caption{Relative error in the mesh dependent norm without postprocessing. Adaptive refinement for $\beta = 1.52$.}
\label{fig:thconv_noppadab152}
\end{minipage}
\hspace{0.5cm}
\begin{minipage}[b]{0.45\linewidth}
\centering
\includegraphics[scale=1.0]{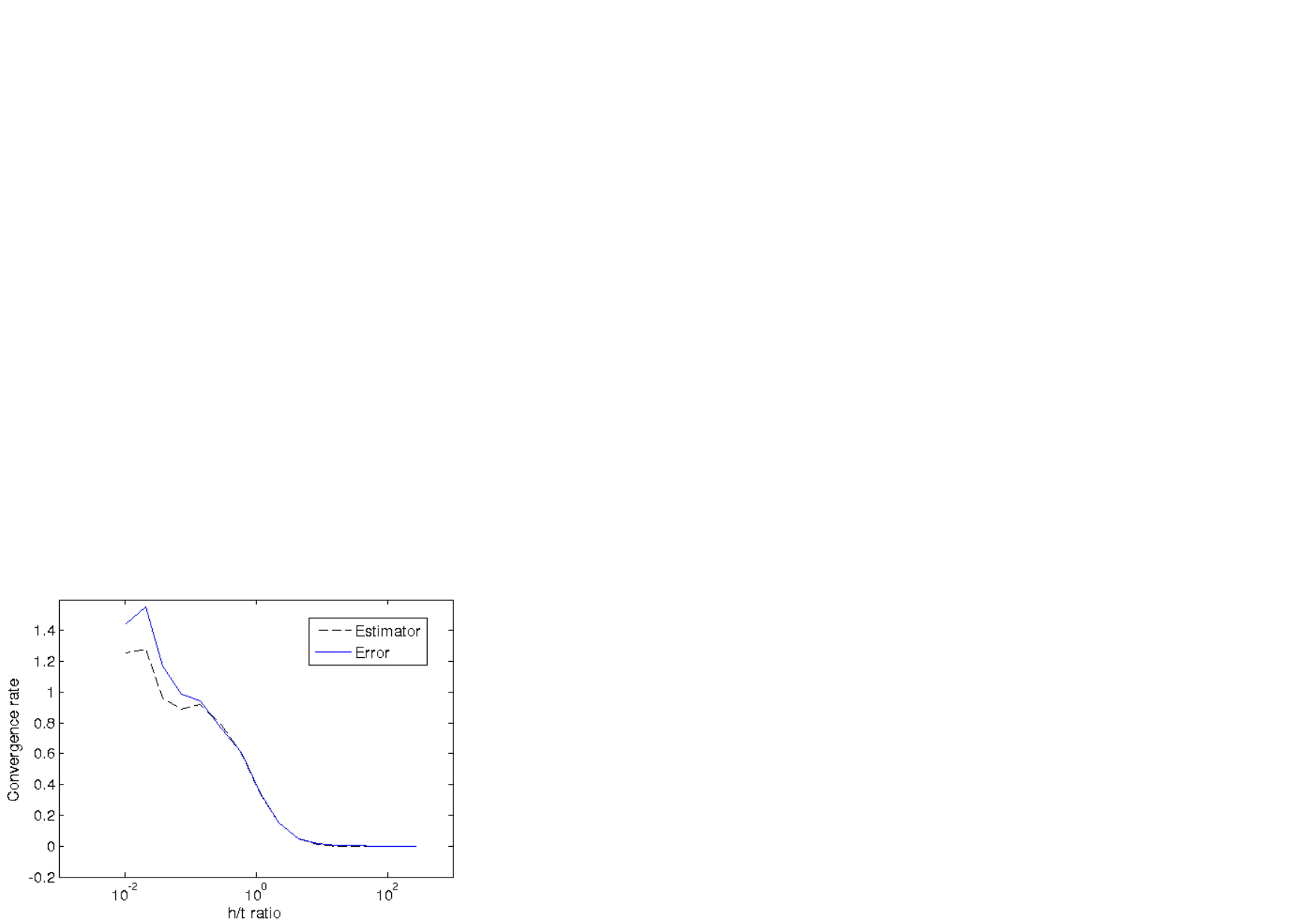}
\caption{Converge rate for different values of $t$ without postprocessing. Adaptive refinement for $\beta = 1.52$.}
\label{fig:rates_noppadab152}
\end{minipage}
\end{figure}

%
%

\begin{figure}[!ht]
\begin{minipage}[b]{0.45\linewidth}
\centering
\includegraphics[scale=1.0]{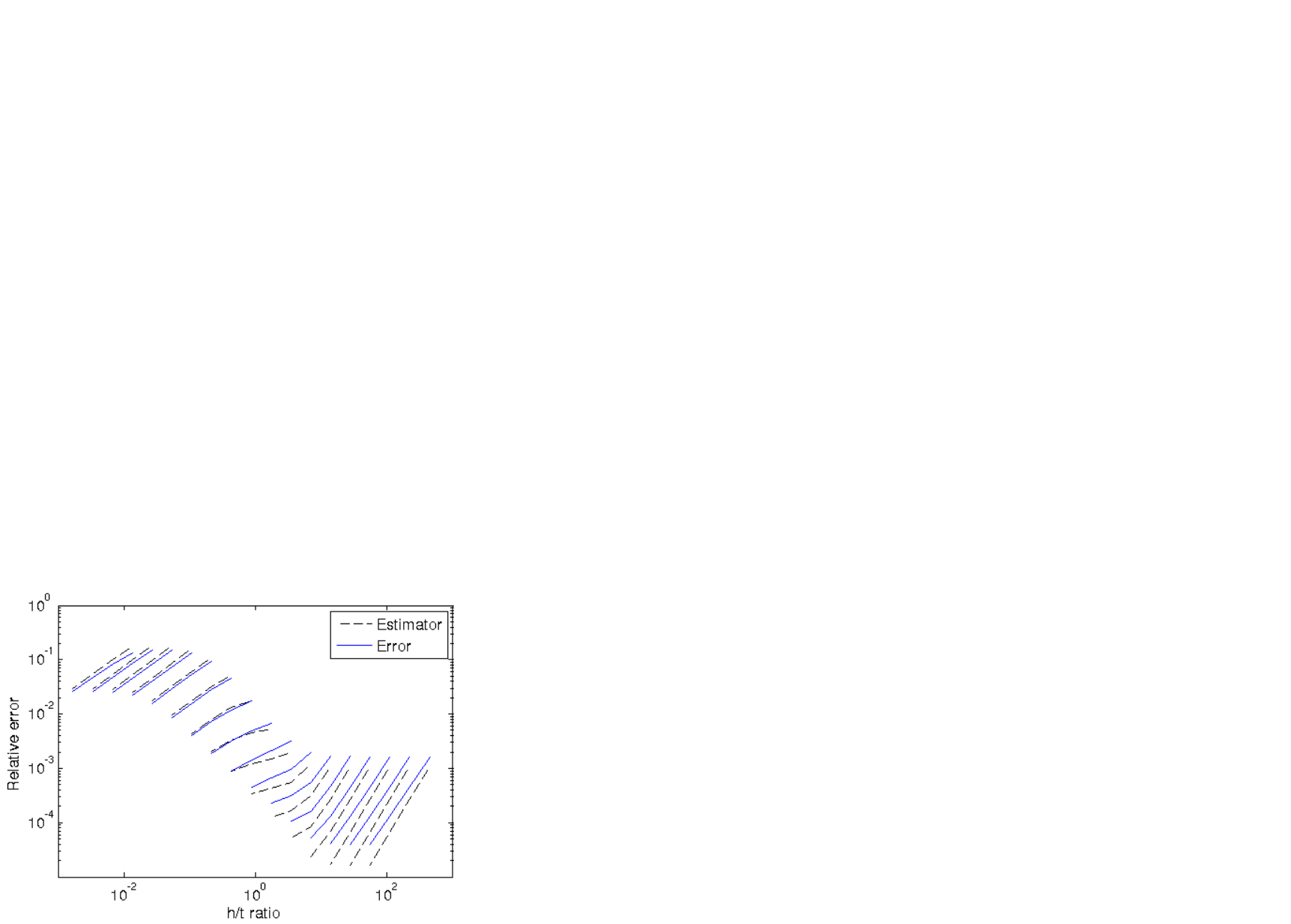}
\caption{Relative error in the mesh dependent norm for the hybridized method with uniform refinement and $\beta = 3.1$.}
\label{fig:thconv_hyb31}
\end{minipage}
\hspace{0.5cm}
\begin{minipage}[b]{0.45\linewidth}
\centering
\includegraphics[scale=1.0]{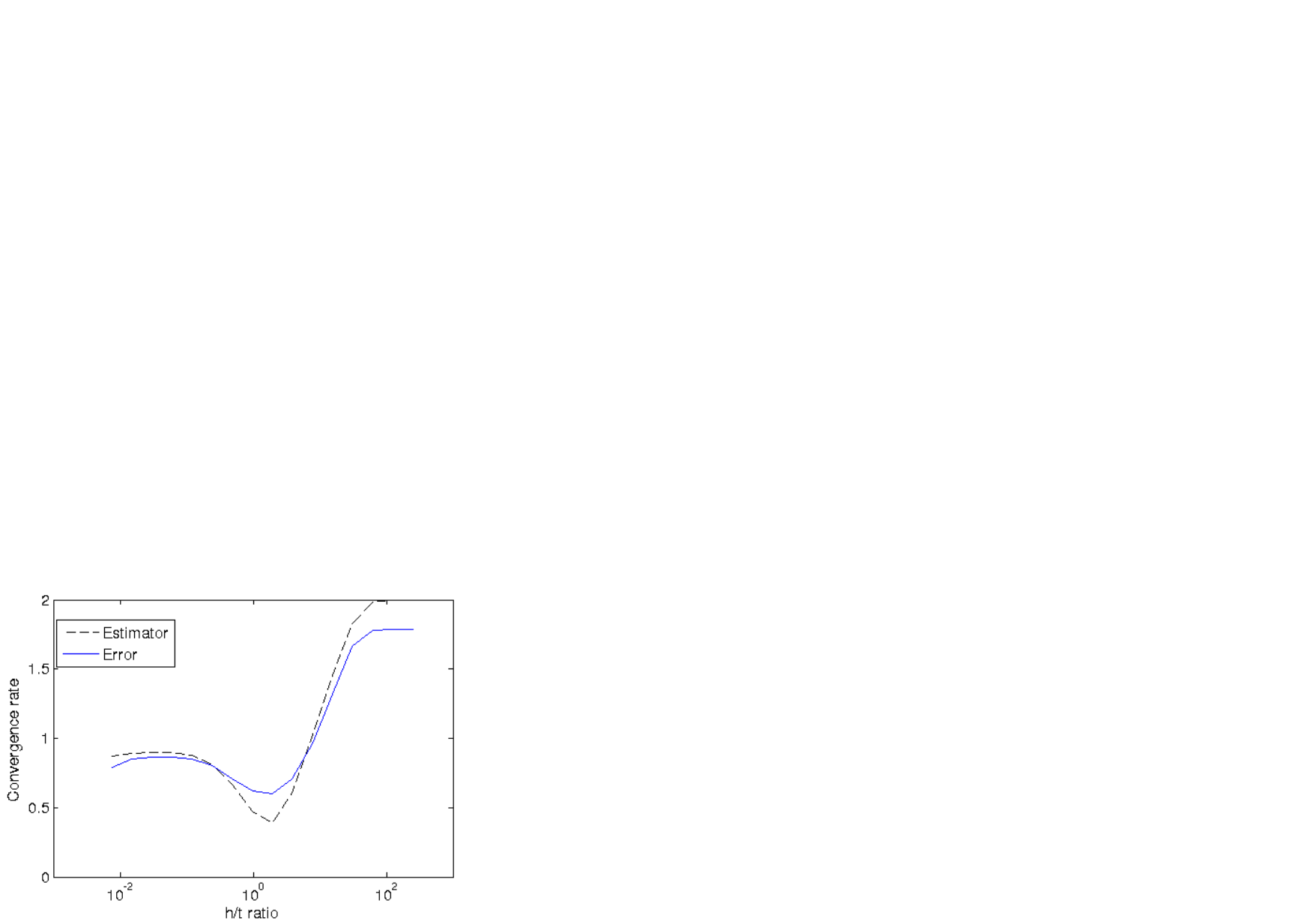}
\caption{Converge rate for different values of $t$ for the hybridized method with $\beta = 3.1$.}
\label{fig:rates_hyb31}
\end{minipage}
\end{figure}

\begin{figure}[!ht]
\begin{minipage}[b]{0.45\linewidth}
\centering
\includegraphics[scale=1.0]{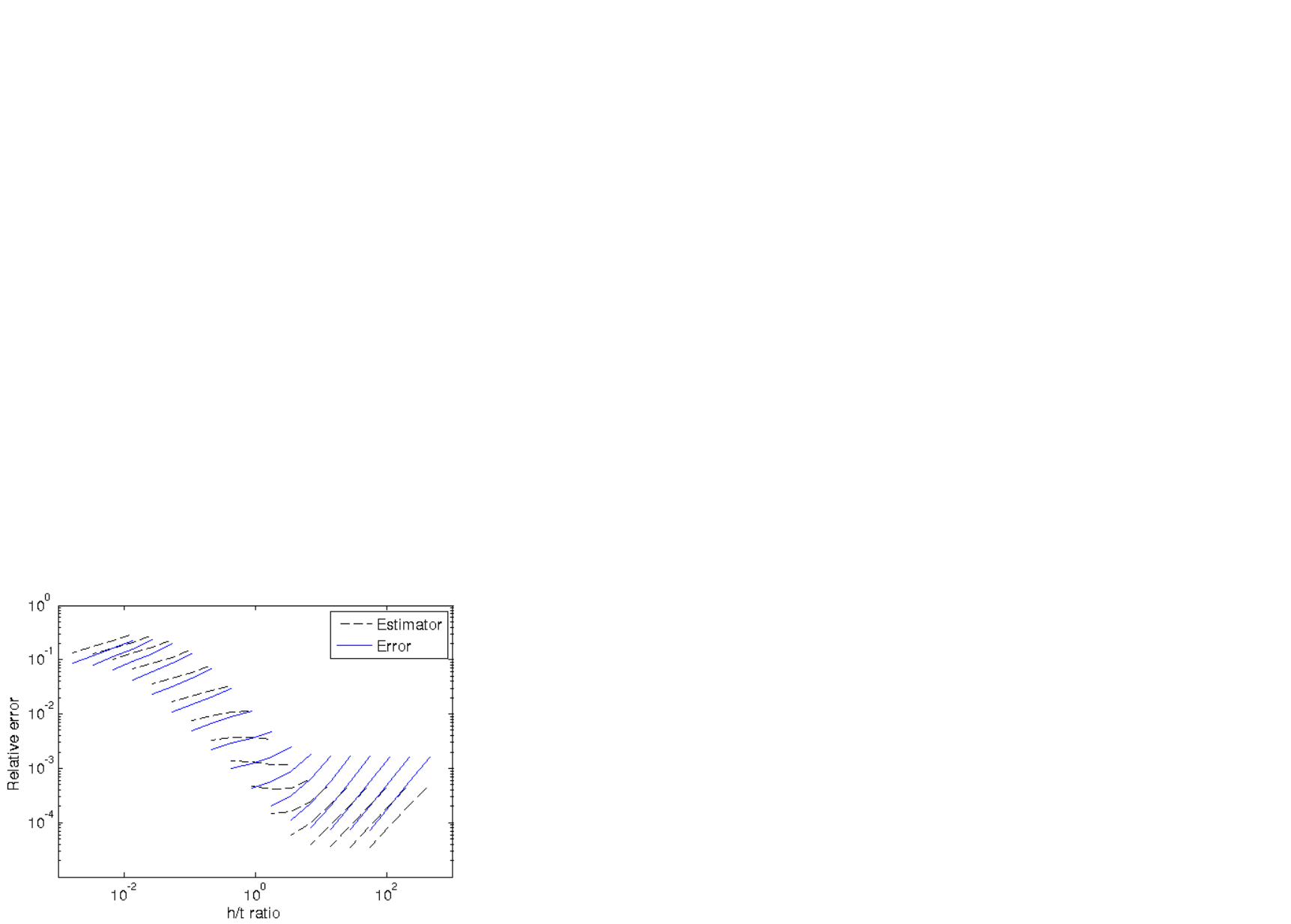}
\caption{Relative error in the mesh dependent norm for the hybridized method with uniform refinement and $\beta = 1.52$.}
\label{fig:thconv_hyb152}
\end{minipage}
\hspace{0.5cm}
\begin{minipage}[b]{0.45\linewidth}
\centering
\includegraphics[scale=1.0]{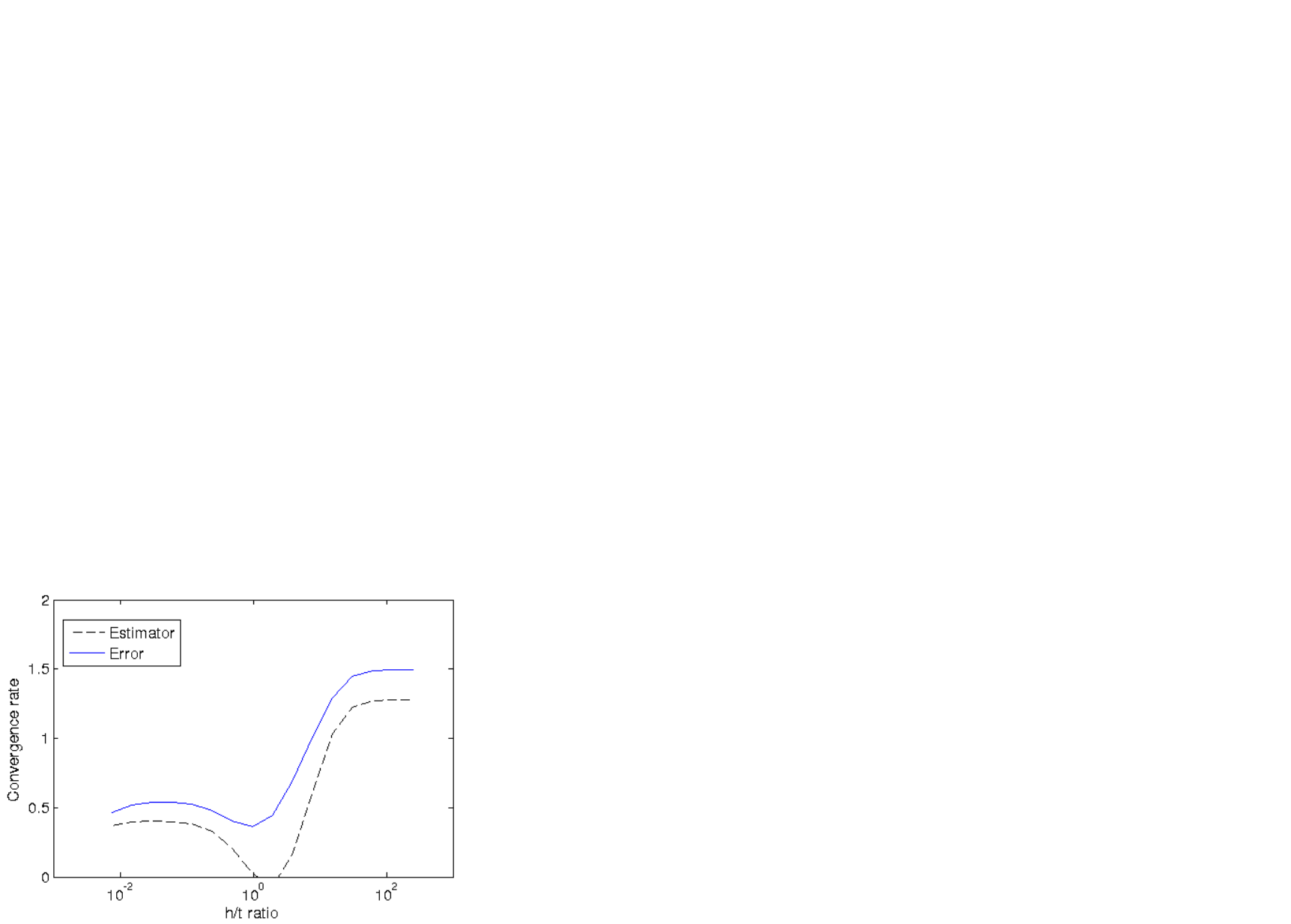}
\caption{Converge rate for different values of $t$ for the hybridized method with $\beta = 1.52$.}
\label{fig:rates_hyb152}
\end{minipage}
\end{figure}

\begin{figure}[!ht]
\begin{minipage}[b]{0.45\linewidth}
\centering
\includegraphics[scale=1.0]{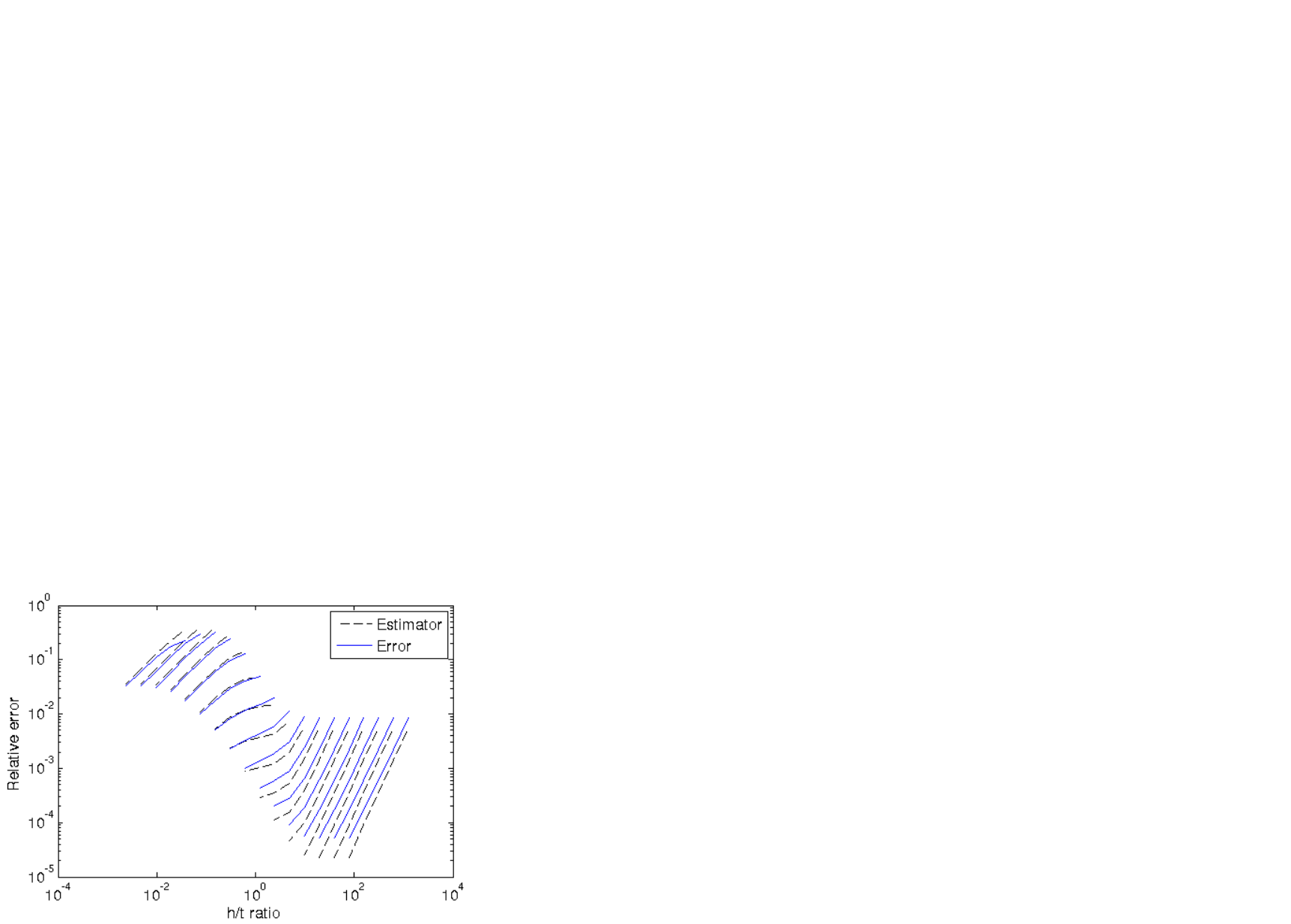}
\caption{Relative error in the mesh dependent norm for the domain decomposition with 16 subdomains and $\beta = 3.1$.}
\label{fig:thconv_dd31}
\end{minipage}
\hspace{0.5cm}
\begin{minipage}[b]{0.45\linewidth}
\centering
\includegraphics[scale=1.0]{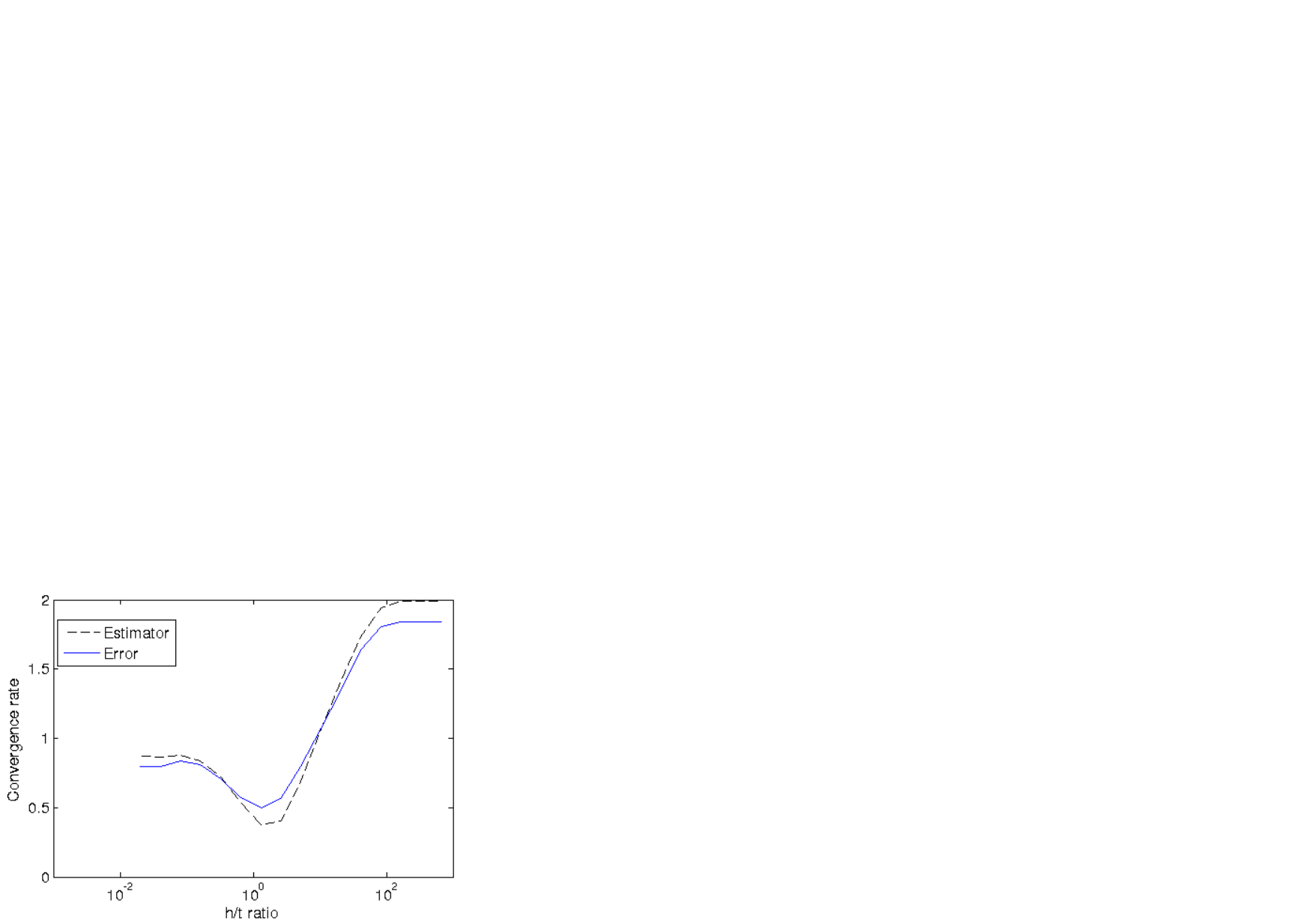}
\caption{Converge rate for different values of $t$ for the domain decomposition with 16 subdomains and $\beta = 3.1$.}
\label{fig:rates_dd31}
\end{minipage}
\end{figure}

\begin{figure}[!ht]
\begin{minipage}[b]{0.45\linewidth}
\centering
\includegraphics[scale=1.0]{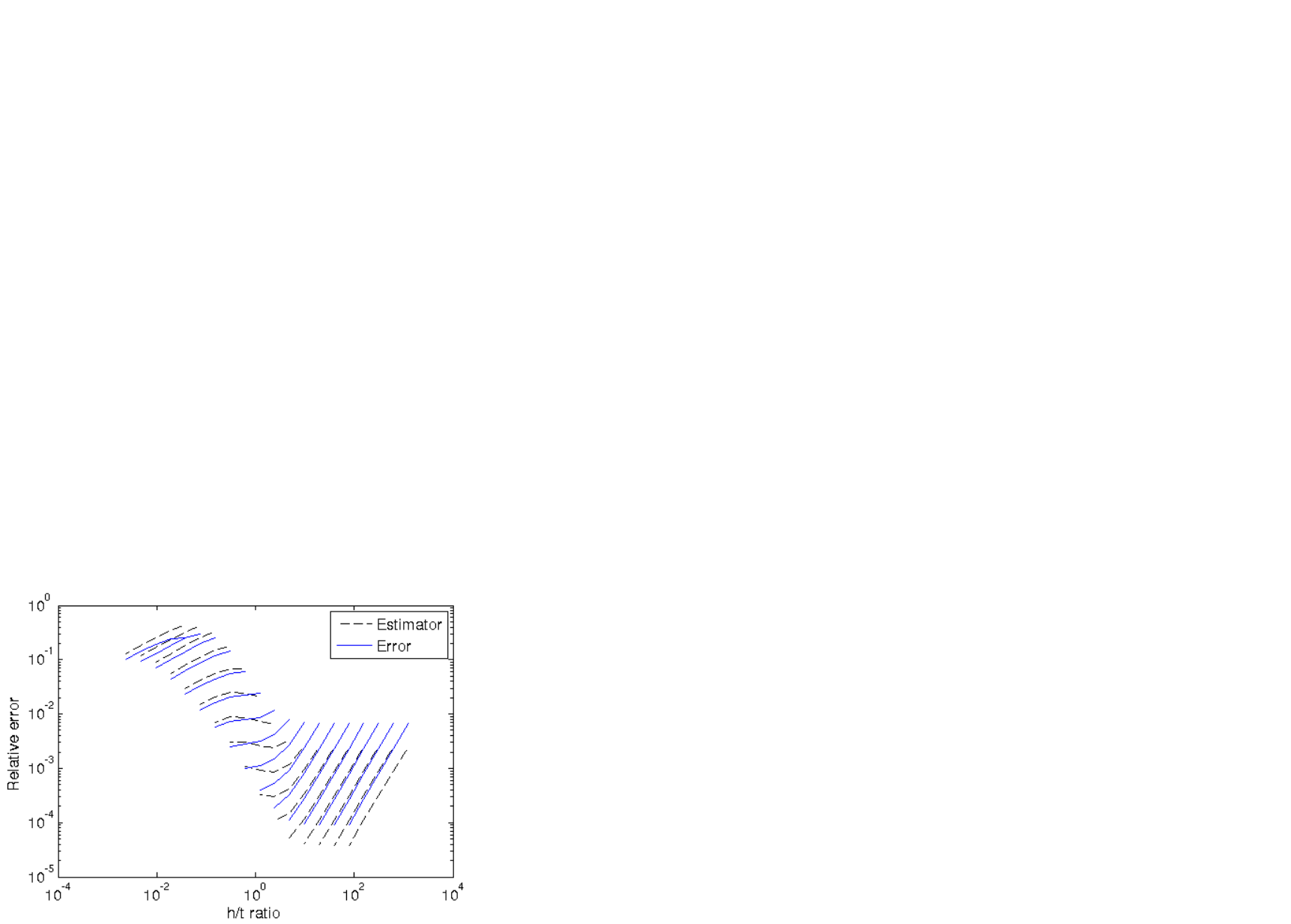}
\caption{Relative error in the mesh dependent norm for the domain decomposition with 16 subdomains and $\beta = 1.52$.}
\label{fig:thconv_dd152}
\end{minipage}
\hspace{0.5cm}
\begin{minipage}[b]{0.45\linewidth}
\centering
\includegraphics[scale=1.0]{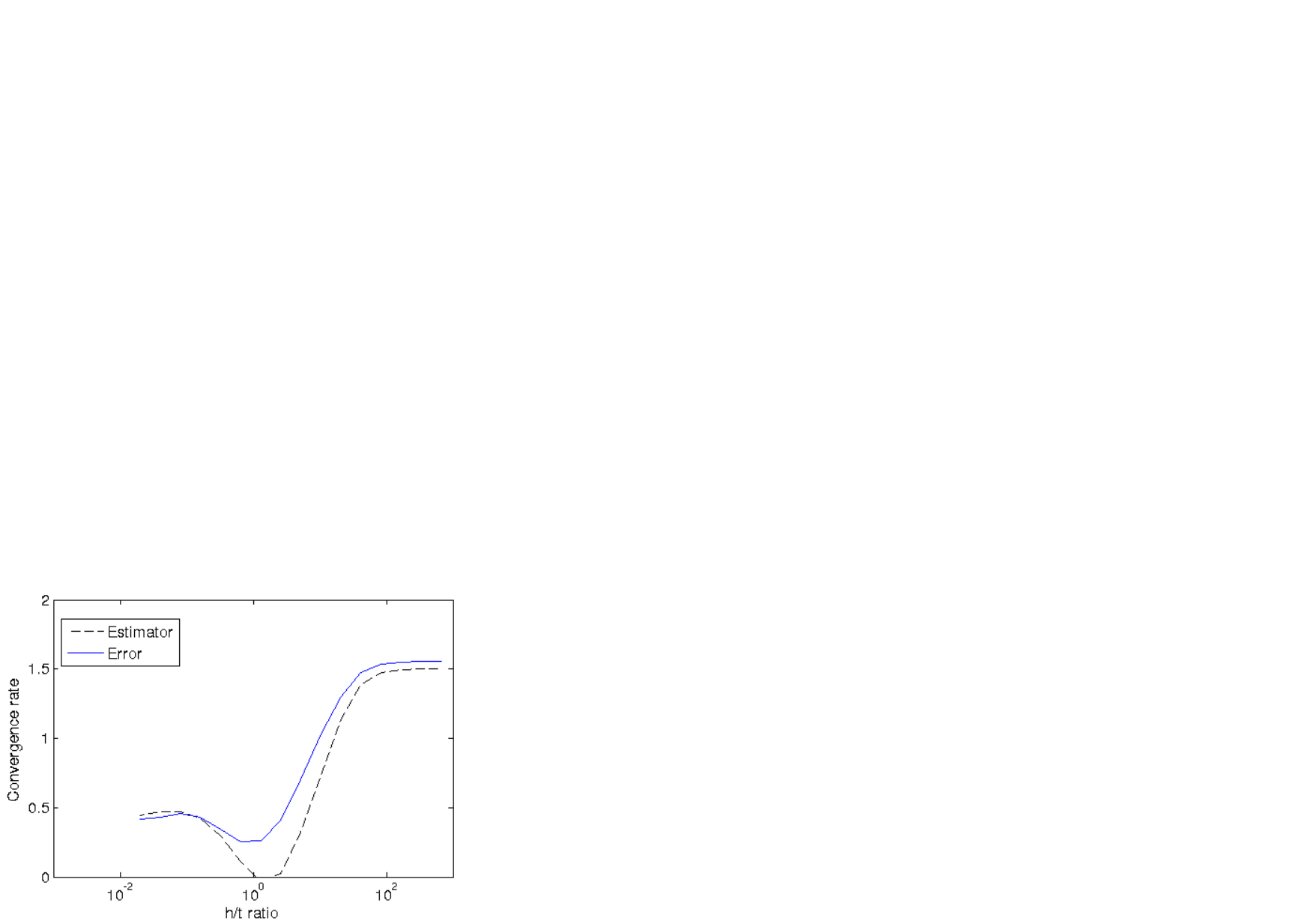}
\caption{Converge rate for different values of $t$ for the domain decomposition with 16 subdomains and $\beta = 1.52$.}
\label{fig:rates_dd152}
\end{minipage}
\end{figure}

\begin{figure}[!ht]
\centering
\hspace*{-1cm}
\includegraphics[scale=1.0]{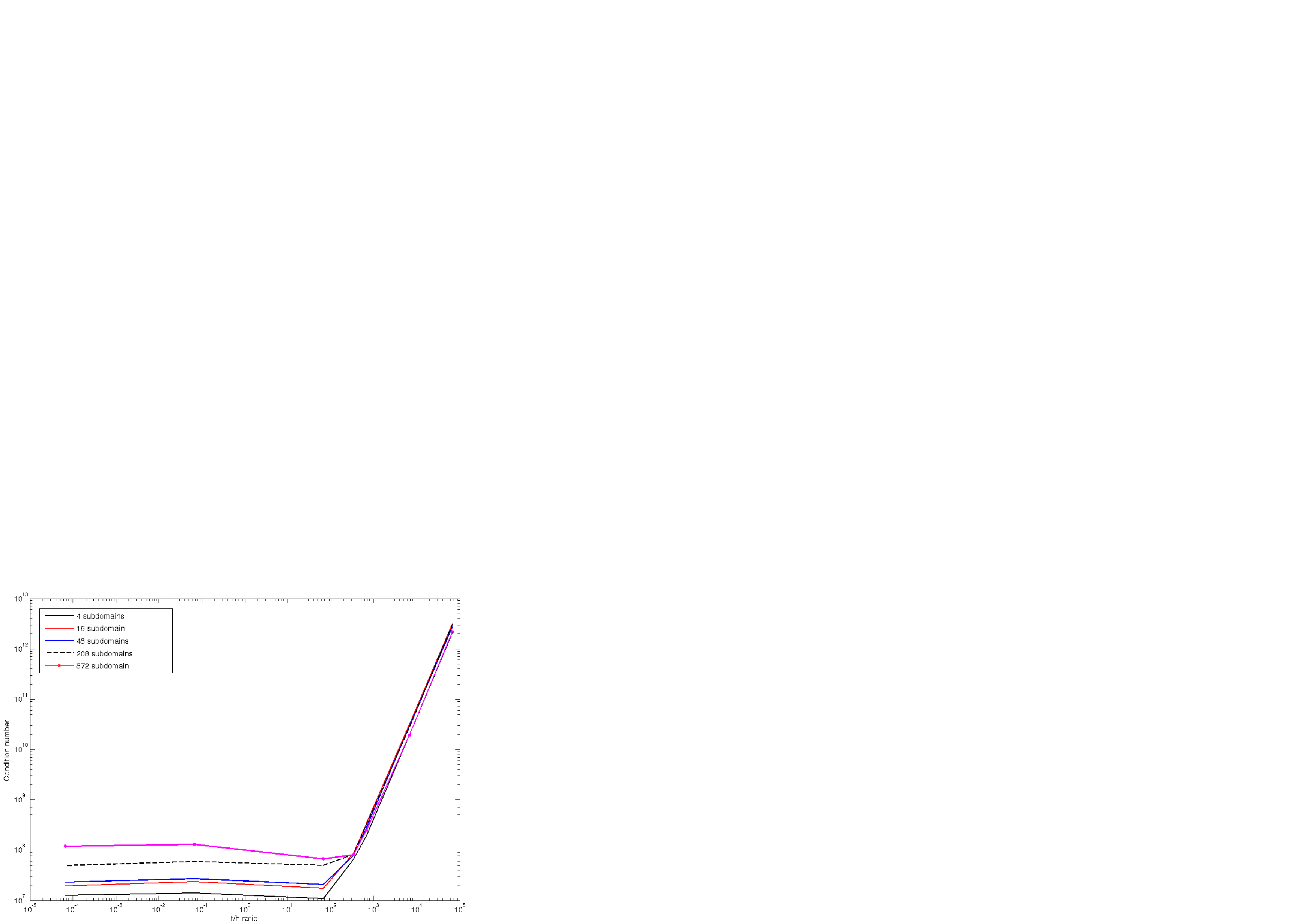}
\caption{Condition number for several subdomain divisions for different values of $t$.}
\label{fig:condtest}
\end{figure}

%
%

\begin{figure}[ht]
\begin{minipage}[b]{0.45\linewidth}
\centering
\includegraphics[scale=0.8]{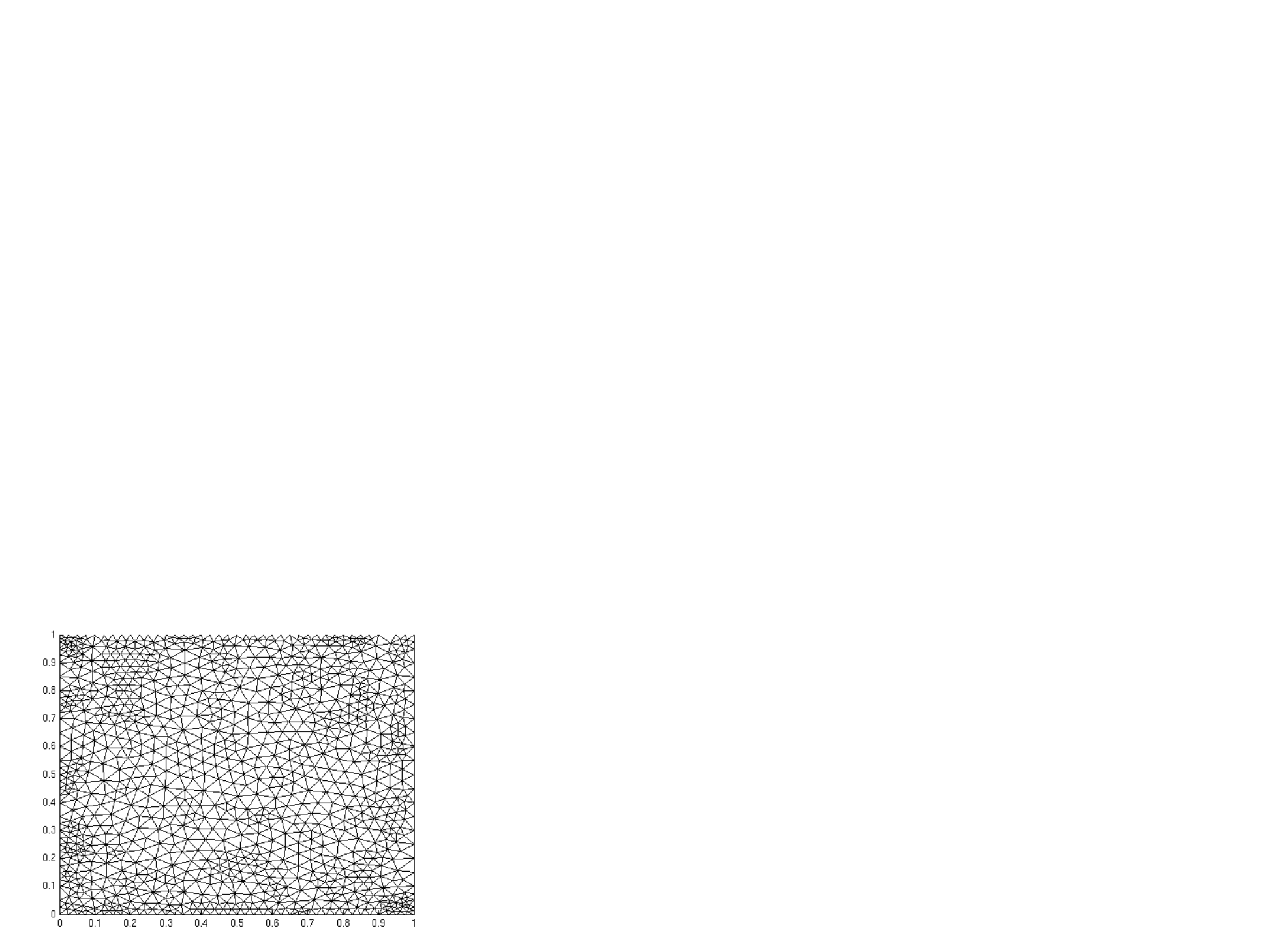}
\caption{Final mesh after adaptive refinement, $t=0.5$}
\label{fig:poismesh05}
\centering
\includegraphics[scale=0.8]{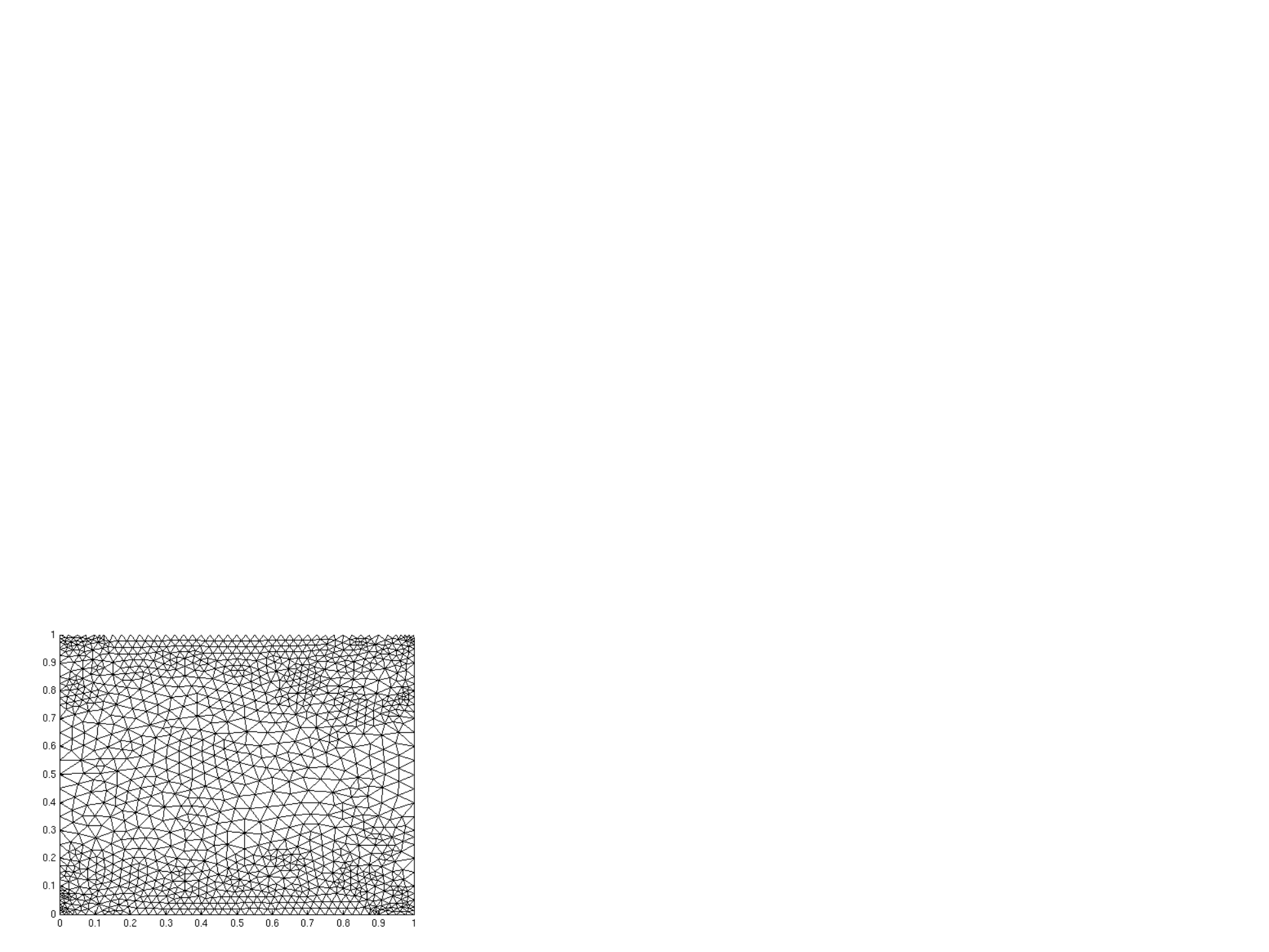}
\caption{Final mesh after adaptive refinement, $t=0.2$}
\label{fig:poismesh02}
\centering
\includegraphics[scale=0.8]{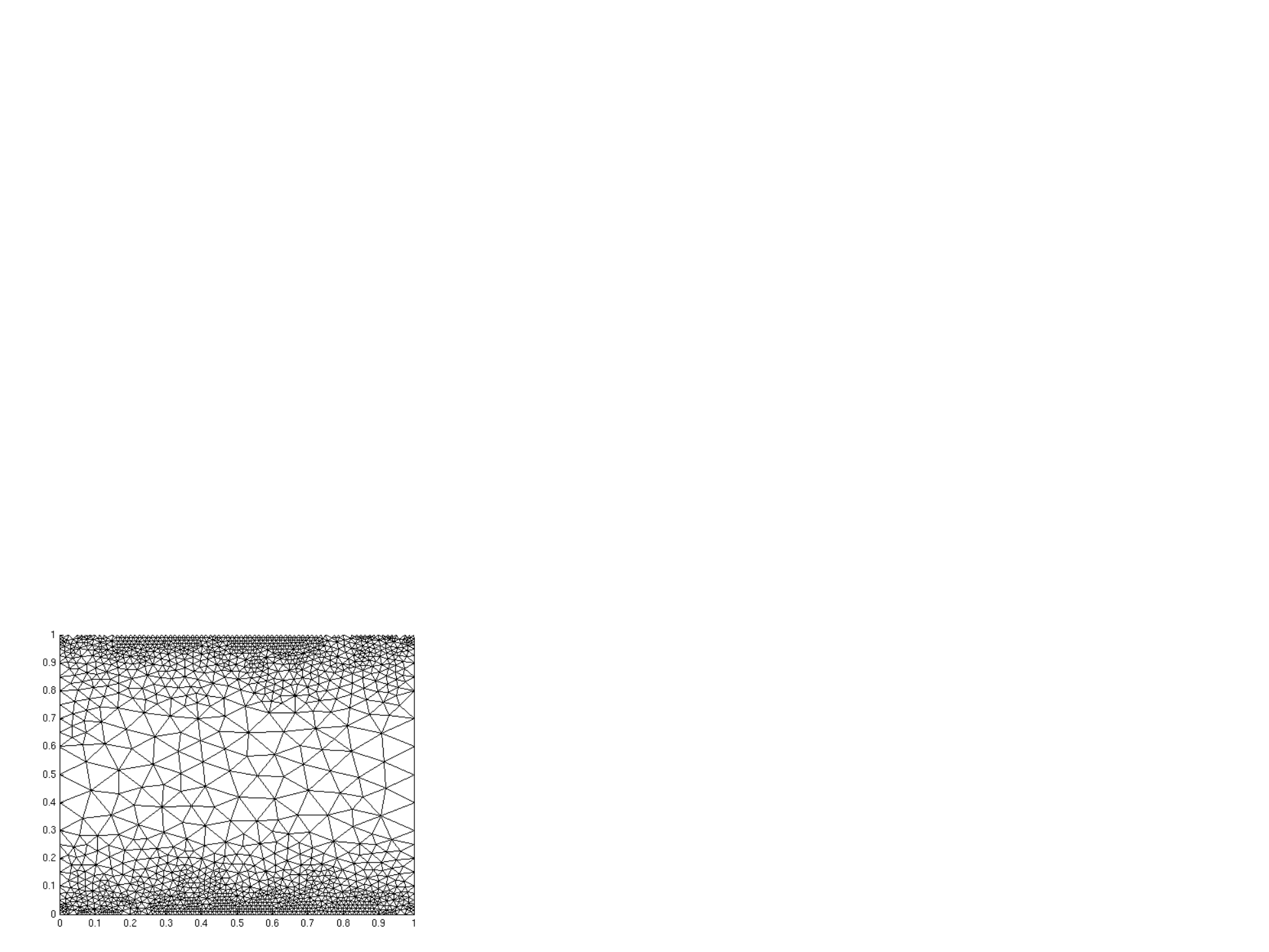}
\caption{Final mesh after adaptive refinement, $t=0.1$}
\label{fig:poismesh01}
\end{minipage}
\hspace{0.5cm}
\begin{minipage}[b]{0.45\linewidth}
\centering
\includegraphics[scale=0.8]{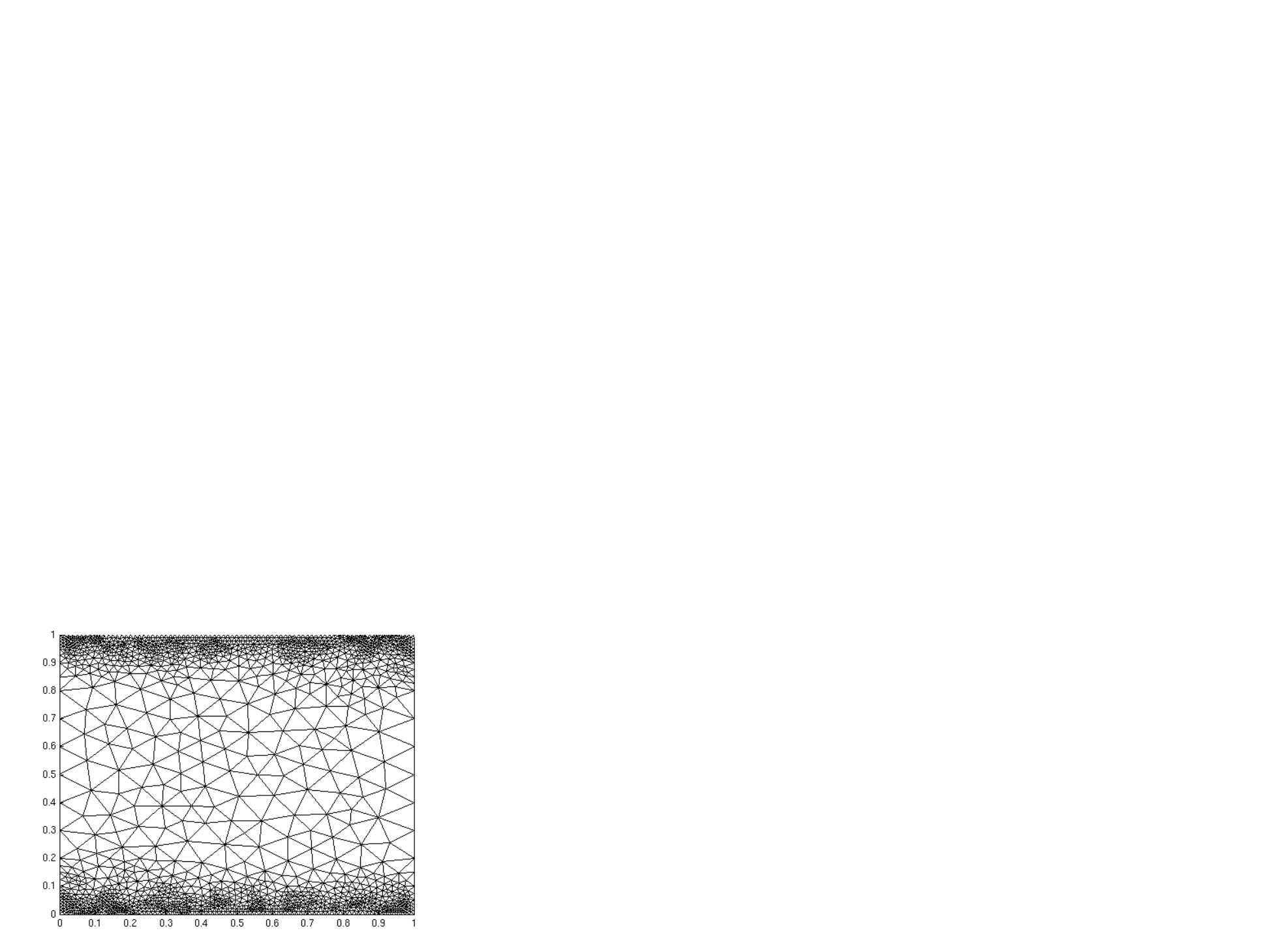}
\caption{Final mesh after adaptive refinement, $t=0.05$}
\label{fig:poismesh005}
\centering
\includegraphics[scale=0.8]{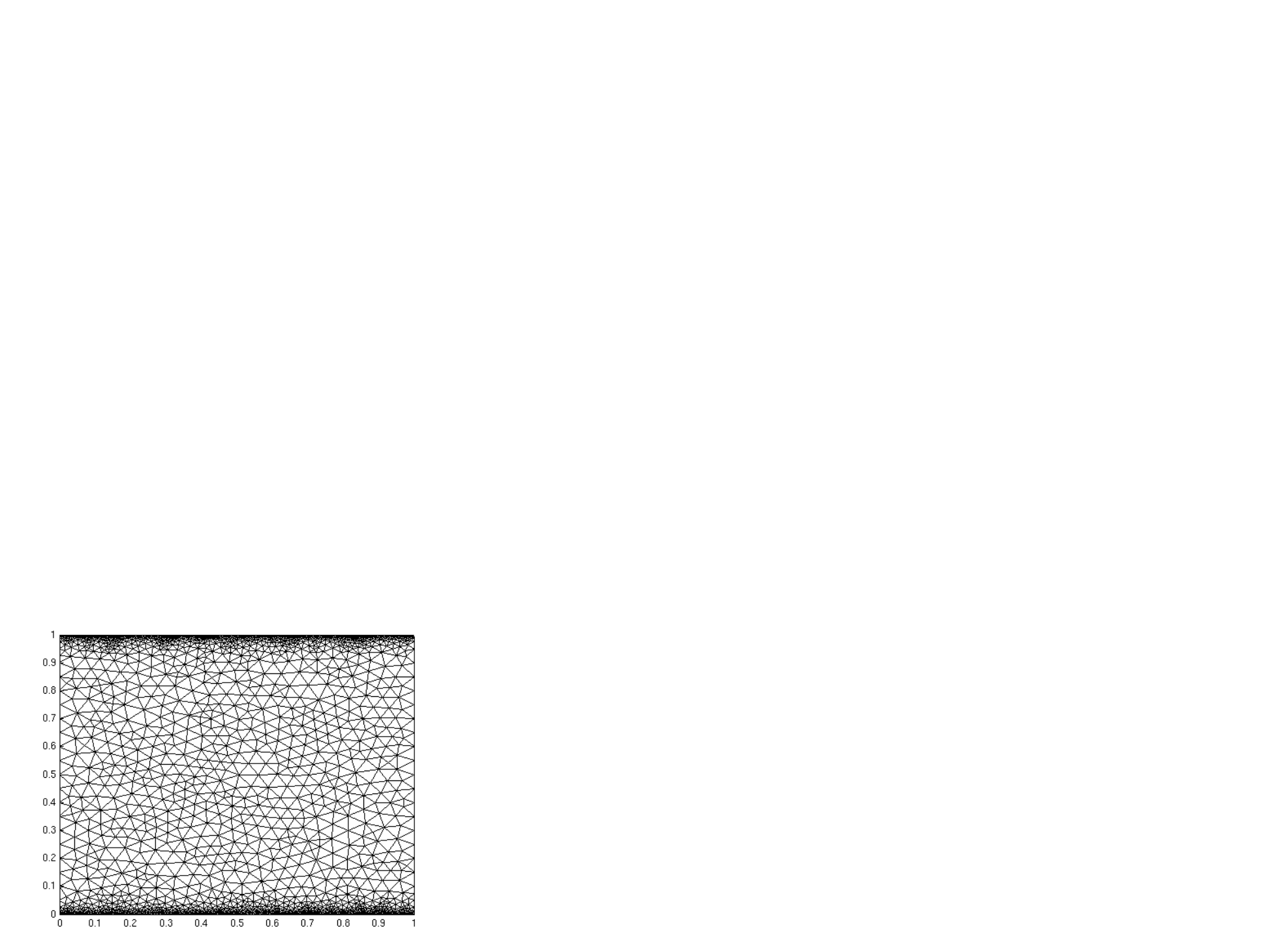}
\caption{Final mesh after adaptive refinement, $t=0.005$}
\label{fig:poismesh0005}
\centering
\includegraphics[scale=0.62]{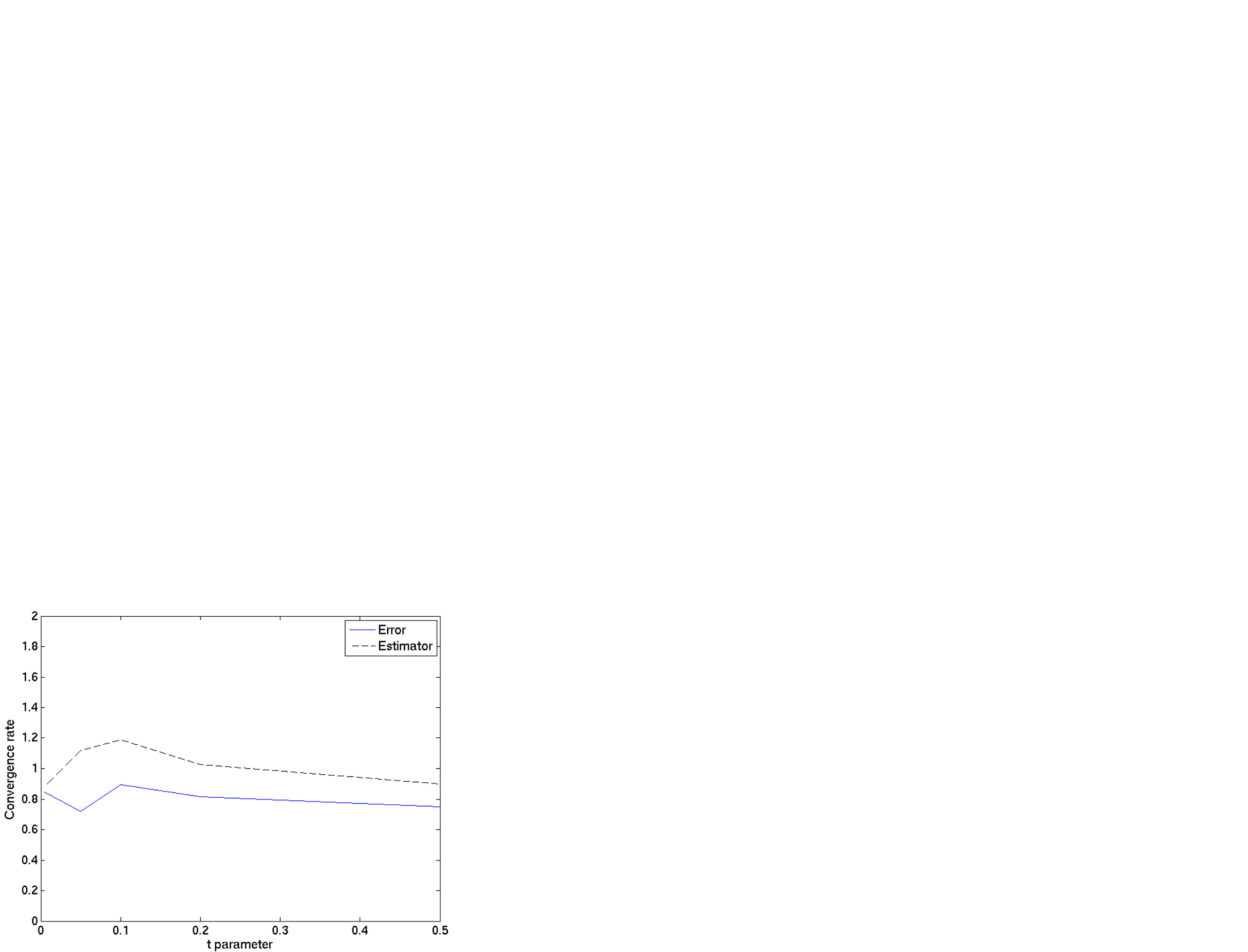}
\caption{Convergence rates of the adaptive solution for different values of $t$.}
\label{fig:rates_poisseuille}
\end{minipage}
\end{figure}

\begin{figure}[!ht]
\begin{minipage}[b]{0.45\linewidth}
\centering
\includegraphics[scale=1.0]{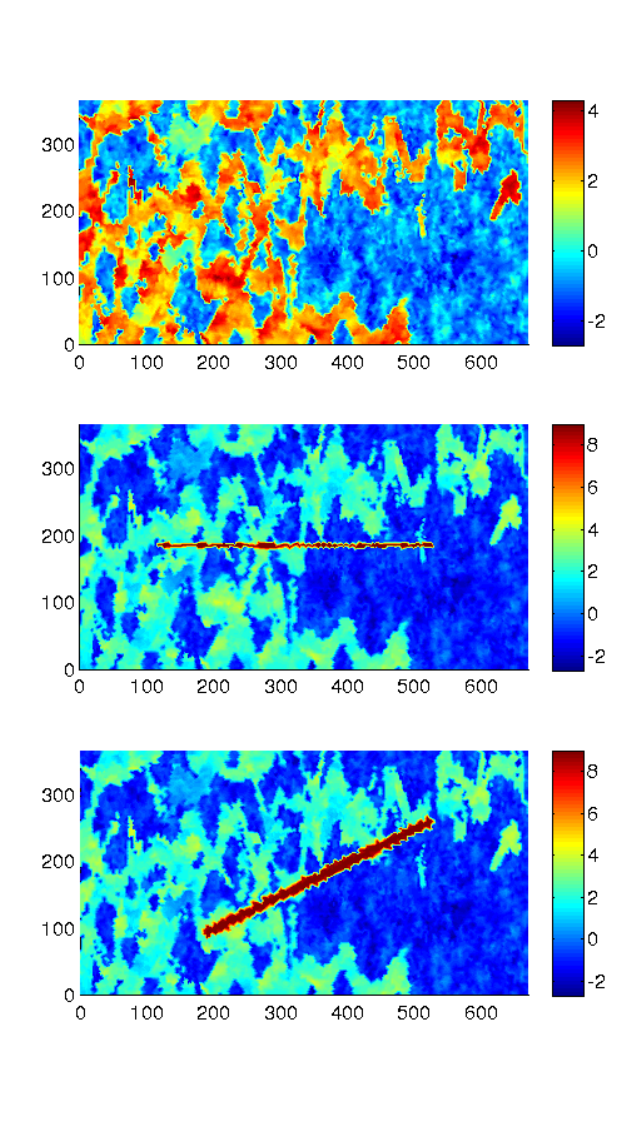}
\caption{Original and modified permeabilities on a logarithmic scale in mD for layer 68 of SPE10.}
\label{fig:perms}
\end{minipage}
\hspace{0.5cm}
\begin{minipage}[b]{0.45\linewidth}
\centering
\includegraphics[scale=1.0]{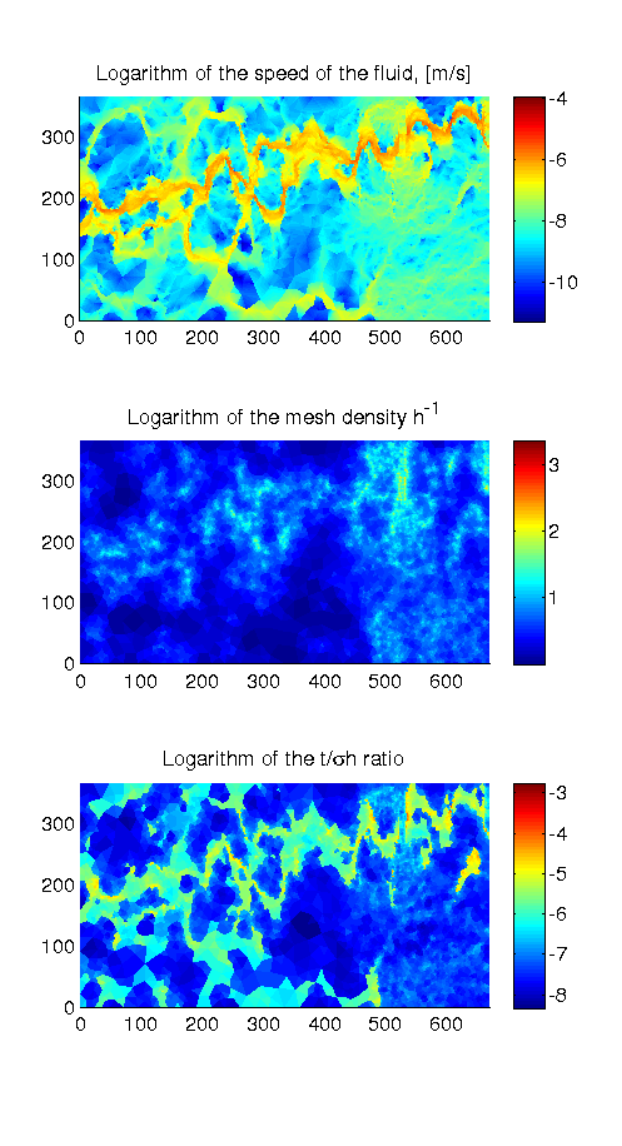}
\caption{Flow in the domain with no streak. Original data from SPE10 dataset, layer 68.}
\label{fig:flow_nostreak}
\end{minipage}
\end{figure}

\clearpage

\begin{figure}[!ht]
\begin{minipage}[b]{0.45\linewidth}
\centering
\includegraphics[scale=1.0]{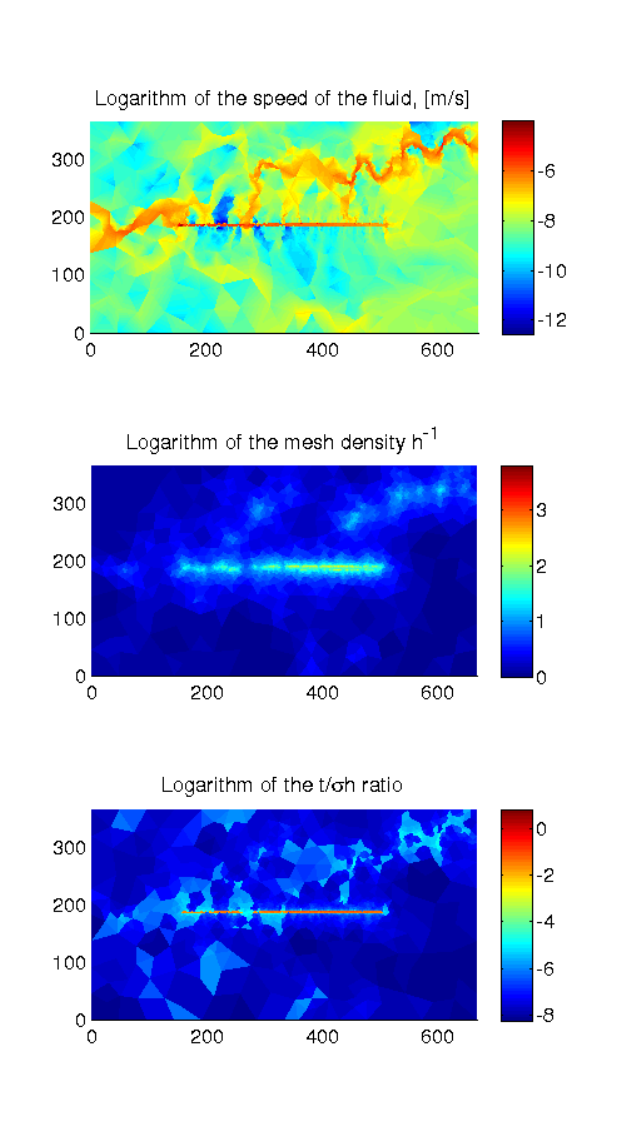}
\caption{Flow in the domain with a horizontal streak.}
\label{fig:flow_streak}
\end{minipage}
\hspace{0.5cm}
\begin{minipage}[b]{0.45\linewidth}
\centering
\includegraphics[scale=1.0]{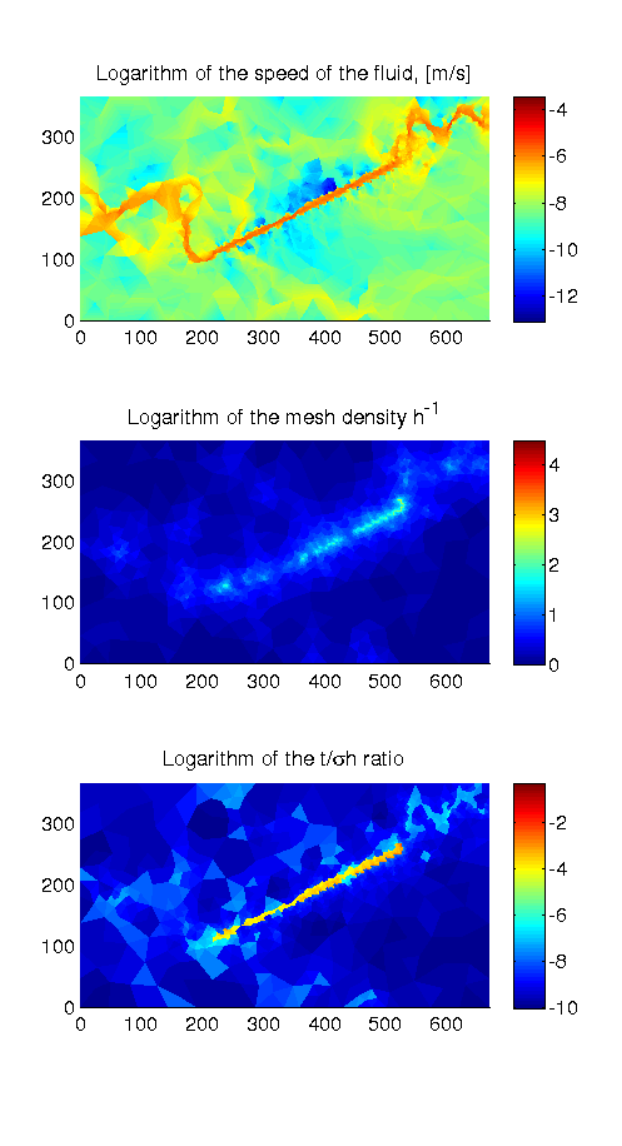}
\caption{Flow in the domain with a tilted streak.}
\label{fig:flow_tilt}
\end{minipage}
\end{figure}

\begin{figure}
\centering
\includegraphics[scale=1]{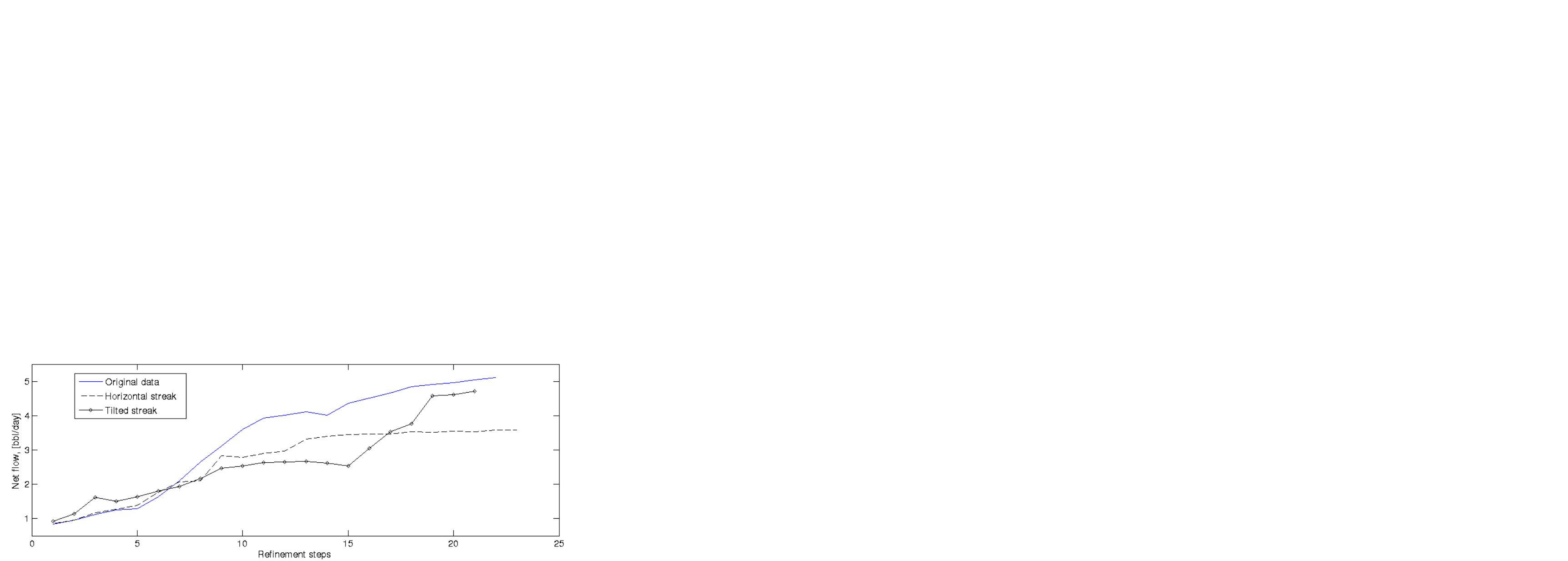}
\caption{Net flow rates for no streak or one internal streak.}
\label{fig:flowrate1}
\end{figure}

\begin{figure}[!hb]
\includegraphics[scale=1.0]{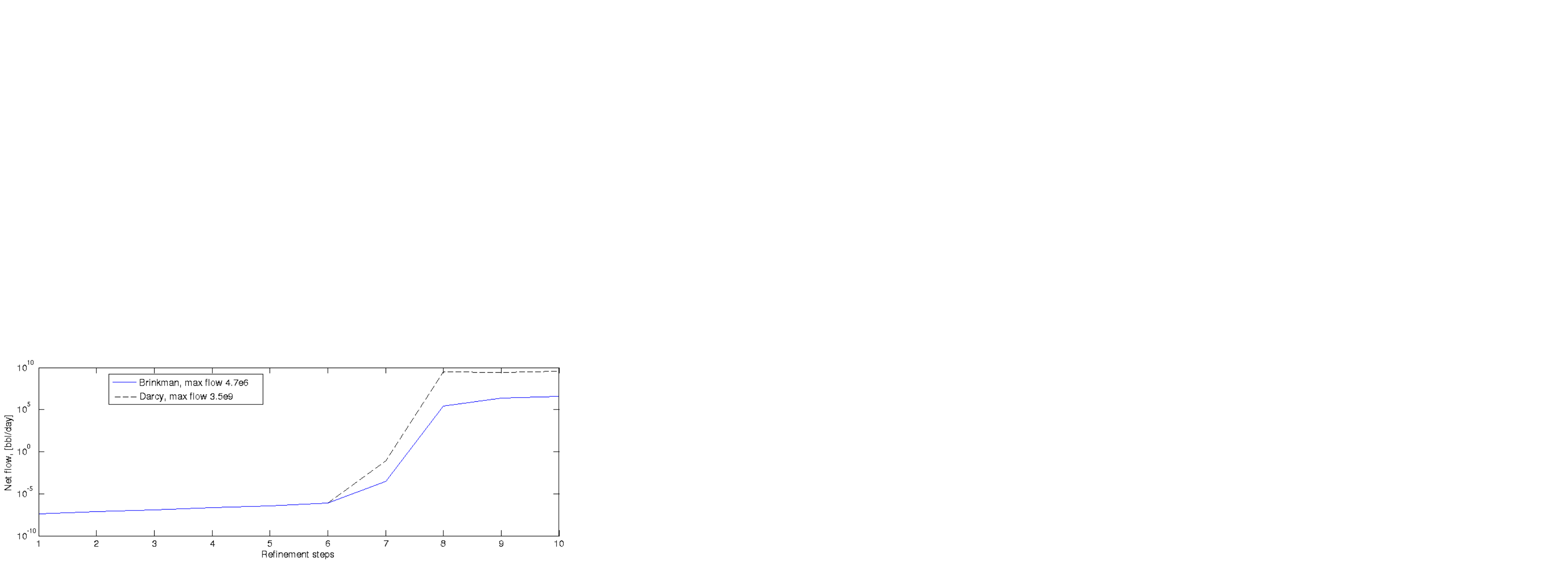}
\caption{Net flow rates for the Brinkman and Darcy models with a piercing crack with a permeability of $10^{15}$ Darcy.}
\label{fig:flowrate2}
\end{figure}

\begin{figure}[!ht]
\begin{minipage}[b]{0.45\linewidth}
\centering
\includegraphics[scale=1.0]{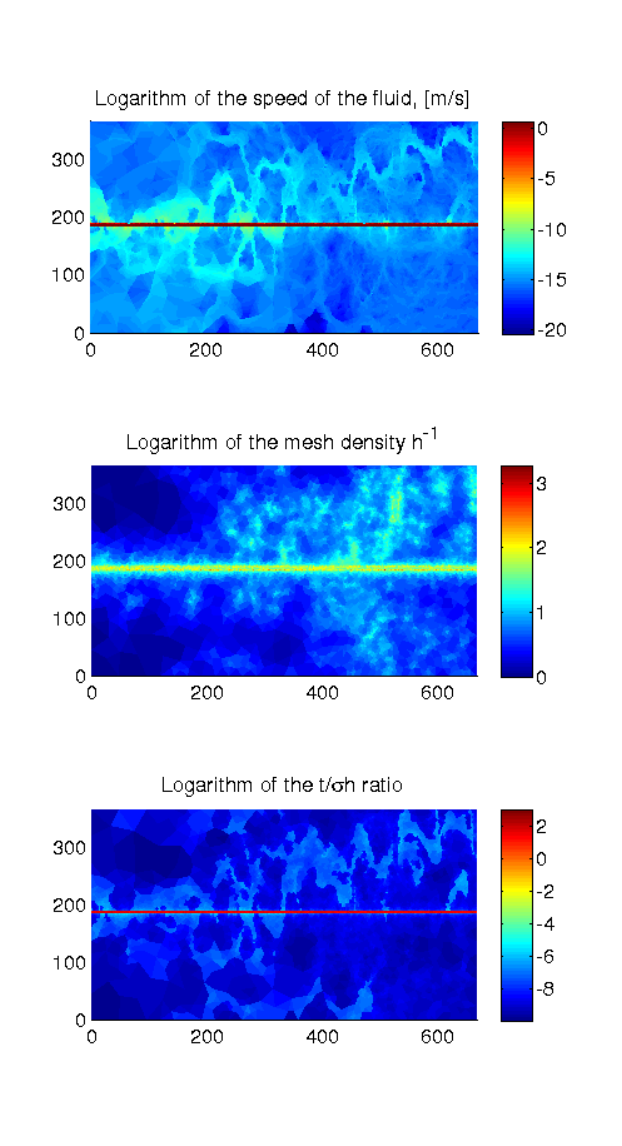}
\caption{Flow in the domain with a piercing crack, Brinkman model.}
\label{fig:piercing_bman}
\end{minipage}
\hspace{0.5cm}
\begin{minipage}[b]{0.45\linewidth}
\centering
\includegraphics[scale=1.0]{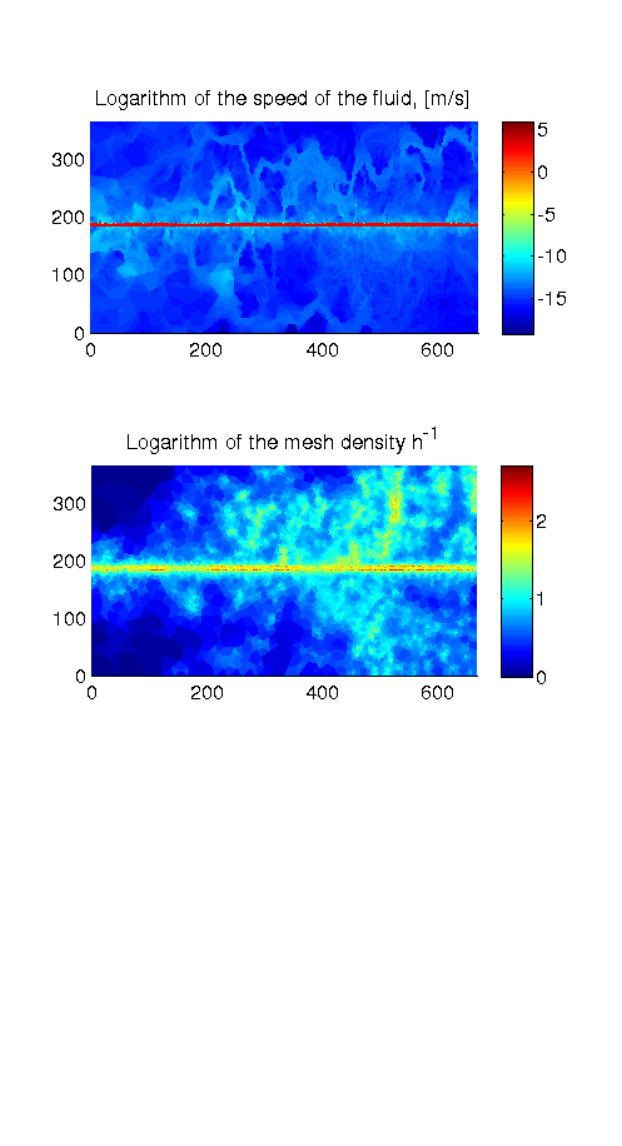}
\caption{Flow in the domain with a piercing crack, Darcy model.}
\label{fig:piercing_darcy}
\end{minipage}
\end{figure}

\clearpage

\end{document}